\documentclass[10.5pt,oneside,english]{amsart}
\RequirePackage{amsthm,amsmath,mathtools}

\usepackage[T1]{fontenc}
\usepackage{geometry}
\geometry{verbose,tmargin=4.55cm,bmargin=4cm,lmargin=3.2cm,rmargin=3.2cm}
\usepackage{subcaption}
\usepackage{babel,verbatim}
\usepackage{float}
\usepackage{amstext}
\usepackage{amssymb}
\usepackage{graphicx}
\usepackage[numbers]{natbib}
\RequirePackage[colorlinks,citecolor=blue,urlcolor=blue]{hyperref}

\makeatletter
\numberwithin{equation}{section}
\numberwithin{figure}{section}
\theoremstyle{plain}

\theoremstyle{plain}
\newtheorem{thm}{\protect\theoremname}
\theoremstyle{remark}
\newtheorem*{rem*}{\protect\remarkname}
\theoremstyle{remark}
\newtheorem{rem}[]{\protect\remarkname}
\theoremstyle{plain}
\newtheorem*{assumption*}{\protect\assumptionname}
\theoremstyle{plain}
\newtheorem{lem}[thm]{\protect\lemmaname}
\theoremstyle{plain}

\theoremstyle{plain}
\newtheorem{prop}[thm]{\protect\propositionname}
\theoremstyle{definition}
\newtheorem*{example*}{\protect\examplename}

\usepackage{algorithm,algpseudocode}
\usepackage{pgfplots} 
\usepackage{subcaption}

\linespread{1.2}
\usepackage{caption}
\makeatother
\providecommand{\algorithmname}{Algorithm}
\providecommand{\assumptionname}{Assumption}
\providecommand{\examplename}{Example}
\providecommand{\lemmaname}{Lemma}
\providecommand{\propositionname}{Proposition}
\providecommand{\remarkname}{Remark}
\providecommand{\corollaryname}{Corollary}
\providecommand{\theoremname}{Theorem}

\begin{document}
	
\title{Exact Simulation of the Extrema of Stable Processes}

\author{Jorge Gonz\'{a}lez C\'{a}zares, Aleksandar Mijatovi\'{c}, \and
	Ger\'{o}nimo Uribe Bravo}

\address{Department of Statistics, University of Warwick, \& The Alan Turing Institute, UK}

\email{jorge.gonzalez-cazares@warwick.ac.uk}

\address{Department of Statistics, University of Warwick, \& The Alan Turing Institute, UK}

\email{a.mijatovic@warwick.ac.uk}

\address{Universidad Nacional Aut\'{o}noma de M\'{e}xico, M\'{e}xico}

\email{geronimo@matem.unam.mx}
	
\begin{abstract}
We exhibit an exact simulation algorithm for the supremum of a stable
process over a finite time interval using 
dominated coupling from the past (DCFTP).
We establish a novel perpetuity equation 
for the supremum (via the representation of the concave 
majorants of L\'{e}vy processes~\citep{MR2978134})
and apply it to construct a Markov chain in the DCFTP algorithm. 
We prove that  the number of steps taken backwards in time before 
the coalescence is detected is finite. We analyse numerically the performance of 
the algorithm (the code, written in Julia 1.0, is available on GitHub).
\end{abstract}
\keywords{random variate generation; perpetuities; simulation; perfect simulation; dominated coupling from the past; stable process}

\maketitle

\section{Introduction}

This paper describes an algorithm for generating \emph{exact} samples
of the extrema of a stable process (see Algorithm~\ref{alg:Algorithm 1}
below) based on dominated coupling from the past (DCFTP), a 
coupling method for exact simulation from an invariant distribution
of a Markov chain on an ordered state space (cf.~\citep{MR1788098} and the references therein). 
The chain
in Algorithm~\ref{alg:Algorithm 1} is based on a novel characterisation
for the law of the supremum of a stable process at a fixed time in
Theorem~\ref{thm:SecondPereq}. Perpetuity~\eqref{eq:PerpEq2}
is established via the stochastic representation for concave majorants
of L\'{e}vy processes~\citep{MR2978134} and the scaling property
of stable laws (see Section~\ref{sec:Proof_Thm1} below for the proof
of Theorem~\ref{thm:SecondPereq}).

\begin{thm}
\label{thm:SecondPereq}Let $Y=(Y_t)_{t\in[0,\infty)}$ be a stable process with the stability
and positivity parameters $\alpha$ and $\rho$, respectively (see
Appendix~\ref{sec:StableSampling}). Define $\overline{Y}_{1}=\sup_{s\in[0,1]}Y_{s}$
and let $\left(B,U,V,S,\overline{Y}_{1}\right)$ be a random vector
with independent components, where $U,V$ are uniform on $(0,1)$,
$B$ is Bernoulli with parameter $1-\rho$ and $S$ has the law of
$Y_{1}$ conditioned on being positive. Then the following equality
in law holds:
\begin{eqnarray}
\overline{Y}_{1} & \overset{d}{=} &
\Lambda^{\frac{1}{\alpha}}\left(U^{\frac{1}{\alpha}}\overline{Y}_{1}+
\left(1-U\right)^{\frac{1}{\alpha}}S\right),
\label{eq:PerpEq2}
\end{eqnarray}
where $\Lambda=1+B(V^{\frac{1}{\rho}}-1)$.
Furthermore, the law of $\overline{Y}_{1}$ is the unique solution
to~\eqref{eq:PerpEq2}.
\end{thm}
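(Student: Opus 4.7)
The plan is to iterate the stick-breaking description of the concave majorant of $Y$ from~\citep{MR2978134} until the first upward face appears, identify the accumulated contraction factor as $V^{1/\rho}$, and conclude uniqueness from the classical theory of perpetuities. Concretely, that representation lists the faces $(\ell_i,\xi_i)_{i\ge 1}$ of the concave majorant of $Y$ on $[0,1]$ so that $\ell_1=V_1$, $\ell_{i+1}=(1-V_1)\cdots(1-V_i)V_{i+1}$ for i.i.d.\ $V_i\sim U(0,1)$ and, conditionally on $(\ell_i)_i$, the increments $\xi_i$ are independent with $\xi_i\stackrel{d}{=}\ell_i^{1/\alpha}Y_1^{(i)}$ by $\alpha$-self-similarity. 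Since $\overline{Y}_1=\sum_{i\ge 1}\xi_i^+$, isolating the first face and rescaling the rest yields the one-step identity
\begin{equation*}
\overline{Y}_1 \stackrel{d}{=} V_1^{1/\alpha}B_1'S_1+(1-V_1)^{1/\alpha}\overline{Y}_1',
\end{equation*}
where $B_1'\sim\mathrm{Ber}(\rho)$, $S_1\stackrel{d}{=}Y_1\mid Y_1>0$ and $\overline{Y}_1'\stackrel{d}{=}\overline{Y}_1$ are mutually independent.

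I would then iterate this identity on the recursive term whenever $B_i'=0$, stopping at the first index $N$ with $B_N'=1$; by independence $N\sim\mathrm{Geom}(\rho)$ and
\begin{equation*}
\overline{Y}_1 \stackrel{d}{=} W^{1/\alpha}\big(V_N^{1/\alpha}S+(1-V_N)^{1/\alpha}\overline{Y}_1'\big),\qquad W:=\prod_{i=1}^{N-1}(1-V_i).
\end{equation*}
Setting $B:=\mathbf{1}_{\{N\ge 2\}}\sim\mathrm{Ber}(1-\rho)$ and $U:=1-V_N$, the variables $B$, $U$, $S$, $\overline{Y}_1'$ are mutually independent with the laws stated in the theorem. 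It remains to identify $W\stackrel{d}{=}\Lambda$: on $\{B=0\}$ both equal $1$, while on $\{B=1\}$ the variable $-\log W$ is a $\mathrm{Geom}(\rho)$-sum of i.i.d.\ $\mathrm{Exp}(1)$ variables and therefore itself $\mathrm{Exp}(\rho)$-distributed, giving $W\stackrel{d}{=}V^{1/\rho}$ for a fresh uniform $V$ independent of $(B,U,S,\overline{Y}_1')$. Combining the two cases yields~\eqref{eq:PerpEq2}.

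Uniqueness I would obtain from the classical perpetuity theory applied to $X\stackrel{d}{=}MX+Q$ with $M=\Lambda^{1/\alpha}U^{1/\alpha}$ and $Q=\Lambda^{1/\alpha}(1-U)^{1/\alpha}S$: a short computation gives $\mathbb{E}[\log M]=-1/(\alpha\rho)<0$, and the power-law tail of $S$ delivers $\mathbb{E}[Q^p]<\infty$ for all $p\in(0,\alpha)$, so Vervaat's theorem yields a unique distributional solution on $[0,\infty)$. The main delicate step I anticipate is the iteration bookkeeping: one must verify that the variables $V_i,B_i',S_i$ introduced at each step remain jointly independent of the stopping time $N$, so that the geometric-sum-of-exponentials identity applies cleanly and the mixture representation of $W$ produces the full joint law of $(B,U,V,S,\overline{Y}_1)$ required by the theorem.
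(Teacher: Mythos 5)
Your proposal is correct and follows essentially the same route as the paper: the one-step identity you derive from the stick-breaking representation of the concave majorant is exactly the paper's Proposition~\ref{prop:FirstPereq}, and the paper likewise iterates it (backwards in time, via a stationary chain built from Kolmogorov consistency) until the first positive face, identifying the accumulated factor as $\Lambda$ through the same geometric-sum-of-exponentials computation and invoking \citep[Thm~2.1.3]{MR3497380} for uniqueness. The ``bookkeeping'' you flag is handled in the paper precisely by realising all iterates on one probability space so the identity holds almost surely before stopping at $\tau$; your forward-stopping version is an equivalent formulation.
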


The universality of stable processes makes them ubiquitous in probability
theory and many areas of statistics and natural and social sciences (see
the monograph~\citep{MR1745764} and the references therein).
The problem of efficient simulation of stable random variables in 
the context of statistics was addressed in~\citep{MR3233961}. 
Among the path properties, the running
supremum $\overline{Y}_{t}=\sup_{s\in[0,t]}Y_{s}\overset{d}{=}t^{1/\alpha}\overline{Y}_{1}$
of a stable process is of special interest (cf.~\citep{MR2453779,MR2440923,MR2789582,MR2402160})
as it arises in application areas such as optimal stopping, the prediction
of the ultimate supremum and risk theory (cf.~\citep{MR2932671,MR2453779}).

In general, one has no access to the density, distribution or even characteristic 
function of $\overline{Y}_1$,
making a rejection sampling algorithm (see~\citep[Sec.~II.3]{devroye:1986})
for $\overline{Y}_1$  difficult to construct. 
More precisely, if $Y$ has no positive jumps, the strong Markov property and the
fact that $Y$ does not jump over positive levels imply that $\overline{Y}_{1}$
has the same law as $Y_{1}$ conditioned on being positive~\citep{MR3033593}.
In all other cases, the law of $\overline{Y}_{1}$ is not accessible
in closed form and the information about it in the literature is obtained
via analytical methods based on the Wiener-Hopf factorisation. If
$Y$ has no negative jumps,~\citep{MR2440923} gives an alternating
series expression for the density, while~\citep{MR2789582,MR2402160}
give a double series representation for a dense class of parameters.
The coefficients in these representations are complicated and it is
not immediately clear how one could use them to design a simulation
algorithm. Moreover, in the general case, when $\alpha$ is rational the series representation
is proved to be convergent for finitely
many $\rho$ only~\citep{MR3005012}. 
Our simulation algorithm is based on purely probabilistic  
methods (it may be regarded as a generalization of 
the exact simulation algorithm for Vervaat perpetuities in~\citep{MR2587562})
and as such covers the entire class of stable processes. 

\subsection{Exact Simulation Algorithm\label{subsec:IntroTheAlgorithm}}
The perpetuity in~\eqref{eq:PerpEq2}
above gives rise to an update function 
$x \mapsto \phi(x,\Theta)$ 
of a Markov chain on $(0,\infty)$, 
where the components of the random  vector $\Theta$ are the random variables in Theorem~\ref{thm:SecondPereq} 
(see~\eqref{eq:phi_update} below for the precise definition of $\phi$).
The invariant distribution (i.e. invariant probability measure as defined in~\citep[p.~229]{MR2509253}) for  the chain 
$X'=\left\{ X_{n}'\right\} _{n\in\mathbb{Z}}$,
defined by 
$X_n'=\phi(X_{n-1}',\Theta_{n-1})$
with
$\left\{ \Theta_{n}\right\} _{n\in\mathbb{Z}}$
a sequence of independent copies of $\Theta$,
equals that of $\overline{Y}_1$. 
However, since 
$x \mapsto \phi(x,\Theta)$ 
is strictly increasing in $x$ with probability one, 
no coalescence occurs, making $X'$ unusable for DCFTP purposes. 
Fortunately, the structure of the perpetuity 
in~\eqref{eq:PerpEq2} is such that the update function $\phi$ can be modified to a 
\emph{multigamma coupler}~\citep{MR1650023} $x\mapsto \psi(x,\Theta)$,
which is constant on a subinterval in $(0,\infty)$ with positive probability and
globally non-decreasing. 
The definition of $\psi$, given in Lemma~\ref{lem:perpCoalescence} below,
was inspired by~\citep{MR2587562} where such a modification was applied to Vervaat perpetuities. 
The construction requires an addition of a single independent uniform random variable to the vector $\Theta$ 
and yields a Markov chain 
$X=\left\{ X_{n}\right\} _{n\in\mathbb{Z}}$
on $(0,\infty)$ 
via 
$X_{n}=\psi\left(X_{n-1},\Theta_{n-1}\right)$,
where
$\left\{ \Theta_{n}\right\} _{n\in\mathbb{Z}}$
are
independent copies of $\Theta$.
The invariant distribution of $X$ equals that of $\overline{Y}_1$
\textit{and} the coalescence occurs at every step with positive probability. 
The former follows from Theorem~\ref{thm:SecondPereq} and the fact that the chains 
$X$ and $X'$ have the same transition probabilities 
(see Lemma~\ref{lem:perpCoalescence} below)
and the latter is a consequence of the
structure of $\psi$. 

Our aim is to sample $X_{0}$, whose law equals that of $\overline{Y}_{1}$.
By construction of $\psi$ 
it follows that 
$\psi\left(x,\Theta\right)=\psi\left(a\left(\Theta\right),\Theta\right)$
for any $x\in(0, a\left(\Theta\right)]$, 
where $\theta\mapsto a\left(\theta\right)$ is a positive deterministic function
explicitly given in~\eqref{eq:varphi_and_a} of 
Lemma~\ref{lem:perpCoalescence} below.
The coalescence for $X$ occurs every time the inequality $X_n\leq a\left(\Theta_n\right)$ is satisfied,
since,
if $-\sigma$ is such a time, then $X_{-\sigma+1}=\psi\left(a\left(\Theta_{-\sigma}\right),\Theta_{-\sigma}\right)$
disregards the value
$X_{-\sigma}$
and hence the entire trajectory of $X$ prior to time $-\sigma+1$. 

The task now is to detect whether the event 
$\left\{X_n\leq a\left(\Theta_n\right)\right\}$ occurred without knowing the value of $X_n$
(if we had access to $X_n$ for any $n\in\mathbb{Z}$, we would have a sample from the law of $\overline{Y}_1$!).
DCFTP~\citep{MR1788098} suggests to look for 
a process $D=\left\{ D_{n}\right\}_{n\in\mathbb{Z}}$ satisfying $D_{n}\geq X_{n}$
for all $n\in\mathbb{Z}$, which can be simulated backwards in time (starting at $0$) together with the 
i.i.d. sequence  $\left\{ \Theta_{n}\right\} _{n\in\mathbb{Z}}$. 
It is possible to define such a process $D$, which turns out to be stationary but non-Markovian, by ``unwinding''
the recursion for $X$ backwards in time and bounding the terms 
(see~\eqref{eq:DominatingProcess} in Sec.~\ref{sec:SimulationDifficultProcesses}). 

\begin{algorithm}
\caption{Exact sampling from the law of $\overline{Y}_1$}
\label{alg:Algorithm 1} 
\begin{algorithmic}[1]
	\State{Starting at $0$, sample $\{(D_n,\Theta_n)\}_{n\in\mathbb{Z}}$ backwards in time until 
	$-\sigma=\sup\{n\leq 0:D_n\leq a(\Theta_n)\}$}
	\State{Put $X_{-\sigma+1}=\psi(a(\Theta_{-\sigma}),\Theta_{-\sigma})$}
	\State{Compute recursively $X_n=\psi(X_{n-1},\Theta_{n-1})$ for $n=-\sigma+2,\ldots,0$}
	\State{\Return $X_0$}
\end{algorithmic}
\end{algorithm}

The backward simulation of $\left\{ \left(D_{n},\Theta_{n}\right)\right\}_{n\in\mathbb{Z}}$ 
in step $1$ of Algorithm~\ref{alg:Algorithm 1} is discussed in Section~\ref{sec:Backward_Sim} below.
It relies on two ingredients: (A) the simulation of the indicators of independent events with summable probabilities 
and (B) the simulation of a random walk with negative drift \textit{and} its future supremum. 
By the Borel-Cantelli lemma, only finitely many indicators in (A) are non-zero. A simple and efficient algorithm for the
simulation of the entire sequence is given in Section~\ref{subsec:StableInc} below. The algorithm for (B)
has been developed in~\citep[Sec.~4]{MR2865624}. For completeness, in Section~\ref{subsec:BlanchetSigman} below 
we present the algorithm from~\citep[Sec.~4]{MR2865624} applied to the specific random walk that arises in 
definition~\eqref{eq:DominatingProcess} of our dominating process $D$.
The algorithm in~\citep[Sec.~4]{MR2865624}  
requires the simulation of the walk under the original measure as well as under an exponential change of measure. 
In our case the increments of the random walk in question are shifted negative exponential random variables. 
This makes the dynamics of the walk explicit and easy to simulate under both measures (see Section~\ref{subsec:BlanchetSigman} below for details), 
making the implementation of Algorithm~\ref{alg:Algorithm 1} quite
fast. More precisely, Algorithm~\ref{alg:FullAlgorithm} below (a version of Algorithm~\ref{alg:Algorithm 1}) 
was implemented in Julia, 
see the GitHub repository~\cite{Jorge_GitHub} for the code and a simple user guide. 
This implementation outputs approximately $10^4$ samples every $1.15$ seconds 
(see Section~\ref{sec:numerics} for details).

Note that the random time $\sigma$ in Algorithm~\ref{alg:Algorithm 1} dictates the number 
of simulations, as steps 2-4 in the algorithm require only deterministic
computation. In order to prove that $\sigma$ is finite, 
we couple $D$ with a dominating process $D'$, which is a component of a multi-dimensional
positive Harris recurrent Markov chain $\Xi$
(see~\eqref{eq:DoubleDominatingProcess} for the definition of $D'$ and Lemma~\ref{lem:recurrenceDomination}
of Section~\ref{sec:SimulationDifficultProcesses} below). Note that we need not  
be (and in fact are not) able to simulate
$D'$. 
We apply the general state space Markov chain theory~\citep{MR2509253,MR758799} to prove 
the following result 
(see Section~\ref{proof:FiniteExpectedTerminationTime} below for details).

\begin{thm}
\label{thm:FiniteExpectedTerminationTime} The random time $\sigma$ in Algorithm~\ref{alg:Algorithm 1}
is finite a.s. Moreover, $\mathbb{E} [\sigma|\Xi_0]<\infty$ a.s. 
\end{thm}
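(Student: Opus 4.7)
The plan is to reduce the statement about $\sigma$ to a recurrence property of the positive Harris recurrent chain $\Xi$ from Lemma~\ref{lem:recurrenceDomination}. First, I would use the pointwise bound $D_n\leq D'_n$ (also part of that lemma) to obtain the inclusion $\{D'_n\leq a(\Theta_n)\}\subseteq\{D_n\leq a(\Theta_n)\}$. Hence the backward stopping time $\sigma':=-\sup\{n\leq 0:D'_n\leq a(\Theta_n)\}$ satisfies $\sigma\leq \sigma'$ pointwise, and it suffices to prove $\sigma'<\infty$ a.s.\ and $\mathbb{E}[\sigma'\mid\Xi_0]<\infty$ a.s.

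Second, since $D'_n$ is a measurable function of $\Xi_n$ and $\Theta_n$ is the innovation driving the one-step transition of $\Xi$, the event $\{D'_n\leq a(\Theta_n)\}$ is measurable with respect to $(\Xi_n,\Theta_n)$ and can be absorbed into an enlarged positive Harris recurrent chain with the same recurrence properties; with a mild abuse of notation I still call it $\Xi$ and write $\{\Xi_n\in A\}$ for the relevant measurable set $A$ in its state space. By stationarity of $\Xi$ under its invariant probability $\pi$, the backward first hitting time of $A$ has the same law as the forward one, so the whole problem reduces to proving $\pi(A)>0$. I would derive this from two ingredients: the multigamma structure of $\psi$ (Lemma~\ref{lem:perpCoalescence}) gives $\mathbb{P}(a(\Theta)\geq c)>0$ for all sufficiently small $c>0$, and the positive recurrence of $\Xi$ combined with the negative drift built into the construction of $D'$ in~\eqref{eq:DoubleDominatingProcess} makes $D'$ tight under $\pi$, so $\pi(D'\leq c)>0$ for some such $c$. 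Independence of $\Theta_n$ from the past then yields $\pi(A)\geq\pi(D'\leq c)\,\mathbb{P}(a(\Theta)\geq c)>0$.

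Third, once $\pi(A)>0$ is established, positive Harris recurrence implies that $\Xi$ visits $A$ infinitely often a.s., giving $\sigma'<\infty$ a.s. Kac's theorem yields $\mathbb{E}_\pi[\tau_A]=1/\pi(A)<\infty$ for the first return time $\tau_A$, and the Meyn--Tweedie drift-and-minorization framework (see~\citep[Ch.~10--14]{MR2509253} and~\citep{MR758799}) upgrades this to $\mathbb{E}_\xi[\tau_A]<\infty$ for every starting state $\xi$ via the same Foster--Lyapunov function used to establish positive Harris recurrence of $\Xi$, which gives $\mathbb{E}[\sigma'\mid\Xi_0]<\infty$ a.s. The main obstacle is verifying $\pi(A)>0$ without an explicit formula for $\pi$; the cleanest route is to exhibit a small set $C$ for $\Xi$ whose associated minorization measure charges $A$, and this requires a careful analysis of one-step transitions of $\Xi$ using the explicit representation of $D'$ in~\eqref{eq:DoubleDominatingProcess}.
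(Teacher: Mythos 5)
Your reduction $\sigma\leq\sigma'$ and the argument that $\pi(A)>0$ (using the independence of $\Theta_0$ from $(R_0,D_0')$ and the unbounded support of $a(\Theta)$) match the paper, and for the almost sure finiteness of $\sigma$ your stationarity argument is sound: under $\pi$ the backward hitting time of $A$ has the same \emph{unconditional} law as the forward one, so Harris recurrence of $\Xi$ plus $\pi(A)>0$ suffices. The gap is in the conditional moment bound. Conditionally on $\Xi_0=\xi$, the past $(\Xi_{-1},\Xi_{-2},\dots)$ evolves according to the \emph{time-reversed} (dual) kernel, not the forward one, so $\mathbb{E}[\sigma'\mid\Xi_0=\xi]$ is an expected hitting time for the reversed chain started at $\xi$. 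Establishing $\mathbb{E}_\xi[\tau_A]<\infty$ for the forward chain therefore says nothing directly about $\mathbb{E}[\sigma'\mid\Xi_0]$; the two kernels are genuinely different here (the paper stresses that the law of the time-reversal of $\Xi$ with respect to $\pi$ is very different from that of $\Xi$). The missing ingredient is precisely the paper's key step: invoke~\citep[Thm~8.1.1]{MR758799} to obtain a Harris recurrent modification of the dual chain $\Psi_n=\Xi_{-n}$, and then apply~\citep[Thm~11.1.4]{MR2509253} \emph{to the dual chain} to get $\mathbb{E}[\sigma'\mid\Psi_0]<\infty$ a.s.

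Two secondary problems. First, you cannot rescue the argument by taking unconditional expectations and using the tower property: that would require $\mathbb{E}_\pi[\sigma']=\mathbb{E}_\pi[\tau_A]<\infty$ for the \emph{stationary forward hitting} time, but Kac's theorem only gives $\mathbb{E}_{\pi_A}[\tau_A]=1/\pi(A)$ for the return time started in $A$; by the inspection paradox, finiteness of $\mathbb{E}_\pi[\tau_A]$ is equivalent to a \emph{second} moment condition on the return time, which positive recurrence does not supply. Second, your plan to reuse ``the same Foster--Lyapunov function used to establish positive Harris recurrence of $\Xi$'' is not executable: no drift function is ever constructed, and none is needed — the paper obtains recurrence of $\Xi$ for free from $\varphi$-irreducibility together with the existence of the invariant probability measure (\citep[Prop.~10.1.1 and Thm~9.0.1]{MR2509253}), and constructing an explicit Lyapunov function for $\Xi$ (let alone for its time-reversal) given the heavy-tailed, non-explicit structure of $D'$ in~\eqref{eq:DoubleDominatingProcess} is exactly the difficulty the duality argument is designed to avoid.
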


In~\citep[Thm~5.1]{MR2587562} the authors provide a sharp estimate on $\mathbb{E} [\sigma]$ for an analogous algorithm 
in the context of Vervaat perpetuities.
Their analysis is based on the fact that their dominating process $D$ is a birth-death Markov chain and is hence
time-reversible  with skip-free increments and an explicit
invariant distribution (shifted geometric). 
In the context of Theorem~\ref{thm:FiniteExpectedTerminationTime}, 
the dominating process $D$ is non-Markovian, 
its increments are diffuse, have heavy tails and the multi-dimensional Markov chain 
$\Xi$ used to bound $D$ has a non-explicit invariant probability measure $\pi$ 
(which also has heavy tails). These heavy tails make the chain frequently take 
large values, which in turn makes the coalescence events and probabilities harder 
to trace, bound and control. Moreover, the law of the time-reversal of $\Xi$ 
(with respect to $\pi$) is very different from that of $\Xi$. The key step 
in the proof of Theorem~\ref{thm:FiniteExpectedTerminationTime} is provided 
by~\citep[Thm~8.1.1]{MR758799}, which allows us to conclude that the time-reversed chain has a Harris recurrent modification. However, 
a quantitative bound on the expected number of steps taken backwards in 
time in Algorithm~\ref{alg:Algorithm 1} remains an open problem. 

\subsection{Related literature}
Exact simulation algorithms for various instances of a general perpetuity equation $\mathcal{X}\overset{d}{=}A_0 \mathcal{X}+A_1$ (with $(A_0,A_1)$ and 
$\mathcal{X}$ independent) have been developed in the literature.

Paper~\citep{MR2587562} studies the case $A_0=A_1\geq0$, $\mathbb{E} [A_0]<1$,
specialising to the Vervaat perpetuity for $A_0=U^{1/\beta}$ with $U$ uniform on 
$(0,1)$ and $\beta\in(0,\infty)$, see also~\cite{MR3683256,MR1833738}. 
Briefly put,~\citep{MR2587562} first identifies the update function and 
constructs a multigamma coupler. The identified dominating process is 
a simple random walk with a partially absorbing barrier and whose invariant law 
is that of a shifted geometric random variable. A sped up version of a 
DCFTP algorithm~\cite{MR1833738}  in the case $\beta=1$  (i.e. when 
$\mathcal{X}$ follows the Dickman distribution) is given in~\citep{MR2575452}.

In~\citep{Devroye:2011:DCM:1899396.1899398}, the authors develop the double 
CFTP algorithm in the case $A_0=V$ and $A_1=(1-V)Z$, where $V$ takes values 
in $[0,1]$ (and has a computable density) and $Z$ is independent of $V$ with 
support in an interval $[0,c]$ for some $c<\infty$.
This structure appears similar to perpetuity~\eqref{eq:PerpEq1} of 
Propostion~\ref{prop:FirstPereq} below, where $A_0=U^{1/\alpha}$ and 
$A_1=(1-U)^{1/\alpha}\max\{Y_1,0\}$ with $Y_1$ an $\alpha$-stable random 
variable independent of the uniform $U$. 
Proposition~\ref{prop:FirstPereq} provides a key step in the proof of 
Theorem~\ref{thm:SecondPereq} above, which in turn is the cornerstone of 
Algorithm~\ref{alg:Algorithm 1}. The upper bound $c$ on the support of 
$Z$ in~\citep{Devroye:2011:DCM:1899396.1899398} is inversely proportional 
to the coalescence probability of the chain in the double CFTP algorithm, 
making its direct application to perpetuity~\eqref{eq:PerpEq1} impossible,
since $\max\{Y_1,0\}$ not only has infinite support but also a heavy tail.
Moreover, even if we could construct a stochastic (rather than constant) 
upper bound on the relevant support, this bound would necessarily still 
have a heavy tail making the coalescence in a generalisation of the 
algorithm in~\citep{Devroye:2011:DCM:1899396.1899398} unlikely. This would 
then yield long (possibly infinite) running times for such a generalisation.

Paper~\citep{ExactGralVervaatPerp} studies the generalised Vervaat perpetuity where 
$A_1=A_0A_2$ for independent $A_2$ and $A_0=U^{1/\beta}$ with $U$ uniform on 
$(0,1)$. By calculating the Laplace transform from the perpetuity, it is shown 
in~\citep{ExactGralVervaatPerp} that 
$\mathcal{X}$ has the law of the marginal of a pure jump L\'{e}vy process 
at time $\beta$ with L\'{e}vy density 
$\nu(dx)= |x|^{-1}(\mathbb{P}(A_2>x)1_{x>0}+\mathbb{P}(A_2<-x)1_{x<0})dx$. 
Techniques similar to those in~\citep{MR3024612}, based on infinite divisibility, 
are used to devise the simulation algorithm under the conditions $A_2\geq0$ and 
$\lim_{x\downarrow0}\mathbb{P}(A_2\leq x)/x<\infty$, 
without relying on Markov chain techniques. The calculation of Laplace
transforms based on perpetuities~\eqref{eq:PerpEq1} or~\eqref{eq:PerpEq2},
yields complicated equations for the Laplace transform. Furthermore,
even if we could solve for the Laplace transform of $\overline{Y}_1$,
we could not follow the simulation approach from \citep{ExactGralVervaatPerp}
as $\overline{Y}_1$ is typically not infinitely divisible.

In~\citep{MR2865624} the authors use a version of a 
multigamma coupler, allowing 
$A_1$ to have a heavy tail but assuming the independence of $A_0$ and $A_1$, 
a requirement clearly violated by perpetuities~\eqref{eq:PerpEq2} 
and~\eqref{eq:PerpEq1} in the present paper. Moreover, a certain 
domination condition~\citep[Eq.~(2) in Assumption~(B)]{MR2865624} 
for the density of $A_1$ is stipulated, which plays an important 
role in constructing the coalescence probability. This dominating condition is 
hard to establish for the density of a stable law conditioned on 
being positive, appearing in perpetuity~\eqref{eq:PerpEq2}. 
Thus, even if one could remove the assumption on the independence 
of $A_0$ and $A_1$ in~\citep{MR2865624}, this technical requirement would make it hard 
to apply directly the sampling algorithm from~\citep{MR2865624} in our setting.

The structure of the multigamma coupler used in the present paper
is closer to the one in~\citep{MR2587562} (see 
also Section~\ref{subsec:IntroTheAlgorithm} above and 
Lemma~\ref{lem:perpCoalescence} below) than the one in~\citep{MR2865624}. 
Despite the differences between the samplers in~\citep{MR2865624} and the one used here, the
construction of our dominating process was inspired by the one presented
in~\citep{MR2865624}. However, we were unable to use directly the dominating process
$V^+_k$ in~\citep[Eq.~(9)]{MR2865624}, 
which appears to be bounded from below by the deterministic function 
$k\mapsto e^{ak/2}/(1-e^{-a/2})$ (for all positive integers $k$ and 
some constant $a>0$) tending to infinity exponentially fast 
and hence suggesting a positive probability of never detecting coalescence.
It appears that this issue could be circumvented in the 
general context of~\citep{MR2865624} by a simple adaptation of our 
dominating process defined in~\eqref{eq:DominatingProcess} below, 
which is 
based on the idea of adaptive bounds 
(cf. Figure~\ref{fig:adaptive_bound}).

A perpetuity can be understood as the special case of the stochastic
fixed point equation $\mathcal{X}\overset{d}{=}f(\mathcal{X},U)$ 
in a general state space for independent $\mathcal{X}$ and $U$ and some 
measurable function $f$. See the monograph~\citep{MR3443710} for a 
comprehensive survey on the variety of Markov chain techniques, 
such as CFTP and DCFTP, used to obtain exact samples of $\mathcal{X}$.

The problem of the exact simulation of the first passage event of a 
spectrally positive stable process (resp. a L\'evy process with infinite 
activity and finite variation) is addressed in~\citep{Chi1} 
(resp.~\citep{Chi2}). Algorithm~\ref{alg:Algorithm 1} solves this 
problem for all stable processes as follows: for any $x>0$,
define the \emph{first passage time} $\tau_x:=\inf\{t>0:Y_t\geq x\}$ 
and note that the equality of events $\{\tau_x>t\}=\{\overline{Y}_t<x\}$
for all $t\in(0,\infty)$ and the scaling property yield the equality in law 
$\tau_x\overset{d}{=}(x/\overline{Y}_1)^ \alpha$. 

We conclude the introduction by noting that Proposition~\ref{prop:FirstPereq} 
easily implies the asymptotic behaviour at infinity of the distribution function 
of $\overline{Y}_1$ stated in~\citep[Prop.~VIII.1.4, p.~221]{MR1406564}.
Excluding the spectrally negative case, perpetuity~\eqref{eq:PerpEq1}
and the Grincevi\u{c}ius-Grey theorem~\citep[Thm~2.4.3]{MR3497380} yield
$\lim_{x\to\infty}2\mathbb{P}\left(Y_1U^{1/\alpha}>x\right)/
\mathbb{P}\left(\overline{Y}_1>x\right) =1$.
By Breiman's lemma~\citep[Lem.~B.5.1]{MR3497380} we have 
$\lim_{x\to\infty} 2\mathbb{P}\left(Y_1U^{1/\alpha}>x\right)/\mathbb{P}\left(Y_1>x\right)=1$,
implying $\lim_{x\to\infty} \mathbb{P}\left(\overline{Y}_1>x\right)/x^{-\alpha} =\Gamma(\alpha)\sin(\pi\alpha\rho)/\pi$
via the classical tail behaviour of the stable law~\citep[Sec.~4.3]{MR1745764}. 

The remainder of the paper is structured as follows. In Section~\ref{sec:Proof_Thm1},
we establish perpetuity~\eqref{eq:PerpEq1} and apply it in the proof of 
Theorem~\ref{thm:SecondPereq}. In Section~\ref{sec:SimulationDifficultProcesses}
we define the update function $\psi$ (in Lemma~\ref{lem:perpCoalescence}), 
construct the dominating process and prove
Theorem~\ref{thm:FiniteExpectedTerminationTime} above. Section~\ref{sec:Backward_Sim}
discusses the backward simulation of $\{(D_n,\Theta_n)\}_{n\in\mathbb{Z}}$. 
Finally, a numerical performance analysis is found in Section~\ref{sec:numerics}.

\section{\label{sec:Proof_Thm1}Stochastic Perpetuities}

Let $Y$ be a stable process with stability and positivity parameters $\alpha$ and 
$\rho$, respectively (see Appendix~\ref{sec:StableSampling} below for definition). 
Since $Y_0=0$ and the scaling property yield
$\overline{Y}_{t}=\sup_{s\in\left[0,t\right]}Y_{s}\overset{d}{=}t^{1/\alpha}\overline{Y}_{1}$
for all $t\in[0,\infty)$,
we may restrict our attention to $\overline{Y}_1$.
Let $S\left(\alpha,\rho\right)$ and 
$\overline{S}\left(\alpha,\rho\right)$
denote the laws of 
$Y_{1}$ and  $\overline{Y}_{1}$, respectively.  
Since $\mathbb{P}\left(Y_{t}>0\right)=\rho$ for any $t>0$,
the extreme cases $\rho\in\left\{ 0,1\right\}$ are excluded from our analysis 
as they correspond to $Y$ having monotone paths.
Let $U(0,1)$ denote the uniform law on $(0,1)$
and define
$x^+=\max\{x,0\}$ for any real number $x\in\mathbb{R}$.

\begin{prop}
\label{prop:FirstPereq}Let $\left(\overline{Y}_1,Z,U\right)\sim \overline{S}\left(\alpha,\rho\right)\times S\left(\alpha,\rho\right)\times U\left(0,1\right)$.
Then the law of $\overline{Y}_{1}$ is the unique solution of the following perpetuity: 
\begin{eqnarray}
\overline{Y}_{1} & \overset{d}{=} & U^{\frac{1}{\alpha}}\overline{Y}_{1}+\left(1-U\right)^{\frac{1}{\alpha}}Z^{+}.\label{eq:PerpEq1}
\end{eqnarray}
\end{prop}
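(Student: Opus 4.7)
The plan is to derive~\eqref{eq:PerpEq1} from the stochastic representation of the concave majorant of $Y$ over $[0,1]$ given in~\citep{MR2978134}, combined with the scaling property of the stable law, and to obtain uniqueness from standard perpetuity theory.

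First I would recall the following consequence of the stick-breaking representation in~\citep{MR2978134}: if $V_1,V_2,\ldots$ are i.i.d.\ uniform on $(0,1)$ and $L_i=V_i\prod_{j<i}(1-V_j)$ for $i\geq 1$ (so that $\sum_i L_i=1$ a.s.), and if, conditionally on $(L_i)$, the variables $(\xi_i)_{i\geq 1}$ are independent with $\xi_i\sim Y_{L_i}$, then the collection $\{(L_i,\xi_i)\}_{i\geq 1}$ describes, up to reordering by decreasing slope, the lengths and increments of the faces of the concave majorant of $Y$ on $[0,1]$. Because the concave majorant reaches its maximum value $\overline{Y}_1$ at the junction between its ascending and descending parts, and since a face is ascending iff its increment is positive, I obtain the key distributional identity
\[
\overline{Y}_1\;\overset{d}{=}\;\sum_{i\geq 1}\xi_i^+.
\]

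Next I would apply the scaling property: $Y_{L_i}\overset{d}{=}L_i^{1/\alpha}Z_i$, where $(Z_i)_{i\geq 1}$ are i.i.d.\ copies of $Z\sim S(\alpha,\rho)$, conditionally independent of the lengths. Isolating the first stick $V_1\sim U(0,1)$ and using that $(L_i/(1-V_1))_{i\geq 2}$ is again a stick-breaking sequence independent of $(V_1,Z_1)$, the sum decomposes as
\[
\sum_{i\geq 1}L_i^{1/\alpha}Z_i^+\;=\;V_1^{1/\alpha}Z_1^+ \;+\; (1-V_1)^{1/\alpha}\sum_{i\geq 2}\Bigl(\tfrac{L_i}{1-V_1}\Bigr)^{1/\alpha}Z_i^+.
\]
The second sum is, by construction, an independent copy $\overline{Y}_1'$ of $\overline{Y}_1$, giving $\overline{Y}_1\overset{d}{=}V_1^{1/\alpha}Z^+ + (1-V_1)^{1/\alpha}\overline{Y}_1'$. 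Setting $U:=1-V_1\sim U(0,1)$ yields~\eqref{eq:PerpEq1}.

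For the uniqueness assertion I would appeal to the classical theory of perpetuities (see e.g.~\citep{MR3497380}): the pair $(A_0,A_1)=(U^{1/\alpha},(1-U)^{1/\alpha}Z^+)$ satisfies $\mathbb{E}[\log A_0]=-1/\alpha<0$, and, because stable laws have polynomial tails of order $\alpha$, also $\mathbb{E}[\log^+|A_1|]<\infty$; these two conditions guarantee that $\overline{Y}_1$ is the unique law solving~\eqref{eq:PerpEq1}, given by the a.s.\ convergent series of i.i.d.\ copies of $(A_0,A_1)$.

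The main obstacle I anticipate is the careful invocation of the concave majorant representation: one must justify that passing from the pairs $(L_i,\xi_i)$ in their stick-breaking order to the actual faces of the concave majorant (in time or slope order) is irrelevant for the symmetric sum $\sum_i\xi_i^+$, and that the residual series obtained after peeling off the first stick is, jointly with $(V_1,Z_1)$, distributed as an independent copy of $\overline{Y}_1$. Once these two structural facts are in place, the perpetuity and its uniqueness follow by direct computation.
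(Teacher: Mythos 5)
Your proposal is correct and follows essentially the same route as the paper: the stick-breaking description of the concave majorant from~\citep{MR2978134}, the identity $\overline{Y}_1\overset{d}{=}\sum_n \ell_n^{1/\alpha}Z_n^+$ via scaling, peeling off the first stick to obtain the perpetuity, and uniqueness from~\citep[Thm~2.1.3]{MR3497380}. The two structural points you flag as needing care (invariance of the sum under the ordering of faces, and the residual series being an independent copy of $\overline{Y}_1$) are exactly the ones the paper's proof addresses, and your verification of the moment conditions $\mathbb{E}[\log A_0]<0$ and $\mathbb{E}[\log^+ A_1]<\infty$ for uniqueness is a correct (and slightly more explicit) version of the paper's citation.
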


To prove this result, we need the next definition. 
For any  $a<b$, the \emph{concave majorant}
of a function $f:[a,b]\to\mathbb{R}$
is defined as the smallest concave function $c:[a,b]\to\mathbb{R}$, 
such that $c\left(t\right)\geq x\left(t\right)$ for every $t\in\left[a,b\right]$.
The proof of Proposition~\ref{prop:FirstPereq} exploits the fact that 
the supremum of a function lies on its concave majorant, 
at the end of all (if any) faces with positive slope.
Following the classical result for the complete description
of a concave majorant of random walks, \citep{MR2978134} describes 
the continuous time analogue of these results for L\'evy processes
(\citep{MR2978134} is phrased in terms of the convex minorant, 
but through a change of sign their results cover the concave majorant). 
The idea is as follows: fix a sample path of $Y$ and pick a random face of its concave majorant 
above an independent uniform point in $[0,1]$. 
The length of the chosen face is distributed as $V\sim U(0,1)$ 
and its height 
is distributed as the increment of a stable process over a time interval of duration $V$.
Moreover, after removing this face (together with the path underneath it)
the remainder of the concave majorant  behaves like a concave majorant
of a stable process over the time interval $\left[0,1-V\right]$, 
see~\citep{MR2978134}. 
This recursive relation and the scaling property of $Y$ will yield the perpetuity in~\eqref{eq:PerpEq1}.

\begin{proof}

A stick-breaking process 
$\left\{ \ell_{n}\right\} _{n\geq1}$
on $\left[0,1\right]$ 
is defined recursively as follows: 
\[
\ell_{n}=V_{n}\left(1-L_{n-1}\right),\quad n\geq1,
\]
where
$L_{n-1}=\ell_{1}+\cdots+\ell_{n-1}$,
$L_0=0$
and
$\left\{ V_{n}\right\} _{n\geq1}$ is a sequence of i.i.d. random variables with law $U(0,1)$
(independent of $Y$).
Let $C=(C_t)_{t\in[0,1]}$ be the  concave majorant  of the L\'{e}vy process $Y$. 
Let $\left(d_{n}-g_{n},C_{d_{n}}-C_{g_{n}}\right)_{n\geq1}$ be the
lengths and heights of the faces of $C$ picked at random, uniformly
on lengths and without replacement ($g_n$ and $d_n$ denote the  beginning and end times for 
the $n$-th face). 
\citep[Thm.~1]{MR2978134}
asserts the equality in law
\[
\left(d_{n}-g_{n},C_{d_{n}}-C_{g_{n}}\right)_{n\geq1}\overset{d}{=}\left(\ell_{n},Y_{L_{n}}-Y_{L_{n-1}}\right)_{n\geq1}.
\]

The concave majorant $(C_t)_{t\in[0,1]}$ is piecewise linear, with the corresponding slopes forming
a non-increasing piecewise constant function in $t$. 
Hence 
$\overline{Y}_{1}$ is always contained in the image of the function $C$. Moreover,  the supremum
equals the sum of all the positive heights of $C$: 
\begin{eqnarray*}
\overline{Y}_{1} & = & \sum_{n=1}^{\infty}\left(C_{d_{n}}-C_{g_{n}}\right)^{+}\overset{d}{=}\sum_{n=1}^{\infty}\left(Y_{L_{n}}-Y_{L_{n-1}}\right)^{+}.
\end{eqnarray*}
Conditional on $\left\{ L_{n}\right\}_{n\geq1}$, the random variables
$\left\{ Y_{L_{n}}-Y_{L_{n-1}}\right\}_{n\geq1}$ are independent and have
the same distribution as the respective $Y_{\ell_{n}}$. Hence, for an
independent i.i.d. sequence $\left\{ Z_{n}\right\} _{n\geq1}$ with
law $S\left(\alpha,\rho\right)$ we have
\[
\left(\ell_{n},Y_{Z_{n}}-Y_{Z_{n-1}}\right)_{n\geq1}\overset{d}{=}\left(\ell_{n},\ell_{n}^{\frac{1}{\alpha}}Z_{n}\right)_{n\geq1},
\]
implying
\begin{eqnarray}
\label{eqn:IL}
\overline{Y}_{1} & \overset{d}{=}  &\sum_{n=1}^{\infty}\left(Y_{L_{n}}-Y_{L_{n-1}}\right)^{+}\overset{d}{=}\sum_{n=1}^{\infty}\ell_{n}^{\frac{1}{\alpha}}Z_{n}^{+}.
\end{eqnarray}

It is well-known that 
$\left\{ \frac{\ell_{n}}{1-\ell_{1}}\right\} _{n\geq2}$ is a stick-breaking
process on $\left[0,1\right]$,
independent of $\ell_1\sim U(0,1)$ (and $\left\{ Z_{n}\right\} _{n\geq1}$).  
Hence by~\eqref{eqn:IL} we find the equality in law
\[
\overline{Y}_{1}\overset{d}{=}\sum_{n=2}^{\infty}\left(\frac{\ell_{n}}{1-\ell_{1}}\right)^{\frac{1}{\alpha}}Z_{n}^{+},
\]
which, together with~\eqref{eqn:IL}, implies the perpetuity 

\begin{eqnarray*}
\overline{Y}_{1} & \overset{d}{=} & \ell_{1}^{\frac{1}{\alpha}}Z_{1}^{+}+\left(1-\ell_{1}\right)^{\frac{1}{\alpha}}\overline{Y}_{1}.
\end{eqnarray*}
Finally, the uniqueness of solution follows from~\citep[Thm~2.1.3]{MR3497380}.
\end{proof}

Let $S^{+}\left(\alpha,\rho\right)$ denote the law of $Y_{1}$ conditioned on 
being positive. For $n,m\in\mathbb{Z}$ define the sets
\begin{equation}
\label{eq:def_mathcal_Z}
\mathcal{Z}^{n} = \left\{ k\in\mathbb{Z}:k<n\right\} ,\quad\mathcal{Z}_{m}^{n}=\mathcal{Z}^{n}\backslash\mathcal{Z}^{m}.
\end{equation}

\begin{proof}[\label{proof:Theorem1}Proof of Theorem~\ref{thm:SecondPereq}]
Note that the random variable $Z^{+}$ in Propostiion~\ref{prop:FirstPereq} 
behaves like the product of a Bernoulli random variable and a stable random variable conditioned
on being positive, i.e.,  if $B\sim Ber\left(\rho\right)$ and $S\sim S^{+}\left(\alpha,\rho\right)$
are independent, then $Z^{+}\overset{d}{=}BS$. 
Since $\mathbb{P}\left(Z^{+}=0\right)=1-\rho>0$,
the idea behind the proof of 
Theorem~\ref{thm:SecondPereq}
is to iterate perpetuity~\eqref{eq:PerpEq1}
backwards in time until the first time we observe $Z^{+}>0$.

More precisely, by Proposition~\ref{prop:FirstPereq} and Kolmogorov's consistency theorem we can construct
a stationary Markov chain $\left\{ \left(U_{n},Z_{n},\zeta_{n}\right)\right\} _{n\in\mathcal{Z}^{1}}$
with invariant law $U\left(0,1\right)\times S\left(\alpha,\rho\right)\times\overline{S}\left(\alpha,\rho\right)$,
where $\left\{ \left(U_{n},Z_{n}\right)\right\} _{n\in\mathcal{Z}^{1}}$
is an i.i.d. sequence with law $U\left(0,1\right)\times S\left(\alpha,\rho\right)$ 
and 
\begin{eqnarray*}
\zeta_{n+1} & = & U_{n}^{\frac{1}{\alpha}}Z_{n}^{+}+\left(1-U_{n}\right)^{\frac{1}{\alpha}}\zeta_{n},\quad n\in\mathcal{Z}^{0}.
\end{eqnarray*}
Define $V_{0}=1$ and  $V_{n}=\prod_{m\in\mathcal{Z}_{n}^{0}}\left(1-U_{m}\right)$
for $n\in\mathcal{Z}^{0}$. Then the following equality holds 
\begin{equation}
\zeta_{0}=\sum_{m\in\mathcal{Z}_{n}^{0}}\left(U_{m}V_{m+1}\right)^{\frac{1}{\alpha}}Z_{m}^{+}+V_{n}^{\frac{1}{\alpha}}\zeta_{n}\qquad
\text{ for all $n\in\mathcal{Z}^{0}$.}
\label{eq:nth-step}
\end{equation}
Let
$\tau=\sup\left\{ n\in\mathcal{Z}^{0}:Z_{n}>0\right\}$
(with convention $\sup\emptyset=-\infty$)
be the last time we see a positive value in the sequence
$\left\{ Z_{n}\right\}_{n\in\mathcal{Z}^{0}}$.
Substituting 
$n=\tau$ in equation~\eqref{eq:nth-step}, we get
\begin{equation}
\label{eq:as_equality}
\zeta_{0}=V_{\tau+1}^{\frac{1}{\alpha}} \left(\left(1-U_{\tau}\right)^{\frac{1}{\alpha}}\zeta_{\tau}+U_{\tau}^{\frac{1}{\alpha}}Z_{\tau}\right).
\end{equation}
This equality of course yields the same equality in law. It will hence imply the perpetuity in~\eqref{eq:PerpEq2},
if we prove that the random variables involved have the desired laws and independence structure. 

The events $\left\{ Z_{n}>0\right\}$, $n\in\mathcal{Z}^0$,
are independent with probability $\rho$,  making $\tau$  a  geometric random variable on $\mathcal{Z}^0$ with parameter $\rho$.
By construction, the coordinates of the vector $(U_n,Z_n,\zeta_n)$ are independent for any $n\in\mathcal{Z}^0$.
Hence we have 
$\left(U_{\tau},Z_{\tau},\zeta_{\tau}\right)\sim U\left(0,1\right)\times S^{+}\left(\alpha,\rho\right)\times\overline{S}\left(\alpha,\rho\right)$.
Moreover,
$\left(U_{\tau},Z_{\tau},\zeta_{\tau}\right)$
is independent of $\left(\tau,V_{\tau+1}\right)$.
Hence~\eqref{eq:as_equality} will imply the perpetuity in the theorem if we prove that 
$\Lambda$ has the same law as 
$V_{\tau+1}$.
Put differently, 
as $\tau$ and $U_0$ are independent, 
it is sufficient to prove the following equality in law
\begin{equation}
\label{eq:Distrib_identity}
V_{\tau+1}\overset{d}{=}1_{\tau=-1}+1_{\tau\neq-1}U_{0}^{\frac{1}{\rho}}=1+1_{\tau\neq-1}\left(U_{0}^{\frac{1}{\rho}}-1\right).
\end{equation}

Since $-\log\left(1-U_{1}\right)\sim Exp\left(1\right)$
is exponential with mean one,
$-\log\left(V_{n}\right)$ is gamma distributed with density
$x\mapsto x^{-n-1}e^{-x}/(-n-1)!$  for any $n\in\mathcal{Z}^0$. 
Hence, on the event $\{\tau\neq -1\}$,
the density of the conditional law 
$\left.-\log\left(V_{\tau+1}\right)\right|\tau$
is given by
$x\mapsto x^{-\tau-2}e^{-x}/(-\tau-2)!$. 
Thus, the conditional law 
$\left.-\log\left(V_{\tau+1}\right)\right|\{\tau\neq -1\}$
is exponential with density 
\begin{equation}
\label{eq:log_Lambda_exp}
x\mapsto \frac{1}{1-\rho}\sum_{k=2}^{\infty}\rho\left(1-\rho\right)^{k-1}\frac{x^{k-2}}{\left(k-2\right)!}e^{-x}=\rho e^{-\rho x},\qquad x>0.
\end{equation}
Since $-\log\left(V_{\tau+1}\right)$ takes the value $0$ when $\tau=-1$,
which happens with probability $\rho$, and is otherwise exponential with mean $1/\rho$, 
the distributional identity in~\eqref{eq:Distrib_identity} follows.

Finally, the uniqueness of the solution for perpetuity~\eqref{eq:PerpEq2} 
follows from~\citep[Thm~2.1.3]{MR3497380}.
\end{proof}

\section{\label{sec:SimulationDifficultProcesses} The Markov chain $X$ and the dominating process $D$ in Algorithm~\ref{alg:Algorithm 1}}

Let
$\mathcal{A}=\left(0,\infty\right)\times\left(0,1\right)\times\left(0,1\right) \times\left(0,1\right]$
and define the function $\phi:\left(0,\infty\right)\times\mathcal{A}\to\left(0,\infty\right)$
by
\begin{eqnarray}
\label{eq:phi_update}
\phi\left(x,\theta\right) & = &
\lambda^{\frac{1}{\alpha}}\left(u^{\frac{1}{\alpha}}x+\left(1-u\right)^{\frac{1}{\alpha}}s\right),\qquad
x\in\left(0,\infty\right),\quad\theta=(s,u,w,\lambda)\in\mathcal{A}.  
\end{eqnarray}
Note that the map $x\mapsto \phi\left(x,\theta\right)$ is increasing and linear in $x$ for all $\theta\in\mathcal{A}$ and does not depend on $w$.
Let 
$W\sim U\left(0,1\right)$ be independent of random variables 
$S$, $U$, and $\Lambda$ defined in Theorem~\ref{thm:SecondPereq}. 
Then, by Theorem~\ref{thm:SecondPereq}, 
we have
$\zeta\overset{d}{=}\phi\left(\zeta,\Theta\right)$, 
where
$\zeta\sim\overline{S}\left(\alpha,\rho\right)$ 
is independent of
$\Theta=\left(S,U,W,\Lambda\right)$.
Hence a Markov chain with the update function $\phi$ has the correct invariant law
but does not allow for coalescence: if for any $x,y\in(0,\infty)$ we have $\phi(x,\Theta)=\phi(y,\Theta)$,
by~\eqref{eq:phi_update} it follows $x=y$.
But the structure of $\phi$ and the additional randomness in $W$ allow us to modify 
the update function $x \mapsto \phi(x,\theta)$ so that coalescence can be achieved,
while keeping the law of the chain unchanged. 

\begin{lem}
\label{lem:perpCoalescence}Define the functions $\psi:\left(0,\infty\right)\times\mathcal{A}\to\left(0,\infty\right)$
and $a:\mathcal{A}\to\left(0,\infty\right)$ by the formulae
\begin{eqnarray}
\psi\left(x,\theta\right) & = & 1_{\left\{ a\left(\theta\right)\geq x\right\}
}w^{\frac{1}{\alpha\rho}}\left(1-u\right)^{\frac{1}{\alpha}}s+1_{\left\{
a\left(\theta\right)<x\right\}
}\lambda^{\frac{1}{\alpha}}\left(u^{\frac{1}{\alpha}}x+\left(1-u\right)^{\frac{1}{\alpha}}s\right),
\label{eq:varphi_and_A}\\
a\left(\theta\right) & = &
\left(\lambda^{-\frac{1}{\alpha}}-1\right)\left(\frac{1-u}{u}\right)^{\frac{1}{\alpha}}s.
\label{eq:varphi_and_a}
\end{eqnarray}
The map $x\mapsto\psi(x,\theta)$ is non-decreasing in $x$ for all $\theta\in\mathcal{A}$. 
Moreover, for $\zeta$ and $\Theta$
as in the paragraph above, we have
$\phi\left(x,\Theta\right)\overset{d}{=}\psi\left(x,\Theta\right)$
for all $x>0$ and  
$\overline{S}\left(\alpha,\rho\right)$
is the unique solution of the distributional equation
$\zeta\overset{d}{=}\psi\left(\zeta,\Theta\right)$.
\end{lem}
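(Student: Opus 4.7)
The plan is to verify the three claims (monotonicity, distributional identity, uniqueness) in turn. Everything hinges on the fact that the threshold $a(\theta)$ is tuned so that the cut-off $x = a(\theta)$ corresponds, under $\phi(\cdot,\theta)$, to the fixed value $(1-u)^{1/\alpha}s$. Indeed, a direct substitution of \eqref{eq:varphi_and_a} into \eqref{eq:phi_update} gives the key identity
\[
\phi(a(\theta),\theta) \;=\; \lambda^{1/\alpha}\bigl(u^{1/\alpha}a(\theta) + (1-u)^{1/\alpha} s\bigr) \;=\; (1-u)^{1/\alpha}s,
\]
and rearranging shows further that
\[
\{a(\Theta)\geq x\} \;=\; \bigl\{\phi(x,\Theta) \leq (1-U)^{1/\alpha} S\bigr\}.
\]
These two identities organise the entire argument.

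For the monotonicity of $x\mapsto \psi(x,\theta)$: on $(a(\theta),\infty)$ the function coincides with $\phi(\cdot,\theta)$ and is linear increasing, while on $(0,a(\theta)]$ it is constant, equal to $w^{1/(\alpha\rho)}(1-u)^{1/\alpha}s$. Since $w\in(0,1]$, this constant is $\leq (1-u)^{1/\alpha}s=\phi(a(\theta),\theta)$, so the jump at $a(\theta)$ is non-negative and $\psi(\cdot,\theta)$ is non-decreasing.

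For the distributional identity, I would condition on $(U,S)$, set $T=(1-U)^{1/\alpha}S$ and $Y=\phi(x,\Theta)$, and rewrite
\[
\psi(x,\Theta) \;=\; Y\cdot 1_{\{Y>T\}} + W^{1/(\alpha\rho)} T \cdot 1_{\{Y\leq T\}},
\]
using the event identity above. The task reduces to showing that the conditional law of $Y$ given $\{Y\leq T\}$ equals the law of $W^{1/(\alpha\rho)}T$, since $W$ is independent of $(\Lambda,U,S)$. Writing $Y=\Lambda^{1/\alpha}K$ with $K=u^{1/\alpha}x+(1-u)^{1/\alpha}s$ and using that $\Lambda^{1/\alpha}$ is a mixture of $\delta_1$ (mass $\rho$) and $V^{1/(\alpha\rho)}$ (mass $1-\rho$, $V\sim U(0,1)$), a short computation gives $\mathbb{P}(Y\leq rT\mid Y\leq T)=r^{\alpha\rho}$ for $r\in(0,1]$, which is precisely the CDF of $W^{1/(\alpha\rho)}$. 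The refreshment by an independent copy $W$ therefore leaves the distribution invariant, giving $\psi(x,\Theta)\overset{d}{=}Y=\phi(x,\Theta)$.

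The uniqueness of $\overline{S}(\alpha,\rho)$ as a solution of $\zeta\overset{d}{=}\psi(\zeta,\Theta)$ is then immediate: the distributional identity $\phi(x,\Theta)\overset{d}{=}\psi(x,\Theta)$ for each fixed $x$ means that $\phi$ and $\psi$ induce the same Markov transition kernel on $(0,\infty)$, so the distributional fixed-point equations for the two updates have identical solution sets; by Theorem~\ref{thm:SecondPereq}, the unique solution is $\overline{S}(\alpha,\rho)$. The main obstacle is the second step, specifically the event-equivalence and the identification of the conditional law on $\{Y\leq T\}$; once these are in place, the remaining pieces are routine.
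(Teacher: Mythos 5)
Your proposal is correct and follows essentially the same route as the paper: the monotonicity argument is identical, and your identification of the conditional law of $Y=\phi(x,\Theta)$ given $\{Y\leq T\}$ as that of $W^{1/(\alpha\rho)}T$ is precisely the paper's observation (after conditioning on $(U,S)$) that $\Lambda^{\rho}$ restricted to the coalescence event $\{a(\Theta)\geq x\}=\{\Lambda^{\rho}\leq v(U,S)\}$ is uniform on $(0,v(U,S))$, matching the law of $v(U,S)W$. The uniqueness step via the equality of transition kernels and Theorem~\ref{thm:SecondPereq} is also the paper's argument.
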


\begin{proof}
The function $\psi$ 
takes constant value of $w^{\frac{1}{\alpha\rho}}\left(1-u\right)^{\frac{1}{\alpha}}s$
for $x\in\left(0,a\left(\theta\right)\right]$ and increases linearly
on the interval $\left(a\left(\theta\right),\infty\right)$ with the right limit 
satisfying 
$\lim_{x\searrow a\left(\theta\right)}\psi\left(x,\theta\right)=\left(1-u\right)^{\frac{1}{\alpha}}s>\psi\left(a\left(\theta\right),\theta\right)$.
Hence the desired monotonicity follows.

We now prove that $\phi\left(x,\Theta\right)\overset{d}{=}\psi\left(x,\Theta\right)$
for all $x>0$, i.e the transition probabilities for the update functions $\phi$ and $\psi$ coincide.
Pick $x>0$ and note that $\left\{ \phi\left(x,\Theta\right)=\psi\left(x,\Theta\right)\right\} 
\supset\left\{ a\left(\Theta\right)<x\right\}$.
Thus, for any $y>0$ we have
$\mathbb{P}\left( \phi\left(x,\Theta\right)\leq y,
a\left(\Theta\right)<x \right)=\mathbb{P}\left(
\psi\left(x,\Theta\right)\leq y,
a\left(\Theta\right)<x \right)$.  
Define 
\[
v\left(u,s\right)=\left(\frac{\left(1-u\right)^{\frac{1}{\alpha}}s}{u^{\frac{1}{\alpha}}x+\left(1-u\right)^{\frac{1}{\alpha}}s}\right)^{\alpha\rho}\in\left(0,1\right),
\]
and note that $\left\{ a\left(\Theta\right)\geq x\right\} =\left\{ \Lambda^{\rho}\leq v\left(U,S\right)\right\}$.
On this event, the definition of $\Lambda$ in Theorem~\ref{thm:SecondPereq}
implies the inequality $\Lambda<1$, in which case $\Lambda^{\rho}$
is uniform on 
$(0,1)$. 
Hence the conditional law of $\Lambda$, given 
$\left(U,S\right)$ and $\left\{ a\left(\Theta\right)\geq x\right\} $,
is uniform on the interval 
$(0,v\left(U,S\right))$.
Moreover, the conditional law of $v\left(U,S\right)W$, given  
$\left(U,S\right)$ and on $\left\{ a\left(\Theta\right)\geq x\right\}$,
is also uniform on 
$(0,v\left(U,S\right))$.
Hence
for any $y>0$
the following equalities hold:
\begin{eqnarray*}
\mathbb{P}\left(\left. \phi\left(x,\Theta\right)\leq y, a\left(\Theta\right)\geq x \right|U,S\right)
 & = & \mathbb{P}\left(\left. \Lambda^{\rho}\leq\left(\frac{y}{U^{\frac{1}{\alpha}}x+\left(1-U\right)^{\frac{1}{\alpha}}S}\right)^{\alpha\rho}, a\left(\Theta\right)\geq x \right|U,S\right)\\
 & = & \mathbb{P}\left(\left. v\left(U,S\right)W\leq\left(\frac{y}{U^{\frac{1}{\alpha}}x+\left(1-U\right)^{\frac{1}{\alpha}}S}\right)^{\alpha\rho}, a\left(\Theta\right)\geq x \right|U,S\right)\\
 & = & \mathbb{P}\left(\left. W^{\frac{1}{\alpha\rho}}\left(1-U\right)^{\frac{1}{\alpha}}S\leq y, a\left(\Theta\right)\geq x\ \right|U,S\right)\\
 & = & \mathbb{P}\left(\left. \psi\left(x,\Theta\right)\leq y, a\left(\Theta\right)\geq x \right|U,S\right).
\end{eqnarray*}
Taking expectations in this identity yields the unconditional equality 
$\mathbb{P}\left(\phi\left(x,\Theta\right)\leq y, a\left(\Theta\right)\geq x \right)
=\mathbb{P}\left(\psi\left(x,\Theta\right)\leq y, a\left(\Theta\right)\geq x\right)$.
Hence we get
$\mathbb{P}\left(\phi\left(x,\Theta\right)\leq y\right) = \mathbb{P}\left(\psi\left(x,\Theta\right)\leq y\right)$
for all $y>0$, implying the equality in law
$\phi\left(x,\Theta\right)\overset{d}{=}\psi\left(x,\Theta\right)$
for arbitrary $x>0$.

Pick $y>0$.
Since 
$\Theta$ and $\zeta$ 
are independent, 
by Theorem~\ref{thm:SecondPereq} 
we have
\begin{eqnarray*}
\mathbb{P}\left(\zeta\leq y\right)=\mathbb{P}\left(\phi\left(\zeta,\Theta\right)\leq y\right) & = & \int_{[0,\infty)}\mathbb{P}\left(\phi\left(x,\Theta\right)\leq y\right)\mathbb{P}\left(\zeta\in dx\right)\\
 & = & \int_{[0,\infty)}\mathbb{P}\left(\psi\left(x,\Theta\right)\leq y\right)\mathbb{P}\left(\zeta\in dx\right)=\mathbb{P}\left(\psi\left(\zeta,\Theta\right)\leq y\right),
\end{eqnarray*}
implying
$\zeta\overset{d}{=}\psi\left(\zeta,\Theta\right)$.
Moreover, if there exists some $\zeta^{\prime}$ (independent
of $\Theta$) satisfying $\zeta^{\prime}\overset{d}{=}\psi\left(\zeta^{\prime},\Theta\right)$,
this calculation implies the equality 
$\zeta^{\prime}\overset{d}{=}\phi\left(\zeta^{\prime},\Theta\right)$.
By Theorem~\ref{thm:SecondPereq} we get
$\zeta^{\prime}\overset{d}{=}\zeta$,
as claimed.
\end{proof}

By Lemma~\ref{lem:perpCoalescence} and Kolmogorov's
consistency theorem, there exists a probability space supporting a sequence $\left\{ \Theta_{n}\right\} _{n\in\mathbb{Z}}$
of independent copies of $\Theta$ and a stationary Markov chain $\left\{ X_{n}\right\} _{n\in\mathbb{Z}}$,
satisfying $X_{n+1}=\psi\left(X_{n},\Theta_{n}\right)$ for all $n\in\mathbb{Z}$. 
In the remainder of the paper, $\{(X_n,\Theta_n)\}_{n\in\mathbb{Z}}$ denotes the corresponding Markov chain on
$(0,\infty)\times\mathcal{A}$.
In order to detect coalescence in Algorithm~\ref{alg:Algorithm 1},
we now construct a dominating process $\{D_n\}_{n\in\mathbb{Z}}$.

With this in mind, fix constants $\delta$ and $d$ satisfying 
$0<\delta<d<\frac{1}{\alpha\rho}$. \label{page:d_delta}
Let $I_{k}^{n}=1_{\left\{ S_{k}>e^{\delta\left(n-1-k\right)}\right\}}$
for all $n\in\mathbb{Z}$, $k\in\mathcal{Z}^{n}$ (see~\eqref{eq:def_mathcal_Z} above), 
where $S_k\sim S^{+}(\alpha,\rho)$ is the first component of $\Theta_k$ 
(see the first paragraph of Section~\ref{sec:SimulationDifficultProcesses}).
Fix $\gamma>0$ such that $\mathbb{E} S_{1}^{\gamma}<\infty$ (see~\eqref{eq:MellinStable}).
Markov's inequality implies 
\begin{equation}
p\left(m\right)=\mathbb{P}\left(S_{1}\leq e^{\delta m}\right)\geq1-e^{-\delta\gamma m}\mathbb{E} S_{1}^{\gamma},\quad m\geq0,\label{eq:probTails}
\end{equation}
and hence 
$\sum_{m=0}^{\infty}(1-p\left(m\right))<\infty$. Since $\{S_k\}_{k\in\mathbb{Z}}$ are independent, the Borel-Cantelli
lemma ensures that, for a fixed $n\in\mathbb{Z}$, 
the events 
$\left\{ S_{k}>e^{\delta\left(n-1-k\right)}\right\} =\left\{ I_{k}^{n}=1\right\} $
occur for only finitely many $k\in\mathcal{Z}^{n}$ a.s.  
Let $\chi_{n}$ be the smallest time beyond which the indicators 
$I_{k}^{n}$ are all zero: 
\begin{equation}
\label{eq:chi}
\chi_{n}=(n-1)\wedge\inf\left\{ k\in\mathcal{Z}^{n}:I_{k}^{n}=1\right\},
\end{equation}
with convention $\inf\emptyset =\infty$.
Note that $-\infty<\chi_{n}\leq n-1$ holds a.s. for all $n\in\mathbb{Z}$. Since the integers are countable,
we have $n-1\geq \chi_{n}>-\infty$ for all $n\in\mathbb{Z}$ a.s. 

Define the i.i.d. sequence $\left\{ F_{n}\right\} _{n\in\mathbb{Z}}$
by
$
F_{n}=d+\frac{1}{\alpha}\log\left(\Lambda_{n}U_{n}\right)$,
where $U_n$ and $\Lambda_n$ are the second and fourth components of $\Theta_n$, respectively 
(see the first paragraph of Section~\ref{sec:SimulationDifficultProcesses}).
Note that $d-F_{n}$ 
has the same law as a sum of (random) geometrically many independent exponential random variables and is hence
exponentially distributed with mean
$\mathbb{E}[d-F_{n}]=\frac{1}{\alpha\rho}$. 
Let $C=\left\{ C_{n}\right\} _{n\in\mathbb{Z}}$ be a random walk defined 
by $C_{0}=0$ and 
\begin{equation}
\label{eq:RW}
C_{n+1}=C_{n}-F_{n},\quad\text{$n\in\mathbb{Z}$.} 
\end{equation}
Recall definition~\eqref{eq:def_mathcal_Z}
and
let $R=\left\{ R_{n}\right\} _{n\in\mathbb{Z}}$
be the reflected process of the walk $\left\{ C_{n}\right\} _{n\in\mathbb{Z}}$,
that is
\begin{equation}
\label{eq:R}
R_{n}=\sup_{k\in\mathcal{Z}^{n+1}}C_{k}-C_{n},\quad n\in\mathbb{Z}.
\end{equation}
For any $n\in\mathbb{Z}$, define  the following random variables
\begin{eqnarray}
D_{n} & = &
\exp\left(R_{n}\right)\left(\frac{e^{\left(d-\delta\right)\left(\chi_{n}-n\right)}}{1-e^{\delta-d}}+
\sum_{k\in\mathcal{Z}^n_{\chi_{n}}}e^{-\left(n-1-k\right)d}S_{k}\left(1-U_{k}\right)^{\frac{1}{\alpha}}\right),\label{eq:DominatingProcess}\\
D_{n}^{\prime} & = &
\exp\left(R_{n}\right)\left(\frac{1}{1-e^{\delta-d}}+D_{n}^{\prime\prime}\right),
\qquad\text{where}\qquad D_{n}^{\prime\prime}  =  \sum_{k\in\mathcal{Z}^n}e^{-\left(n-1-k\right)d}S_{k}.
\label{eq:DoubleDominatingProcess}
\end{eqnarray}
The sum in~\eqref{eq:DominatingProcess} is taken to be zero if $\mathcal{Z}^n_{\chi_n}=\emptyset$, i.e. if $\chi_n=n$. 
Note that the series in $D_n''$ is absolutely convergent by the Borel-Cantelli lemma, but 
$D_n'$ cannot be simulated directly as it depends on an infinite sum. 
Finally,
define the random element 
$\Xi_n=\left(\Theta_{n},R_{n},D_{n}^{\prime}\right)$
for any $n\in\mathbb{Z}$.

\begin{lem}
\label{lem:recurrenceDomination}
{\normalfont(a)}  $X_{n}\leq D_{n}\leq D_{n}^{\prime}$ for all  
$n\in\mathbb{Z}$ a.s.\\
{\normalfont(b)} The processes $R=\left\{ R_{n}\right\}_{n\in\mathbb{Z}} $ and $\Xi=\{\Xi_n\} _{n\in\mathbb{Z}}$ 
are Markov, stationary and $\varphi$-irreducible (see definition~\citep[p.~82]{MR2509253})
with respect to the respective invariant distributions.
\end{lem}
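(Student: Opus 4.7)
The plan for part (a) starts with the pointwise upper bound
\begin{equation*}
\psi(x,\theta)\leq \lambda^{1/\alpha}u^{1/\alpha}x+(1-u)^{1/\alpha}s,
\end{equation*}
valid for all $(x,\theta)\in(0,\infty)\times\mathcal{A}$. This is an immediate consequence of~\eqref{eq:varphi_and_A}: on $\{a(\theta)\geq x\}$ drop the $\lambda^{1/\alpha}u^{1/\alpha}x$ term and use $w^{1/(\alpha\rho)}\leq 1$; on $\{a(\theta)<x\}$ use $\lambda^{1/\alpha}\leq 1$ on the trailing summand. Substituting into $X_n=\psi(X_{n-1},\Theta_{n-1})$ and iterating, using $\Lambda_k^{1/\alpha}U_k^{1/\alpha}=e^{F_k-d}$ and $\sum_{j=k+1}^{n-1}F_j=C_{k+1}-C_n$, I obtain
\begin{equation*}
X_n\leq e^{(C_m-C_n)-(n-m)d}X_m+\sum_{k=m}^{n-1}e^{(C_{k+1}-C_n)-(n-1-k)d}(1-U_k)^{1/\alpha}S_k,\qquad m<n.
\end{equation*}
Letting $m\to-\infty$, the leading term vanishes a.s.\ by combining the SLLN for $\{F_k-d\}$ (mean $-1/(\alpha\rho)$) with the Pareto tail $\mathbb{P}(\overline{Y}_1>t)=O(t^{-\alpha})$ noted at the end of the introduction: Borel--Cantelli gives $X_m=O(e^{\epsilon|m|})$ a.s.\ for every $\epsilon>0$, while the multiplier decays at rate $1/(\alpha\rho)$, so choosing $\epsilon<1/(\alpha\rho)$ suffices. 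A parallel tail argument using the same decay for $S_k$ shows the infinite sum converges a.s.

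To recognise the resulting limit as $D_n$, I use $C_{k+1}-C_n\leq\sup_{j\leq n}C_j-C_n=R_n$ for $k\leq n-1$ to factor out $e^{R_n}$, then split the infinite sum at $\chi_n$. For $k<\chi_n$ the definition~\eqref{eq:chi} forces $S_k\leq e^{\delta(n-1-k)}$, so the truncated tail is dominated by the geometric series $\sum_{j\geq n-\chi_n}e^{-j(d-\delta)}=e^{(d-\delta)(\chi_n-n)}/(1-e^{\delta-d})$, exactly reproducing the first term of~\eqref{eq:DominatingProcess}. The bound $D_n\leq D_n'$ is then immediate by termwise comparison: $e^{(d-\delta)(\chi_n-n)}\leq 1$ (since $\chi_n\leq n-1$ and $d>\delta$), $(1-U_k)^{1/\alpha}\leq 1$, and $\mathcal{Z}_{\chi_n}^n\subset \mathcal{Z}^n$ with nonnegative summands.

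For part (b) I verify the three properties in sequence. Splitting $\sup_{k\leq n}C_k=\max(C_n,\sup_{k\leq n-1}C_k)$ and using $C_{n-1}-C_n=F_{n-1}$ yields the Lindley recursion $R_n=(R_{n-1}+F_{n-1})^+$, making $R$ Markov with i.i.d.\ driver. For $\Xi$, reindexing~\eqref{eq:DoubleDominatingProcess} gives $D_n''=S_{n-1}+e^{-d}D_{n-1}''$, and $D_{n-1}''$ is recoverable from $\Xi_{n-1}$ by inverting the formula for $D_{n-1}'$; hence $\Xi_{n-1}\to\Xi_n$ depends only on $\Xi_{n-1}$ and the fresh draw $\Theta_n$. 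Stationarity for both chains follows from the shift-equivariance of the above explicit representations as functionals of the two-sided i.i.d.\ sequence $\{\Theta_k\}_{k\in\mathbb{Z}}$. For $\varphi$-irreducibility of $R$, I take $\varphi=\delta_0$: since $d-F_{n-1}$ is $\operatorname{Exp}(\alpha\rho)$, one computes $\mathbb{P}(R_n=0\mid R_{n-1}=r)=e^{-\alpha\rho(r+d)}>0$. For $\Xi$, I reuse the reset event $\{R_n=0\}$, which depends only on $(\Lambda_{n-1},U_{n-1})$ and is therefore independent of $S_{n-1}$; on this event $D_n'=1/(1-e^{\delta-d})+S_{n-1}+e^{-d}D_{n-1}''$ inherits a density on the half-line $(1/(1-e^{\delta-d})+e^{-d}D_{n-1}'',\infty)$ from $S_{n-1}$, while $\Theta_n$ has its natural density on $\mathcal{A}$. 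The resulting one-step kernel therefore dominates a reference measure of product form $\mu_\Theta\otimes\delta_0\otimes\mathrm{Leb}$ restricted to an open set, which is the desired $\varphi$.

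The principal obstacle is the limit $m\to-\infty$ in part (a): it forces one to weigh an exponentially decaying multiplier (via LLN) against a factor $X_m$ whose stationary law has only Pareto tails, so a Borel--Cantelli estimate must trade polynomial tail decay against exponential multiplier decay. A secondary subtlety is the choice of $\varphi$ for $\Xi$: its one-step transition is concentrated on a lower-dimensional submanifold since $(R_n,D_n')$ is a deterministic function of $\Xi_{n-1}$, so irreducibility only becomes apparent once one passes through the reset event $\{R_n=0\}$ and exploits the independent continuous randomness in $S_{n-1}$.
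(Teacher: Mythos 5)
Your proof is correct, but part (a) follows a genuinely different route from the paper's. The paper writes $X_n$ \emph{exactly} via the last coalescence time $\tau_n=\sup\{k<n: X_k\le a(\Theta_k)\}$: once a coalescence has occurred the trajectory forgets its past, so $X_n$ is a finite iterate of $\psi$ started from the constant value $W_{\tau_n}^{1/(\alpha\rho)}(1-U_{\tau_n})^{1/\alpha}S_{\tau_n}$, and termwise comparison of the resulting explicit sum gives $X_n\le e^{R_n}\sum_{k\in\mathcal{Z}^n}e^{-(n-1-k)d}(1-U_k)^{1/\alpha}S_k$ directly. You instead iterate the one-step inequality $\psi(x,\theta)\le\lambda^{1/\alpha}u^{1/\alpha}x+(1-u)^{1/\alpha}s$ and kill the remainder $e^{(C_m-C_n)-(n-m)d}X_m$ as $m\to-\infty$ by trading the SLLN decay rate $1/(\alpha\rho)$ of the multiplier against the $O(t^{-\alpha})$ tail of the stationary law via Borel--Cantelli. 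Both arguments land on the same intermediate bound and the same split at $\chi_n$; your version dispenses with the coalescence-time representation entirely (and in fact handles the case $\tau_n=-\infty$, where the paper's exact representation is itself an infinite sum, with more explicit care), at the price of invoking the tail asymptotics of $\overline{Y}_1$.

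Part (b) matches the paper's argument in substance (Lindley recursion, $D_n''=S_{n-1}+e^{-d}D_{n-1}''$, shift-equivariance). Two small points. First, you establish $\delta_0$-irreducibility of $R$ rather than irreducibility with respect to its invariant law; this is fine, since a $\varphi$-irreducible chain admitting an invariant probability measure is irreducible with respect to that measure (the invariant law is a maximal irreducibility measure), but that bridging remark should be stated. Second, your claim that the \emph{one-step} kernel of $\Xi$ dominates a fixed measure $\mu_\Theta\otimes\delta_0\otimes\mathrm{Leb}$ on an open set is not quite right as written: on $\{R_n=0\}$ the density of $D_n'$ is supported on $(1/(1-e^{\delta-d})+e^{-d}D_{n-1}'',\infty)$, which depends on the starting point, so no single open set works uniformly. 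You need several steps: after $k$ steps the inherited term $e^{-kd}D_{n-k}''$ is arbitrarily small while $\sum_{j=1}^{k}e^{-(j-1)d}S_{n-j}$ has full support on $(0,\infty)$, so the $k$-step kernels eventually charge every Lebesgue-positive subset of $(1/(1-e^{\delta-d}),\infty)$ from every starting point. With that repair the argument is complete and is essentially the multi-step accessibility argument the paper sketches.
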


\begin{proof}
(a) Since $\mathbb{E} F_{1}<0$, by the strong law of large numbers we have
$C_{-n}\to-\infty$ a.s. as $n\to\infty$. Hence  
$R_{n}<\infty$ for all $n\in\mathbb{Z}$ a.s. 
and a direct termwise comparison yields
$D_{n}^{\prime}\geq D_{n}$ for all $n\in\mathbb{Z}$.
It remains to prove that $X_{n}\leq D_{n}$ for all $n\in\mathbb{Z}$. 

Recall that the function 
$\theta\mapsto a(\theta)$ is defined in~\eqref{eq:varphi_and_a}.
Let $\tau_{n}=\sup\left\{ k\in\mathcal{Z}^{n}:X_{k}\leq a\left(\Theta_{k}\right)\right\}$
(with convention $\sup\emptyset = -\infty$)
be the last time the coalescence occurred before $n\in\mathbb{Z}$. 
If 
$\tau_{n}>-\infty$,
the value
$X_{1+\tau_{n}}$
does not depend on
$X_{\tau_{n}}$, 
and neither do the values of the chain taken at subsequent times. In particular, 
\[
X_{n}=\psi\left(X_{n-1},\Theta_{n-1}\right)=\underbrace{\psi\bigg(\cdots\psi}_{n-1-\tau_{n}}\bigg(W_{\tau_{n}}^{\frac{1}{\alpha\rho}}\left(1-U_{\tau_{n}}\right)^{\frac{1}{\alpha}}S_{\tau_{n}},\Theta_{\tau_{n}+1}\bigg),\cdots,\Theta_{n-1}\bigg).
\]
In general, 
by~\eqref{eq:varphi_and_A} and~\eqref{eq:def_mathcal_Z}, $X_n$ can be expressed as 
\begin{eqnarray}
\label{eq:X_n_Expression}
X_{n} & = &
\sum_{k\in\mathcal{Z}^n_{\tau_{n}+1}}\exp\left(\frac{1}{\alpha}
\sum_{j\in\mathcal{Z}^n_{k+1}}\log\left(\Lambda_{j}U_{j}\right)\right)\Lambda_{k}^{\frac{1}{\alpha}}\left(1-U_{k}\right)^{\frac{1}{\alpha}}S_{k}\\
\nonumber
& + &
1_{\{\tau_{n}>-\infty\}}\exp\left(\frac{1}{\alpha}\sum_{j\in\mathcal{Z}^n_{\tau_{n}+1}}\log\left(\Lambda_{j}U_{j}\right)\right)W_{\tau_{n}}^{\frac{1}{\alpha\rho}}\left(1-U_{\tau_{n}}\right)^{\frac{1}{\alpha}}S_{\tau_{n}},
\end{eqnarray}
where
sums over empty sets in~\eqref{eq:X_n_Expression} are defined to be equal to zero 
and, if $\tau_{n}=-\infty$, 
we define
$\mathcal{Z}^n_{\tau_{n}+1} =\mathcal{Z}^n$.
A termwise comparison then yields
\begin{eqnarray}
\nonumber
X_{n} & \leq & \sum_{k\in\mathcal{Z}^n}e^{C_{k+1}-C_{n}-\left(n-1-k\right)d}\left(1-U_{k}\right)^{\frac{1}{\alpha}}S_{k}\\
 & \leq & e^{R_{n}}\sum_{k\in\mathcal{Z}^n}e^{-\left(n-1-k\right)d}\left(1-U_{k}\right)^{\frac{1}{\alpha}}S_{k}\quad\text{for all \ensuremath{n\in\mathbb{Z}} a.s.}
\label{eq:First_bound_X_n}
\end{eqnarray}
Recall that $S_{k}\left(1-I_{k}^{n}\right)\leq e^{\delta\left(n-1-k\right)}\left(1-I_{k}^{n}\right)$
for all $k\in\mathcal{Z}^n$.
Since $I_{k}^{n}=0$ for $k<\chi_{n}$, we get 
\begin{eqnarray}
\nonumber
  \sum_{k\in\mathcal{Z}^n}e^{-\left(n-1-k\right)d}\left(1-U_{k}\right)^{\frac{1}{\alpha}}S_{k}
 & \leq & \sum_{k\in\mathcal{Z}^{\chi_n}}e^{-\left(n-1-k\right)\left(d-\delta\right)}\left(1-U_{k}\right)^{\frac{1}{\alpha}}
+\sum_{k\in\mathcal{Z}^n_{\chi_{n}}}e^{-\left(n-1-k\right)d}\left(1-U_{k}\right)^{\frac{1}{\alpha}}S_{k}\\
 & \leq & \frac{e^{\left(\chi_{n}-n\right)\left(d-\delta\right)}}{1-e^{\delta-d}}+\sum_{k\in\mathcal{Z}^n_{\chi_{n}}}e^{-\left(n-1-k\right)d}\left(1-U_{k}\right)^{\frac{1}{\alpha}}S_{k}.
\label{eq:Second_bound_X_n}
\end{eqnarray}
The inequalities in~\eqref{eq:First_bound_X_n}--\eqref{eq:Second_bound_X_n} and the definition in~\eqref{eq:DominatingProcess}
imply
$X_{n}\leq D_{n}$ for all $n\in\mathbb{Z}$
a.s.

\noindent (b) Note that $C_k-C_n=\sum_{i=k}^{n-1}F_i$ for all $k\in\mathcal{Z}^n$. Hence $R_n=\sup\{C_k-C_n:k\in\mathcal{Z}^{n+1}\}$ and $F_n$
are independent and the Markov property for $\left\{ R_{n}\right\}_{n\in\mathbb{Z}}$ follows from 
\[
R_{n}=\max\left\{ \sup_{k\in\mathcal{Z}^{n}}C_{k}-C_{n},0\right\} =\max\left\{ R_{n-1}+F_{n-1},0\right\} .
\]

By~\eqref{eq:DoubleDominatingProcess} we have
$D_{n}^{\prime\prime}=S_{n-1}+e^{-d}D_{n-1}^{\prime\prime}$.
Hence the pair $\left(R_{n},D_{n}^{\prime}\right)$ is a function
of the vector $\Xi_{n-1}=(\Theta_{n-1},R_{n-1},D_{n-1}^{\prime})$ 
(recall that $S_{n-1}$ is the first component of the random vector 
$\Theta_{n-1}$). Since the random elements $\Xi_{n-1}$ and $\Theta_n$ 
are independent, the process $\left\{\Xi_n\right\}_{n\in\mathbb{Z}}$
is Markov.  

The vector $\Xi_n=\left(\Theta_{n},R_{n},D_{n}^{\prime}\right)$
is in a bijective correspondence with $\left(\Theta_{n},R_{n},D_{n}^{\prime\prime}\right)$.

Since $\left\{ \Theta_{n}\right\}_{n\in\mathbb{Z}}$ are i.i.d., 
the following equality in law holds
\begin{eqnarray*}
\left(R_{n+1},D_{n+1}^{\prime\prime}\right)  =  
  \left(\sup_{j\in\mathcal{Z}^1}\sum_{k\in\mathcal{Z}^1_j} F_{n+k} ,\sum_{k\in\mathcal{Z}^1}e^{kd}S_{n+k}\right)
 & \overset{d}{=} & \left(\sup_{j\in\mathcal{Z}^1}\sum_{k\in\mathcal{Z}^1_j} F_{k} ,\sum_{k\in\mathcal{Z}^1}e^{kd}S_{k}\right),
\end{eqnarray*}
implying the stationarity of  $\left\{ \left(\Theta_{n},R_{n},D_{n}^{\prime\prime}\right)\right\}_{n\in\mathbb{Z}}$
and hence of $R$ and $\Xi$.

The process 
$R$ 
can jump to $0$ in a single step and has positive jumps of size at most $1/(\alpha\rho)-d$, both
with positive probability. Hence it will hit any subinterval of its state space $[0,\infty)$ from any starting point in a finite number of steps 
with positive probability, making it $\varphi$-irreducible~\citep[p.~82]{MR2509253} with respect to its invariant law.

Since $\Theta_n$ is independent of 
$(R_n,D_n'')$, 
the $\varphi$-irreducibility of $\left\{\Xi_{n}\right\}_{n\in\mathbb{Z}}$ follows 
if, starting from an arbitrary point, we can prove that the process 
$\{(R_n,D_n'')\}_{n\in\mathbb{Z}}$ 
hits any rectangle in the product $[0,\infty)\times (0,\infty)$
with positive probability. 
Since we already know that   
$R$ hits intervals 
and has (arbitrarily) small positive jumps with positive probability, 
the independence of 
$\{D''_n\}_{n\in\mathbb{Z}}$
and 
$R$,
together with the fact that 
$D_n''$ has a positive density, imply the final statement of the lemma.
\end{proof}

\begin{proof}[Proof of Theorem~\ref{thm:FiniteExpectedTerminationTime}]
\label{proof:FiniteExpectedTerminationTime} 
By Lemma~\ref{lem:recurrenceDomination}(ii), 
$\Xi$
is
$\pi$-irreducible, 
where
$\pi$ 
denotes the invariant law of $\Xi$. 
Hence, by~\citep[Prop.~10.1.1]{MR2509253}, 
$\Xi$ is recurrent, meaning that the expected number of visits of the chain 
$\Xi$ to any set charged by $\pi$ is infinite for all starting points. 
By~\citep[Thm~9.0.1]{MR2509253}, the chain $\Xi$ is Harris recurrent on a complement
of a $\pi$-null set. Put differently, for any starting point, the number of visits $\Xi$ makes to any set charged by
$\pi$ is infinite almost surely. 

Consider the Markov chain $\Psi=\{\Psi_n\}_{n\in\mathbb{N}}$, where $\mathbb{N}=\{0,1,\ldots\}$ and
$\Psi_n=\Xi_{-n}$.
In the language of~\cite{MR758799}, $\Psi$
is a chain dual to $\Xi$ with respect to $\pi$. 
In particular, the invariant law of $\Psi$ equals $\pi$.
Since $\Xi$ is Harris recurrent on a state space with a countably generated
$\sigma$-algebra, \citep[Thm~8.1.1]{MR758799} implies that there exists a modification of $\Psi$
(again denoted by $\Psi$) that is also Harris recurrent.
Since 
$\mathbb{P}\left(a\left(\Theta_{-n}\right)\geq D_{-n}^{\prime}\right)>0$
for any $n\in\mathbb{N}$,
it follows that the $\Psi$-stopping time  
$\sigma^{\prime}=\inf\left\{ n>0:a\left(\Theta_{-n}\right)\geq D_{-n}^{\prime}\right\}$
is finite almost surely. Moreover, by~\citep[Thm~11.1.4]{MR2509253} we have $\mathbb{E} [\sigma'|\Psi_0]<\infty$ almost surely.

Recall that $\sigma=\inf\left\{ n>0:a\left(\Theta_{-n}\right)\geq D_{-n}\right\}$ 
is the number of steps taken backwards in time in Algorithm~\ref{alg:Algorithm 1}. 
By Lemma~\ref{lem:recurrenceDomination}(i) we have $\sigma\leq \sigma'$. Since, by definition $\Psi_0=\Xi_0$, the claim follows. 
\end{proof}

\section{Backward Simulation of $\left\{ \left(D_{n},\Theta_{n}\right)\right\} _{n\in\mathbb{Z}}$}
\label{sec:Backward_Sim}

A key step in Algorithm~\ref{alg:Algorithm 1} consists of simulating the process 
$\{(D_n,\Theta_n)\}_{n\in\mathbb{Z}}$ backwards in time until the random time
$\sigma=\inf\left\{ n>0:a(\Theta_{-n})\geq D_{-n}\right\}$ 
(see~\eqref{eq:def_mathcal_Z} and~\eqref{eq:varphi_and_a} for the definitions of 
$\mathcal{Z}^1$ and $a(\theta)$, respectively). 
The forthcoming Algorithm 2 is responsible for this step. Recall that 
$\{\Theta_n\}_{n\in\mathbb{Z}}$ is an i.i.d. sequence with 
$\Theta_n=(S_n,U_n,W_n,\Lambda_n)$ having independent components, 
where $S_n$, $U_n$ and $\Lambda_n$ are distributed as in 
Theorem~\ref{thm:SecondPereq} and $W_n\sim U(0,1)$.

At time 
$n\in\mathbb{Z}$,
the dominating process $D$ in~\eqref{eq:DominatingProcess}
depends on three components:  the sequence 
$\left(\chi_{n},\left\{ S_{k}\right\} _{k\in\mathcal{Z}_{\chi_{n}}^{0}}\right)$, 
 the all-time maximum $\sup_{k\in\mathcal{Z}^{n+1}} \{C_k\}$ and $C_n$ (via the reflected process $R$, see~\eqref{eq:RW}-\eqref{eq:R})
and  the uniform random variables  $\left\{ U_{k}\right\} _{k\in\mathcal{Z}_{\chi_{n}}^{0}}$. 
The time $\chi_n$ in~\eqref{eq:chi} is the last time before $n$ the random variables $\left\{ S_{k}\right\}_{k\in\mathcal{Z}^{0}}$
exceed a certain adaptive exponential bound.  
Algorithm~\ref{alg:Algorithm 3} for sampling 
$\left(\chi_{n},\left\{ S_{k}\right\} _{k\in\mathcal{Z}_{\chi_{n}}^{0}}\right)$ 
is given in Section~\ref{subsec:StableInc} below.
A sample for $(R_n,C_n)$ requires the joint forward simulation of the dual random walk $-C$ and its ultimate maximum. 
This problem was solved in~\citep{MR2865624}.
The algorithm in~\citep{MR2865624},
stated for completeness
as Algorithm~\ref{alg:Algorithm 5}
of
Section~\ref{subsec:BlanchetSigman} below 
for the random walk $C$ in~\eqref{eq:RW}, 
requires the simulation of the walk under the
exponential change of measure. 

Since the increments of $C$ are shifted negative exponential random variables under the original measure, 
they remain in the same class under the exponential change of measure, making the simulation in 
Algorithm~\ref{alg:Algorithm 5} simple. 
Finally, heaving simulated $(R,C)$ backwards in time, we need to recover the random variables 
$\Lambda_{k}$ and $U_{k}$,
conditional on the values of increments $F_k=d+(1/\alpha) \log(U_n \Lambda_n)$ we have observed. 
Algorithm~\ref{alg:Algorithm 2-1}
in Section~\ref{subsec:ULambda_given_F} below describes this step.

\begin{algorithm}
\caption{Backward simulation of $\left(\sigma,\left\{ \left(D_{n},\Theta_{n}\right)\right\} _{n\in\mathcal{Z}_{-\sigma}^{0}}\right)$}
\label{alg:Algorithm 2} 
\begin{algorithmic}[1]
	\State{Sample $\chi_{-1}$ and $\{S_k\}_{k\in \mathcal{Z}^0_{\chi_{-1}}}$}
		\Comment{Algorithm~\ref{alg:Algorithm 3}}
	\State{Sample $\{(R_k,C_k,\Lambda_k,U_k)\}_{k\in \mathcal{Z}^0_{N_{-1}}}$ for some $N_{-1}\leq \chi_{-1}$}
			\Comment{Algorithms~\ref{alg:Algorithm 5} \& \ref{alg:Algorithm 2-1}}
	\State{Bundle up $\{\Theta_k\}_{k\in \mathcal{Z}^0_{\chi_{-1}}}$ and compute $D_{-1}$}
	\State{Put $n:=-1$}
	\While{$D_n>a(\Theta_n)$}
		\State{Put $n:=n-1$}
		\State{Sample $\chi_{n}$ and $\{S_k\}_{k\in \mathcal{Z}^{\chi_{n+1}}_{\chi_n}}$ conditional on $(\chi_{n+1},\{S_k\}_{k\in \mathcal{Z}^{\chi_{n+1}}_{\chi_n}})$}
		\Comment{Algorithm~\ref{alg:Algorithm 3}}
		\State{Sample $\{(R_k,C_k,\Lambda_k,U_k)\}_{k\in \mathcal{Z}^{N_{n+1}}_{N_n}}$ for some $N_n\leq \chi_n$}
			\Comment{Algorithms~\ref{alg:Algorithm 5} \& \ref{alg:Algorithm 2-1}}
		\State{Bundle up $\{\Theta_k\}_{k\in \mathcal{Z}^{\chi_{n+1}}_{\chi_n}}$, and compute $D_n$}
	\EndWhile
	\State{Put $\sigma = -n$}
	\State{\Return $(\sigma,\{ \Theta_{k}\} _{k\in \mathcal{Z}^0_{-\sigma}})$}
\end{algorithmic}
\end{algorithm}
The number of steps $N_{-1}$ (resp. $N_{n}$) in line 2 (resp. 8) of Algorithm~\ref{alg:Algorithm 2}
is random since Algorithm~\ref{alg:Algorithm 5}, which outputs the all-time maximum of the random walk,
may need more values of the random walk  than required to recover the previous value of the dominating process $D_{-1}$
(resp. $D_{n}$).\footnote{In the notation of Section~\ref{subsec:BlanchetSigman} below, the integers $N_{n}$ take the form $\Delta\left(\tau_{m}\right)$.}
The running time of Algorithm~\ref{alg:Algorithm 3} is random but has moments of all orders (see Lemma~\ref{lem:finiteTerminationSubroutines}
in Section~\ref{subsec:StableInc} below). Algorithm~\ref{alg:Algorithm 2-1} executes a loop of length equal to the number of steps in 
the random walk $C$ the algorithm is applied to, with each step sampling one Poisson and one 
Beta random variables (see Section~\ref{subsec:ULambda_given_F}
below). Hence both Algorithms~\ref{alg:Algorithm 3} and~\ref{alg:Algorithm 2-1} are fast (see Section~\ref{sec:numerics}). 
Algorithm~\ref{alg:Algorithm 5} of~\citep{MR2865624} (see Section~\ref{subsec:BlanchetSigman} below) 
runs sequentially Algorithms~\ref{alg:SubAlgorithm 5-1},~\ref{alg:SubAlgorithm 5-2} 
and~\ref{alg:SubAlgorithm 5-3}. Each of these algorithms is reliant on rejection sampling and 
has a finite expected running time, which is easy to quantify in terms  of the increments of the walk $C$. 

\subsection{\label{subsec:StableInc}Simulation of $\left(\chi_{n},\left\{ S_{k}\right\} _{k\in\mathcal{Z}_{\chi_{n}}^{0}}\right)$}
Consider independent Bernoulli random variables $\{J_{n}\} _{n=1}^\infty$
with computable $p_{n}=\mathbb{P}(J_{n}=0)$, $n\geq 1$, satisfying
$\sum_{n=1}^\infty(1-p_{n})<\infty$.
By the Borel-Cantelli Lemma the random time
$\tau=\sup\{n\geq0:J_n=1\}^+$ (with convention $\sup\emptyset =-\infty$)
satisfies $\tau\in\mathbb{N}$ a.s. Clearly, $J_{n}=0$ for all $n>\tau$, 
and $\{\tau<n\} =\bigcap_{k=n}^{\infty}\{J_{k}=0\}$
implies $\mathbb{P}(\tau<n)=\prod_{k=n}^{\infty}p_{k}$.
If there exists $n^{\ast}\geq1$ such that for all $n\geq n^{\ast}$
we have a positive computable lower bound $q_{n}\leq\prod_{k=n}^{\infty}p_{k}$,
then we can simulate $(\tau,\{J_k\}_{k\in\{0,\ldots,\tau\}})$ as follows. 

Define the auxiliary function $F:\left(0,1\right)\times\left(0,1\right)\to\left\{ 0,1\right\} \times\left(0,1\right)$
by the formula
\[
F\left(u,p\right)=\begin{cases}
\left(0,\frac{u}{p}\right) & \text{ if }u\leq p,\\
\left(1,\frac{u-p}{1-p}\right) & \text{ if }u>p.
\end{cases}
\]
The following observation is simple but crucial: 
for any $p\in(0,1)$ and $U\sim U\left(0,1\right)$,
the components of the vector 
$\left(J,V\right)=F\left(U,p\right)$ 
are independent, $J$ is Bernoulli with 
$\mathbb{P}(J=0)=p$ and $V\sim U\left(0,1\right)$. 

Sample $\left\{ J_{n}\right\} _{n\in\mathcal{Z}_1^{n^\ast}}$ and
an independent $U^{\left(n^{\ast}\right)}\sim U\left(0,1\right)$.
Let $\left(J_{n^\ast},U^{\left(n^\ast+1\right)}\right)=F\left(U^{\left(n^\ast\right)},p_{n^\ast}\right)$.
Hence
$J_{n^{\ast}}$ has the correct distribution and is independent of 
$U^{\left(n^{\ast}+1\right)}\sim U(0,1)$.
Thus,
$J_{n^{\ast}}$ is independent of 
$F\left(U^{\left(n^{\ast}+1\right)},p_{n^{\ast}+1}\right)=\left(J_{n^{\ast}+1},U^{\left(n^{\ast}+2\right)}\right)$.
Define recursively
$\left(J_{n},U^{\left(n+1\right)}\right)=F\left(U^{\left(n\right)},p_{n}\right)$
for $n\geq n^\ast+2$ and note that 
the sequence $\left\{ J_{n}\right\}_{n\in\mathbb{N}}$
of Bernoulli random variables is i.i.d. 
Moreover, the sequence $\{U^{(n)}\}_{n\geq n^\ast}$ 
detects the value of $\tau$ since  
$\left\{ U^{\left(n\right)}\leq q_{n}\right\} 
\subseteq \left\{ U^{\left(n\right)}\leq\prod_{k=n}^{\infty}p_{k}\right\} 
=\left\{ \tau<n\right\}$.

\begin{algorithm}
	\caption{Simulation of $(\tau,\{J_k\}_{k\in\{1,\ldots,\tau\}})$}\label{alg:Algorithm 3} 
\begin{algorithmic}[1]
	\State{Sample $J_1,\ldots, J_{n^\ast-1}$ and put $n:=n^\ast-1$}
	\State{Sample $U\sim U(0,1)$}
	\Loop
		\State{Put $n := n+1$}
		\If{$U>p_n$}
			\State{Put $J_n:=1$ and update $U:=\frac{U-p_n}{1-p_n}$}
		\ElsIf{$U \leq q_n$}
			\State{Compute $\tau$ from $J_1,\ldots,J_{n-1}$ and \bf{exit loop}}
		\Else
			\State{Put $J_n:=0$ and update $U:=\frac{U}{p_n}$}
		\EndIf
	\EndLoop
	\State{\Return $(\tau,\{J_k\}_{k\in\{1,\ldots,\tau\}})$}
\end{algorithmic}\end{algorithm}

Algorithm~\ref{alg:Algorithm 3} samples a single uniform random variable and performs a binary search. 
Its running time 
$\varsigma=\inf\left\{ n\geq n^{\ast}:U^{\left(n\right)}\leq q_{n}\right\}\geq\tau+1$ 
(with convention $\inf\emptyset =\infty$)
has the following properties.
\begin{lem}
\label{lem:finiteTerminationSubroutines}
{\normalfont(a)} If $\lim_{n\to\infty}q_{n}=1$ then $\mathbb{P}(\varsigma<\infty)=1$.\\
{\normalfont(b)} If $\sum_{n=n^{\ast}}^{\infty}\left(1-q_{n}\right)<\infty$ then
$\mathbb{E} \varsigma<\infty$.\\
{\normalfont(c)} If $\sum_{n=n^{\ast}}^{\infty}\left(1-q_{n}\right)e^{tn}<\infty$
for some $t>0$, then $\mathbb{E} e^{t\varsigma}<\infty$.\\
{\normalfont(d)} If $q_n p_{n-1}\geq q_{n-1}$ for $n>n^\ast$, then the converses of (a), (b) and (c) are also true.
\end{lem}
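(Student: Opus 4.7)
The plan is to exploit the marginal distribution of each $U^{(n)}$ together with the inclusion $\{\varsigma>n\}\subseteq \{U^{(n)}>q_{n}\}$, which is immediate from the definition of $\varsigma$. Since the map $F(\cdot,p)$ turns a uniform input into an independent pair (a Bernoulli with parameter $1-p$ and a fresh uniform), a simple induction on the recursion $(J_{n},U^{(n+1)})=F(U^{(n)},p_{n})$ shows that $U^{(n)}\sim U(0,1)$ for every $n\geq n^{\ast}$, yielding the fundamental bound
\[
\mathbb{P}(\varsigma>n)\leq 1-q_{n},\qquad n\geq n^{\ast}.
\]

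Parts (a)--(c) will follow directly. For (a), $\mathbb{P}(\varsigma=\infty)=\lim_{n\to\infty}\mathbb{P}(\varsigma>n)\leq \lim_{n\to\infty}(1-q_{n})=0$. For (b), the standard tail-sum formula gives $\mathbb{E}\varsigma=\sum_{n\geq 0}\mathbb{P}(\varsigma>n)\leq n^{\ast}+\sum_{n\geq n^{\ast}}(1-q_{n})<\infty$. For (c), a truncated Abel summation applied to $\sum_{n=n^{\ast}}^{N}e^{tn}\mathbb{P}(\varsigma=n)$ (with the boundary term $-e^{tN}\mathbb{P}(\varsigma>N)$ discarded as it is non-positive) followed by passage to the limit $N\to\infty$ yields the bound $\mathbb{E} e^{t\varsigma}\leq e^{tn^{\ast}}+(e^{t}-1)\sum_{n\geq n^{\ast}}e^{tn}(1-q_{n})<\infty$ under the hypothesis.

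The main obstacle is (d), which requires matching lower bounds for $\mathbb{P}(\varsigma>n)$. The plan is to upgrade the inclusion of the first paragraph to an equality $\{\varsigma\leq n\}=\{U^{(n)}\leq q_{n}\}$ under the hypothesis $q_{n}p_{n-1}\geq q_{n-1}$. The key step is to show that once $U^{(k)}\leq q_{k}$ occurs for some $k\geq n^{\ast}$, the property propagates forward: since $q_{k}\leq \prod_{j\geq k}p_{j}\leq p_{k}$, the inequality $U^{(k)}\leq q_{k}\leq p_{k}$ forces $J_{k}=0$, hence $U^{(k+1)}=U^{(k)}/p_{k}\leq q_{k}/p_{k}\leq q_{k+1}$ by the hypothesis, and induction extends the property to all later indices. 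This yields the exact identity $\mathbb{P}(\varsigma>n)=1-q_{n}$ for $n\geq n^{\ast}$, from which the converses of (a) and (b) are immediate. The converse of (c) requires one further observation: if $\mathbb{E} e^{t\varsigma}<\infty$ then $e^{tN}(1-q_{N})=\sum_{n>N}e^{tN}\mathbb{P}(\varsigma=n)\leq \sum_{n>N}e^{tn}\mathbb{P}(\varsigma=n)\to 0$, so the boundary term in the Abel summation vanishes in the limit and the previous bound becomes the identity $\mathbb{E} e^{t\varsigma}=e^{tn^{\ast}}+(e^{t}-1)\sum_{n\geq n^{\ast}}e^{tn}(1-q_{n})$, forcing $\sum_{n\geq n^{\ast}}e^{tn}(1-q_{n})<\infty$.
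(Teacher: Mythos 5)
Your proof is correct. Parts (a)--(c) follow the paper's own argument essentially verbatim: the inclusion $\{\varsigma\le n\}\supseteq\{U^{(n)}\le q_n\}$ together with the marginal uniformity of $U^{(n)}$ gives $\mathbb{P}(\varsigma>n)\le 1-q_n$, and the tail-sum and Abel-summation identities do the rest. Where you genuinely diverge is in (d). The paper establishes the key identity $\mathbb{P}(\varsigma\le n)=q_n$ by partitioning on the value of $(\tau+1)\vee n^\ast$ and computing, for each $k$, the exact subinterval of values of $U^{(k-1)}$ (or $U^{(n^\ast)}$) realising $\{(\tau+1)\vee n^\ast=k,\ \varsigma\le n\}$, then summing a telescoping series of interval lengths. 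You instead prove a forward-propagation (monotonicity) statement: since $q_k\le\prod_{j\ge k}p_j\le p_k$, the event $\{U^{(k)}\le q_k\}$ forces $J_k=0$ and $U^{(k+1)}=U^{(k)}/p_k\le q_k/p_k\le q_{k+1}$ by the hypothesis, so the events $\{U^{(k)}\le q_k\}$ are nested increasing in $k$ and the union defining $\{\varsigma\le n\}$ collapses to $\{U^{(n)}\le q_n\}$, whence $\mathbb{P}(\varsigma\le n)=q_n$ directly from $U^{(n)}\sim U(0,1)$. Your route is shorter and makes the role of the hypothesis $q_np_{n-1}\ge q_{n-1}$ more transparent (it is exactly what makes the detection events nested), at the cost of not exhibiting the joint law of $(\tau,\varsigma)$ that the paper's decomposition incidentally provides. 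Your handling of the converse of (c) via the vanishing boundary term is also fine; one could shortcut it by noting that the identity $\mathbb{E} e^{t\varsigma}=1+(e^t-1)\sum_m e^{tm}\mathbb{P}(\varsigma>m)$ holds in $[0,\infty]$ by Tonelli, so finiteness of either side is equivalent once $\mathbb{P}(\varsigma>m)=1-q_m$ is known exactly.
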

\begin{rem}
	At the cost of additional operations, one may always 
	construct a sequence $\{q_n^\prime\}_{n=n^\ast}^\infty$ that 
	satisfies (d). Indeed, let $q_{n^\ast}^\prime=q_{n^\ast}$ and 
	define recursively 
	$q_n^\prime = \max\{q_n,q^\prime_{n-1}/p_{n-1}\}$ for $n> n^\ast$,
	then these satisfy condition (d), are computable and inductively satisfy $q_{n}^\prime\leq \prod_{k=n}^\infty p_k$ for $n\geq n^\ast$.
	This consideration shows that our conditions are sharp.
\end{rem}
\begin{proof}
(a) For all $n\geq n^{\ast}$ we have $\left\{ \varsigma\leq n\right\} 
\supseteq\left\{ U^{\left(n\right)}\leq q_{n}\right\} $,
then $\mathbb{P}(\varsigma>n)\leq\mathbb{P}(U^{(n)}>q_n)=1-q_{n}$.
Hence $\mathbb{P}(\varsigma=\infty)=\lim_{n\to\infty}\mathbb{P}(\varsigma>n)
\leq\lim_{n\to\infty}(1-q_{n})=0$
and the sufficiency follows.
\\(b) Similarly, 
$\mathbb{E} \varsigma=\sum_{n=0}^{\infty}\mathbb{P}\left(\varsigma>n\right)
\leq n^{\ast}+\sum_{n=n^{\ast}}^{\infty}\left(1-q_{n}\right)<\infty$
and the claim follows.
\\(c) Note that $\left(e^{t}-1\right)\sum_{m=0}^{n-1}e^{tm}=e^{tn}-1$.
Exchanging the order of summation in the third equality of the following estimate
implies
(c): 
\begin{eqnarray*}
\mathbb{E} e^{t\varsigma} & = & \sum_{n=0}^{\infty}\mathbb{P}\left(\varsigma=n\right)e^{tn}  
=  \sum_{n=0}^{\infty}\mathbb{P}\left(\varsigma=n\right)\left(1+\left(e^{t}-1\right)\sum_{m=0}^{n-1}e^{tm}\right)\\
 & = & 1+\left(e^{t}-1\right)\sum_{m=0}^{\infty}e^{tm}\mathbb{P}\left(\varsigma>m\right)
  \leq  e^{tn^{\ast}}+\left(e^{t}-1\right)\sum_{n=n^{\ast}}^{\infty}\left(1-q_{n}\right)e^{tn}<\infty.
\end{eqnarray*}
(d) Condition (d) and the relation 
$(\tau+1)\vee n^\ast=\inf\{k\geq n^\ast:U^{(k)}\leq \prod_{j=k}^\infty p_j\}$
imply for $n\geq k\geq n^\ast$,
\[
\{(\tau+1)\vee n^\ast = k, \varsigma\leq n\} =
\begin{cases}
\left\{ U^{(k-1)}\in 
\bigg[ p_{k-1},p_{k-1}+(1-p_{k-1})q_n
\prod_{j\in\mathcal{Z}_{k}^n}p_j\bigg]\right\} & k>n^\ast,\\
\left\{ U^{(n^\ast)}\in \bigg[ 0,q_n\prod_{j\in\mathcal{Z}_{n^\ast}^n}p_j\bigg]\right\}& k=n^\ast.
\end{cases}
\]
Thus, a simple calculation yields
\[
\mathbb{P}(\varsigma \leq n)
=q_n\prod_{j\in\mathcal{Z}_{n^\ast}^n}p_j+\sum_{k\in\mathcal{Z}_{n^\ast}^{n}}q_n(1-p_k)\prod_{j\in\mathcal{Z}_{k+1}^n}p_j
=q_{n},
\]
and the result follows from standard probability theory.
\end{proof}

In Algorithm~\ref{alg:Algorithm 2} we are required to sample 
$\left(\chi_{0},\left\{ S_{k}\right\} _{\mathcal{Z}_{\chi_{0}}^{0}}\right)$,
and then, iteratively for $n\in\mathcal{Z}^0$, $\chi_{n}$ and the remaining 
$\left\{ S_{k}\right\} _{k\in\mathcal{Z}_{\chi_{n}}^{\chi_{n+1}}}$,
given the known values $\left(\chi_{n+1},\left\{ S_{k}\right\} _{k\in\mathcal{Z}_{\chi_{n+1}}^{0}}\right)$.
To apply Algorithm~\ref{alg:Algorithm 3}, we need a computable lower bound 
on the product of probabilities $p\left(m\right)=\mathbb{P}(S_1\leq e^{\delta m})$, $m\in\mathbb{N}$. 
Recall the exponential lower bound on 
$p\left(m\right)$ in~\eqref{eq:probTails} and define
$m^{\ast}=\left\lfloor \frac{1}{\delta\gamma}\log\mathbb{E} S_1^{\gamma}\right\rfloor^+ +1$
(here $\lfloor x\rfloor=\sup\{n\in\mathbb{Z}:n\leq x\}$ for any $x\in\mathbb{R}$).
Note that for any $m\geq m^{\ast}$ we have $e^{-\delta\gamma m}\mathbb{E} S_1^{\gamma}<1$
and may hence define 
$\overline{p}\left(m\right) = 
 \exp\left(-\frac{1}{1-e^{-\delta\gamma}}\frac{e^{-\delta\gamma m}
 \mathbb{E} S_{1}^{\gamma}}{1-e^{-\delta\gamma m}\mathbb{E} S_{1}^{\gamma}}\right)\in (0,1)$.
The inequality in~\eqref{eq:probTails} implies
\begin{eqnarray*}
\prod_{j=m}^{\infty}p\left(j\right) 
& \geq & \prod_{j=m}^{\infty}\left(1-e^{-\delta\gamma j}\mathbb{E} S_{1}^{\gamma}\right)
=\exp\left(\sum_{j=m}^{\infty}\log\left(1-e^{-\delta\gamma j}\mathbb{E} S_{1}^{\gamma}\right)\right)\\
 & = &
\exp\left(-\sum_{j=m}^{\infty}\sum_{k=1}^{\infty}\frac{1}{k}e^{-\delta\gamma
jk}\left(\mathbb{E} S_{1}^{\gamma}\right)^{k}\right)\geq\exp\left(-\sum_{k=1}^{\infty}\frac{e^{-\delta\gamma
mk}\left(\mathbb{E} S_{1}^{\gamma}\right)^{k}}{1-e^{-\delta\gamma
k}}\right)\geq  \overline{p}\left(m\right).
\end{eqnarray*}
Since for any $k\in\mathcal{Z}^{0}_{\chi_0}$ we have 
$\mathbb{P}(I^0_k=0) = \mathbb{P}(S_k\leq e^{-(k+1)\delta})=p(-(k+1))$,
Algorithm~\ref{alg:Algorithm 3} can be applied (with $n^\ast=m^\ast$) to sample the sequence 
$\left\{ I_{k}^{0}\right\}_{k\in\mathcal{Z}^{0}_{\chi_0}}$. 
Moreover, for $m\in\mathbb{N}$ we get 
\begin{eqnarray*}
\overline{p}\left(m^{\ast}+m\right) & \geq & \exp\left(-re^{-\delta\gamma m}\right)\geq1-re^{-\delta\gamma m},
\quad
\text{where}\quad r  =  \frac{e^{-\delta\gamma m^{\ast}}\mathbb{E} S_{1}^{\gamma}}{\left(1-e^{-\delta\gamma}\right)\left(1-e^{-\delta\gamma m^{\ast}}\mathbb{E} S_{1}^{\gamma}\right)}>0.
\end{eqnarray*}
Hence, 
for any $t\in(0,\delta\gamma)$,
Lemma~\ref{lem:finiteTerminationSubroutines}(c) implies that the running time $\varsigma$ satisfies $\mathbb{E} [e^{\varsigma t}]<\infty$
and therefore possesses moments of all orders. 
Having obtained $\left(\chi_{0},\left\{ I_{k}^{0}\right\}_{k\in\mathcal{Z}^{0}_{\chi_0}}\right)$,
for $k\in\mathcal{Z}^{0}_{\chi_0}$, we  
sample $S_{k}$ as $S^{+}\left(\alpha,\rho\right)$ conditional on 
$S_{k}\leq e^{-\delta\left(k+1\right)}$
(if $I_{k}^{0}=0$) or $S_{k}>e^{-\delta\left(k+1\right)}$ (if
$I_{k}^{0}=1$),
yielding a sample of 
$\left(\chi_{n},\left\{ S_{k}\right\} _{k\in\mathcal{Z}_{\chi_{n}}^{0}}\right)$.

Assume now that we have already sampled 
$\left(\chi_{n+1},\left\{ S_{k}\right\} _{k\in\mathcal{Z}_{\chi_{n+1}}^{0}}\right)$.
The adaptive exponential bounds in the indicators $I^{n+1}_k$ and $I^n_k$ are different 
(see Figure~\ref{fig:adaptive_bound}) 
and the relevant probabilities take the form
\[
p^{\prime}\left(m\right)=\mathbb{P}\left(\left.S_{1}\leq e^{\delta m} \right|S_{1}\leq e^{\delta\left(m+1\right)}\right),\quad m\in\mathbb{N}.
\]
Since 
$\{S_{1}\leq e^{\delta m}\}
\subset 
\{S_{1}\leq e^{\delta (m+1)}\}$,
the inequality 
$p^{\prime}\left(m\right)\geq p\left(m\right)$
holds for any $m\in\mathbb{N}$. 
Thus
\begin{eqnarray*}
\prod_{j=m}^{\infty}p^{\prime}\left(j\right)\geq\prod_{j=m}^{\infty}p\left(j\right) & \geq & \overline{p}\left(m\right)
\end{eqnarray*}
and Algorithm~\ref{alg:Algorithm 3} can be applied with $n^\ast=\max\{m^\ast,n-\chi_{n+1}\}$.
The same argument as above shows that the running time $\varsigma$ has moments of all orders.

\begin{figure}[H]
\begin{centering}
\par\end{centering}
\centering{}\begin{tikzpicture} 
	\begin{axis} 
		[ymin=0,
		xlabel=$k$, 
		ylabel=$\{S_k\}$, 
		width=12cm,
		height=5cm,
		axis on top=true,
		ytick=\empty,
		xtick=\empty,
		xticklabels=none,
		yticklabels=none,
		axis x line=middle, 
		axis y line=middle, 
		legend pos=north east ] 

		\addplot [line width = 1, smooth, domain=-11.5:0] {exp(-.15*(x+1))};
		\addplot [dashed, line width = 1, smooth, domain=-11.5:0] {exp(-.15*(x+2))};
		\addplot [dotted, line width = 1, smooth, domain=-11.5:0] {exp(-.15*(x+3))};

		\node [fill=blue, circle, scale=0.5] at (axis cs: -2,0) {};
		\node [fill=blue, circle, scale=0.5] at (axis cs: -7,0) {};
		\node [fill=blue, circle, scale=0.5] at (axis cs: -10,0) {};
		\node [above right] at (axis cs: -2,0) {$\chi_0$};
		\node [above right] at (axis cs: -7,0) {$\chi_{-1}$};
		\node [above right] at (axis cs: -10,0) {$\chi_{-2}$};

		\node [fill=red, circle, scale=0.5] at (axis cs: -1,0.19) {};
		\node [fill=red, circle, scale=0.5] at (axis cs: -2,1.93) {}; 
		\node [fill=red, circle, scale=0.5] at (axis cs: -3,1.28) {};
		\node [fill=red, circle, scale=0.5] at (axis cs: -4,0.73) {};
		\node [fill=red, circle, scale=0.5] at (axis cs: -5,0.49) {};
		\node [fill=red, circle, scale=0.5] at (axis cs: -6,1.33) {};
		\node [fill=red, circle, scale=0.5] at (axis cs: -7,2.23) {}; 
		\node [fill=red, circle, scale=0.5] at (axis cs: -8,1.52) {};
		\node [fill=red, circle, scale=0.5] at (axis cs: -9,0.43) {};
		\node [fill=red, circle, scale=0.5] at (axis cs: -10,3.07) {};
		\node [fill=red, circle, scale=0.5] at (axis cs: -11,0.87) {};

		\node [below right] at (axis cs: -2,1.93) {$S_{-2}$};
		\node [above right] at (axis cs: -7,2.23) {$S_{-7}$};
		\node [below left] at (axis cs: -10,3.07) {$S_{-10}$};

		\legend {$k\mapsto e^{-\delta(k+1)}$,
				$k\mapsto  e^{-\delta(k+2)}$,
				$k\mapsto  e^{-\delta(k+3)}$};
	\end{axis}
\end{tikzpicture}
\caption{\footnotesize
	The adaptive exponential bounds $k\mapsto e^{\delta(n-k-1)}$ for $n\in\{0,-1,-2\}$ 
	with the corresponding stable random variables conditioned to be positive 
	$\{S_k\}_{k\in\mathcal{Z}^0}$ and the times $\{\chi_k\}_{k\in\mathcal{Z}^1}$ used for the 
	construction of the dominating process $\{D_n\}_{n\in\{0,-1,-2\}}$ in~\eqref{eq:DominatingProcess}.
}
\label{fig:adaptive_bound}
\end{figure}
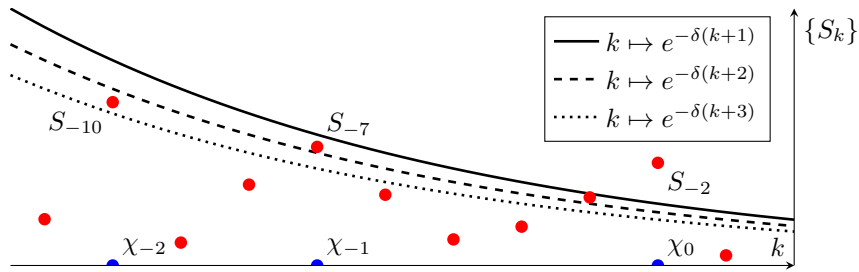

\subsection{\label{subsec:BlanchetSigman}Simulation of the Random Walk and Its Reflected Process from~\citep{MR2865624}}
In this section we present an overview of the algorithm in~\citep{MR2865624}  for the joint simulation of $(C,R)$
defined in~\eqref{eq:RW}-\eqref{eq:R}.
We refer to~\citep{MR2865624} and~\citep{MR1759244} for the proofs (the latter paper contains the simulation algorithm
for the ultimate maximum of a random walk with negative drift and provides a basis for the simulation algorithm in~\citep{MR2865624}).

Let $\eta=\eta(d)$ be the unique positive root of $\psi_{d}(\eta)=0$,
where $\psi_{d}(t)=\log(\mathbb{E} e^{tF_0})=dt-\log(1+t/(\alpha\rho))$.
Note that $\psi_{d}^{\prime}(\eta)=d-\frac{1}{\alpha\rho+\eta}>0$
and $\eta 
=-\alpha\rho-W_{-1}\left(-\alpha\rho de^{-\alpha\rho d}\right)/d$,
where $W_{-1}$ is the secondary branch of the Lambert W function.
Since $\mathbb{E}[\exp(\eta F_n)]=1$ for all $n\in\mathbb{Z}$,
the process $\{ \exp(\eta C_{n})\}_{n\in\mathcal{Z}^{1}}$
is a positive backward martingale started at one, 
thus inducing a probability measure
$\mathbb{P}^{\eta}$
on $\sigma$-algebras
$\sigma\left(C_{k};k\in\mathcal{Z}_{n}^{1}\right)$,
$n\in\mathcal{Z}^{1}$,
by the formula
$\mathbb{P}^{\eta}(A)=\mathbb{E}[1_{A}e^{\eta C_{n}}]$
where $A\in\sigma\left(C_{k};k\in\mathcal{Z}_{n}^{1}\right)$. 
Under $\mathbb{P}^{\eta}$, the process $C$ remains a random walk with i.i.d.
increments satisfying $(\alpha\rho+\eta)(d-F_{n})\sim Exp(1)$.
Hence $\mathbb{E}^{\eta}[C_{-1}]=\psi_{d}^{\prime}\left(\eta\right)>0$, implying
$\lim_{n\to-\infty}C_{n}=\infty$, $\mathbb{P}^\eta$-a.s. by the strong law of large numbers.

For any $k\in\mathbb{Z}$ define (with convention $\sup \emptyset = -\infty$)
\begin{equation}\label{eq:T_x^k}
T_{x}^{k}=\begin{cases}
\sup\left\{ n\in\mathcal{Z}^{k}:C_{n}-C_{k}>x\right\}  & \text{ if }x>0,\\
\sup\left\{ n\in\mathcal{Z}^{k}:C_{n}-C_{k}<x\right\}  & \text{ if }x<0.
\end{cases}
\end{equation}
For ease of notation we let
$T_{x}=T_{x}^{0}$. 
Let $E$ be an independent exponential random variable with mean one. Then, for $x>0$, we have 
$\mathbb{P}\left(R_{0}>x\right)=\mathbb{P}^{\eta}\left(L_{E/\eta}>x\right)$,
where $L_{x}=\inf\left\{ y\geq0:C_{T_{y}}>x\right\} $ is the right
inverse of $x\mapsto C_{T_x}$, 
see e.g.~\citep{MR1759244}.
Hence for $x\in(0,x')$, where $x'\leq\infty$, sampling  $1_{\left\{ R_{0}>x\right\} }=1_{\left\{ T_{x}>-\infty\right\} }$,
conditional on 
$1_{\left\{ R_{0}\leq x'\right\} }=1_{\left\{ T_{x'}=-\infty\right\}}$,
in finite time amounts to sampling $E$ and $C_{-1},\ldots,C_{T_{E/\eta}}$
under $\mathbb{P}^{\eta}$, see Algorithm~\ref{alg:SubAlgorithm 5-1} below.  

\begin{algorithm}
\caption{Simulation of $1_{\left\{ R_{0}>x\right\} }$
conditional on $\left\{ R_{0}\leq x^{\prime}\right\} $}
\label{alg:SubAlgorithm 5-1}
\begin{algorithmic}[1]
	\Require{$\infty \geq x^\prime >x >0$}
	\Loop
		\State{Sample $E\sim Exp(1)$}
		\If{$E/\eta\leq x$}
			\State{\Return 0}
		\Else
			\State{Sample $C_0=0,C_{-1},\ldots,C_{T_{E/\eta}}$ under $\mathbb{P}^\eta$}
			\State{Compute $L_{E/\eta}$}
			\If{$L_{E/\eta}\leq x^\prime$}\Comment{Accept sample}
				\State{\Return $1_{\{L_{E/\eta}>x\}}$}
			\EndIf
		\EndIf
	\EndLoop
\end{algorithmic}
\end{algorithm}
\begin{rem}
	Since $L_x\leq x$, then the condition $E/\eta\leq x$ implies $L_{E/\eta}\leq x$, 
thus identifying $1_{\{L_{E/\eta}>x\}}=0$ (see line~3) and saving the computational effort of running 
all subsequent lines. This algorithm repeats independent experiments 
with success probability $\mathbb{P}^\eta(L_{E/\eta}\leq x^\prime)>0$. The expected runtime of each 
iteration in the loop is bounded above by 
$(\eta^{-1}+d)/\psi'_d(\eta)$, see~\citep[Eq.~(2.3)]{MR1759244}.
Hence the expected running time of Algorithm~\ref{alg:SubAlgorithm 5-1} is finite.
\end{rem}

In Algorithm~\ref{alg:Algorithm 5} below 
we need to sample the path of the random walk $\{C_k\}_{k\in\mathcal{Z}^1_{T_x}}$ conditioned on the event $\left\{ R_{0}\in(x,x')\right\}$,
where $0<x<x'\leq\infty$. By a rejection sampling method under $\mathbb{P}^\eta$  
and Algorithm~\ref{alg:SubAlgorithm 5-1} (see~\citep[Lemma~3]{MR2865624}), this can be achieved as follows.

\begin{algorithm}
\caption{Simulation of $C_{0},\ldots,C_{T_{x}}$
conditional on $\left\{ T_{x}>-\infty=T_{x^{\prime}}\right\} $}
\label{alg:SubAlgorithm 5-2}
\begin{algorithmic}[1]
	\Require{$\infty\geq x^\prime >x >0$}
	\Loop
		\State{Sample $C_0=0, C_{-1}, \ldots, C_{T_{x}}$ under $\mathbb{P}^\eta$}
		\State{Given $C_{T_{x}}$, sample independent $1_{\{R_0 ^\prime\leq x^\prime-C_{T_x}\}}$ and $U\sim U(0,1)$}
			\Comment{Algorithm~\ref{alg:SubAlgorithm 5-1}} 
		\If{$U \leq \exp(-\eta C_{T_{x}})$ and $1_{\{R_0 ^\prime\leq x^\prime-C_{T_x}\}}=1$}
			\Comment{Accept sample}
			\State{\Return $\{C_n\}_{n\in\mathcal{Z}_{T_x}^1}$}
		\EndIf
	\EndLoop
\end{algorithmic}
\end{algorithm}
\begin{rem}
Since $L_x\leq x$, we have 
$\mathbb{P}(R_0\leq z)\geq \mathbb{P}(E/\eta\leq z)=1-\exp(-z\eta)$
for all $z\geq0$.
Since the overshoot $C_{T_x}-x$ is in the interval $(0,d)$,
the expected running time of Algorithm~\ref{alg:SubAlgorithm 5-2} 
(i.e. one over the acceptance probability) 
is smaller than $\exp(\eta (x+d))/(1-\exp(-\eta( x'-x-d))$ if  
$x'>x+d$.
\end{rem}

In Algorithm~\ref{alg:Algorithm 5} we also need to simulate the path of the walk reaching a negative level
$-x$, while staying below a given positive level forever. Algorithm~\ref{alg:SubAlgorithm 5-3} achieves this (see~\citep[Lemma~3]{MR2865624}). 
Its expected running time is bounded above 
by $1/((1-\exp(-\eta(x'+x)))\mathbb{P}(T_{-x}<T_{x^{\prime}}))<\infty$.

\begin{algorithm}
\caption{Simulation of $C_{0},\ldots,C_{T_{-x}}$
conditional on $\left\{ T_{x^{\prime}}=-\infty\right\} $}
\label{alg:SubAlgorithm 5-3} 
\begin{algorithmic}[1]
	\Require{$x \in(0,\infty)$ \& $x^\prime\in(0, \infty]$}
	\Loop
		\State{Sample $C_0=0, C_{-1}, \ldots, C_{T_{-x}}$ under $\mathbb{P}$}
		\State{Given $C_{T_{-x}}$,  sample an independent $1_{\{R_0 ^\prime\leq x^\prime-C_{T_{-x}}\}}$}
			\Comment{Algorithm~\ref{alg:SubAlgorithm 5-1}}
		\If{$1_{\{R_0 ^\prime\leq x^\prime-C_{T_{-x}}\}}=1$ and $\max_{n\in\mathcal{Z}_{T_{-x}}^1}\left\{C_n\right\} \leq x^\prime$}
			\Comment{Accept sample}
			\State{\Return $\{C_n\}_{n\in\mathcal{Z}_{T_{-x}}^1}$}
		\EndIf
	\EndLoop
\end{algorithmic}
\end{algorithm}

We now give a brief overview of the algorithm in~\citep{MR2865624} for the 
simulation of $\left\{ \left(C_{n},R_{n}\right)\right\} _{n\in\mathcal{Z}^{1}}$.
Pick $\kappa>\max\{\log(2)/(3\eta),1/(\alpha\rho)\}$ (see assumption in~\citep[Prop.~3]{MR2865624}).
\citep{MR2865624}~constructs sequences $\Delta=\left\{ \Delta\left(k\right)\right\} _{k\geq0}$
and $\tau=\left\{ \tau_{k}\right\} _{k\geq0}$ of decreasing negative and increasing positive times, respectively: 
\begin{enumerate}
\item at the start of each iteration of the algorithm we are given 
\[
\left(\left\{ \tau_{k}\right\} _{k\in\left\{ 0,\ldots,m\right\} },\left\{
\Delta\left(k\right)\right\} _{k\in\left\{ 0,\ldots,\tau_{m}\right\} },\left\{
C_{n}\right\}_{n\in\mathcal{Z}^1_{\Delta\left(\tau_{m}\right)}},
\left\{ R_{n}\right\} _{n\in\mathcal{Z}^1_{\Delta\left(\tau_{m}-1\right)} }\right), \]
\item at each iteration we sample 
\[
\left(\tau_{m+1},\left\{ \Delta\left(k\right)\right\} _{k\in\left\{ \tau_{m}+1,\ldots,\tau_{m+1}\right\} },
\left\{ C_{n}\right\} _{n\in\mathcal{Z}^{\Delta\left(\tau_{m}\right)}_{\Delta\left(\tau_{m+1}\right)}},
\left\{ R_{n}\right\}_{n\in\mathcal{Z}^{\Delta\left(\tau_{m}-1\right)}_{\Delta\left(\tau_{m+1}-1\right)}}
\right).
\]
\end{enumerate}
Note that at the $m$-th iteration 
we have $\Delta\left(\tau_{m}\right)-\Delta\left(\tau_{m}-1\right)$
more values of the walk than of the reflected process. 
More precisely, the algorithm  starts by setting
$\Delta(0)=0$ and repeats the following steps:
given $\left\{ \tau_{k}\right\} _{k\in\left\{ 0,\ldots,m\right\} }$
and $\left\{ \Delta(k)\right\} _{k\in\left\{ 0,\ldots,\tau_{m}\right\} }$,
then put $\Delta\left(\tau_{m}+1\right)=T_{-2\kappa}^{\Delta\left(\tau_{m}\right)}$.
Next, if $\Delta\left(k\right)$ is the last known value of $\Delta$
and if $R_{\Delta\left(k\right)}>\kappa$, then put $\Delta\left(k+1\right)=T_{\kappa}^{\Delta\left(k\right)}$
and $\Delta\left(k+2\right)=T_{-2\kappa}^{\Delta\left(k+1\right)}$.
If instead $R_{\Delta\left(k\right)}\leq\kappa$ then put $\tau_{m+1}=k$.
Repeat the previous two steps until we can compute $\tau_{m+1}$,
that is, until $R_{\Delta\left(k\right)}\leq\kappa$. After computing
$\tau_{m+1}$ go back and repeat.
By construction (see Proposition 3 in~\citep{MR2865624}) we have
\[
\sup_{n\in\mathcal{Z}^{\Delta\left(\tau_{m}\right)+1}}\left\{ C_{n}\right\} \leq C_{\Delta\left(\tau_{m}-1\right)}-\kappa,\qquad\text{implying}\quad
R_{n}=\max_{k\in\mathcal{Z}_{\Delta\left(\tau_{m}\right)+1}^{n+1}}\left\{ C_{k}\right\} -C_{n},
\quad n\in\mathcal{Z}_{\Delta\left(\tau_{m}-1\right)}^{\Delta\left(\tau_{m-1}\right)}.
\]
Hence, we may compute 
$R_{n}$, $n\in\mathcal{Z}_{\Delta\left(\tau_{m}-1\right)}^{1}$,
from the simulated values $\tau_{m}$, $\Delta_{\left(\tau_{m}-1\right)}$, $\Delta_{\left(\tau_{m}\right)}, 
\left\{ C_{n}\right\} _{n\in\mathcal{Z}_{\Delta\left(\tau_{m}\right)}^{1}}$.
\begin{algorithm}
\caption{Simulation of the random walk and its reflected process}
\label{alg:Algorithm 5} 
\begin{algorithmic}[1]
	\Require{$\kappa > \max\{\frac{\log(2)}{3\eta}, \frac{1}{\alpha\rho}\}$, $d\in (0,1)$, $\infty\geq x>0$ and $m\geq1$}
		\Comment{$x$ is an upper bound for $R_0$}
	\State{Put $t:=C_0:=\Delta(0):=\tau_0:=0$}
	\For{$k\in\{1,\ldots,m\}$}
		\State{Put $t:=\tau_{k-1}$}
		\Loop
			\State{Sample $C_{\Delta(t)-1}, \ldots, C_{T^{\Delta(t)}_{-2\kappa}}$ conditioned on 
				$\{R_{\Delta(t)}<x-C_{\Delta(t)}\}$}
				\Comment{Algorithm~\ref{alg:SubAlgorithm 5-3}}
			\State{Put  $\Delta(t+1):=T^{\Delta(t)}_{-2\kappa}$ and $t:=t+1$}
			\State{Sample $1_{\{R_{\Delta(t)}>\kappa\}}$ given $\{R_{\Delta(t)} < x-C_{\Delta(t)}\}$}
				\Comment{Algorithm~\ref{alg:SubAlgorithm 5-1}}
			\If{$1_{\{R_{\Delta(t)}>\kappa\}}=1$}
				\State{Sample $C_{\Delta(t)-1}, \ldots, C_{T^{\Delta(t)}_\kappa}$ from $\mathbb{P}^\eta$}
					\Comment{Algorithm~\ref{alg:SubAlgorithm 5-2}}
				\State{Put $\Delta(t+1):=T^{\Delta(t)}_\kappa$ and $t:= t+1$}
			\Else
				\State{Put $x:=\kappa+C_{\Delta(t)}$, $\tau_{k}:= t$ and \textbf{exit loop}}
			\EndIf
		\EndLoop
	\EndFor
	\State{Compute $\left\{ R_{n}\right\}_{n\in\mathcal{Z}^1_{\Delta\left(\tau_{m}-1\right)} }$}
	\State{\Return 
$\left(\left\{ \tau_{k}\right\} _{k\in\left\{ 0,\ldots,m\right\} },\left\{
\Delta\left(k\right)\right\} _{k\in\left\{ 0,\ldots,\tau_{m}\right\} },\left\{
C_{n}\right\}_{n\in\mathcal{Z}^1_{\Delta\left(\tau_{m}\right)}},
\left\{ R_{n}\right\} _{n\in\mathcal{Z}^1_{\Delta\left(\tau_{m}-1\right)} }\right)$}
\end{algorithmic}
\end{algorithm}

\subsection{Sampling $(U_n,\Lambda_n)$ given $F_n$}
\label{subsec:ULambda_given_F}
Algorithm~\ref{alg:Algorithm 2}
requires the knowledge of 
$\{(U_n,\Lambda_n)\}_{n\in\mathcal{Z}^0}$,
given the increments $\{F_n\}_{n\in\mathcal{Z}^0}$
of the random walk 
$C$.
Since 
$\log\left(U_{n}\Lambda_{n}\right)=\alpha \left(F_{n}-d\right)$ for all $n\in\mathbb{Z}$,
by independence, we may restrict attention to $n=1$. 
It follows from~\eqref{eq:Distrib_identity} above that 
$\Lambda_{1}\overset{d}{=}\prod_{i=2}^{T}U_{i}$ for an
independent geometric random variable $T$ with parameter $\rho$ on the positive integers (if $T=1$ the right-hand side is defined to equal one). 
Hence, by independence, we have 
$U_1 \Lambda_{1}\overset{d}{=}\prod_{i=1}^{T}U_{i}$.
By~\eqref{eq:log_Lambda_exp},
$-\log \Lambda_1$ conditioned on being positive is exponential with mean $1/\rho$.
Hence for any $n\geq1$ and $y>0$ we obtain
\[
\mathbb{P}\left[ T=n\left|-\sum_{i=1}^{T}\log\left(U_{i}\right)=y\right. \right]
=\frac{\rho\left(1-\rho\right)^{n-1}\frac{y^{n-1}e^{-y}}{\left(n-1\right)!}}{\rho e^{-\rho y}}=\frac{\left[\left(1-\rho\right)y\right]^{n-1}e^{-\left(1-\rho\right)y}}{\left(n-1\right)!}.
\]
Thus the conditional law of $T-1$ given $\sum_{i=1}^{T}\log\left(U_{i}\right)=-y$
is Poisson with mean $\left(1-\rho\right)y$.
If $T=1$, then $-\log(U_1)=y$ and $\Lambda_1=1$. If $T>1$, then for $x\in(0,y)$ we get
\[
\mathbb{P}\left[ -\log\left(U_{1}\right)\in dx\left|T=n,-\sum_{i=1}^{T}\log\left(U_{i}\right)=y\right.\right]
=\frac{e^{-x}\frac{\left(y-x\right)^{n-2}e^{-\left(y-x\right)}}{\left(n-2\right)!}}{\frac{y^{n-1}e^{-y}}{\left(n-1\right)!}}dx
=\left(n-1\right)\frac{\left(y-x\right)^{n-2}}{y^{n-1}}dx.
\]
Hence, conditional on 
$T=n$ and $\log\left(\prod_{i=1}^{T}U_{i}\right)=-y$,
the law of $-\frac{1}{y}\log\left(U_{1}\right)$ 
is $Beta\left(1,n-1\right)$
(understood as the Dirac measure $\delta_{1}$ when $n=1$). 
Finally we set  $\Lambda_{1}=\exp\left(\alpha\left(F_1-d\right)\right)/U_1$. 
\begin{algorithm}
\caption{Simulation of $\left\{\left(U_{k},\Lambda_{k}\right)\right\}_{k\in\mathcal{Z}_{m}^{n}}$
given $\left\{F_{k}\right\}_{k\in\mathcal{Z}_{m}^{n}}$}
\label{alg:Algorithm 2-1}
\begin{algorithmic}[1]
	\Require{$\left\{F_{k}\right\}_{k\in\mathcal{Z}_{m}^{n}}$ for $m,n\in\mathbb{Z}$ and $m< n$.}
	\For{$k\in \mathcal{Z}^n_m$}
		\State{Sample $T-1\sim Poisson\left(-\alpha\left(F_{k}-d\right)\left(1-\rho\right)\right)$}
		\State{Sample $L\sim Beta\left(1,T-1\right)$ }
		\State{Let $U_k:=\exp\left(L\alpha\left(F_k-d\right)\right)$ and $\Lambda_k := \exp\left(\left(1-L\right)\alpha\left(F_{k}-d\right)\right)$}
	\EndFor
	\State{\Return $\left\{\left(U_{k},\Lambda_{k}\right)\right\}_{k\in\mathcal{Z}_{m}^{n}}$}
\end{algorithmic}
\end{algorithm}
\section{\label{sec:numerics}Implementation}

Recall the definitions of the process $\{(C_n,F_n)\}_{n\in\mathbb{Z}}$ in~\ref{eq:RW}, 
of $\{\Theta_n\}_{n\in\mathbb{Z}}$ in the first paragraph of Section~\ref{sec:Backward_Sim} 
and of $\mathbb{P}^\eta$ in the second paragraph of Section~\ref{subsec:BlanchetSigman}. 
Before providing a concrete and concise algorithm and testing it, 
we will introduce a practical improvement 
based on a simple consideration.

Note that simulating the iid variables $\{\Theta_n\}_{n\in\mathcal{Z}^0}$ is 
clearly quicker and easier than employing the full machinery of our algorithms.
Recall that the dominating process was introduced only to detect 
coalescence for the chain $\{X_n\}_{n\in\mathcal{Z}^0}$. Thus, given 
$\{\Theta_n\}_{n\in\mathcal{Z}_{\Delta(0)}^0}$ for some burn-in parameter 
$\Delta(0)\in\mathcal{Z}^0$ and an upper bound 
$X^\prime_{\Delta(0)}=D_{\Delta(0)}\geq X_{\Delta(0)}$
(recall the definition of $\{D_n\}$ in~\eqref{eq:DominatingProcess}), 
one could recursively construct $X_{n+1}^\prime=\psi(X_n^\prime,\Theta_n)$ 
for $n\in\mathcal{Z}_{\Delta(0)}^{0}$ and if any coalescence were detected, 
we would be certain that $X_0^\prime=X_0$. Our objective is hence to 
take an appropriate $\Delta(0)$ that increases the probability 
$\mathbb{P}(X_0^\prime=X_0)$. Algorithm~\ref{alg:FullAlgorithm} is a complete 
and compact simulation algorithm of $X_0$, which makes use of this.

It is known that spectrally negative stable processes of infinite variation
($\alpha>1$ and $\rho=1/\alpha$) satisfy 
$\overline{S}(\alpha,\rho)=S^+(\alpha,\rho)$~\citep[Thm~1]{MR3033593}.
As a simple application and sanity-check, we now present a comparison
between the empirical distribution function of $N=10^4$ samples 
against the actual distribution function in this case.
To validate the samples, we compute the Kolmogorov-Smirnov statistic
and test the hypothesis.\footnote{These graphs can be replicated 
following the guide available in~\cite{Jorge_GitHub}.} In all three cases 
the null hypothesis of all samples coming from their respective
distribution functions is not rejected (see Figure~\ref{fig:ecdf}).

\begin{figure}[H]
		\centering{}\resizebox{.32\linewidth}{!}{
		\begin{subfigure}[h]{.75\linewidth}
			\begin{tikzpicture} 
			\begin{axis} 
		[
		ymin=0,
		ymax=1,
		xmin=0,
		xmax=1.6,
		title={\LARGE$\alpha=1.1$},
		xlabel={\LARGE $x$},
		ylabel={\LARGE $F(x)$},
		width=12cm,
		height=7cm,
		axis on top=true,
		axis x line=middle,
		axis y line=middle,
		legend style = {at={(1,.2)}, anchor=south east}
		]
		
		\addplot[
		dashed,
		mark=o,
		color=black,
		]
		coordinates {
		(0.0,0.0)(0.05,0.0049)(0.1,0.0103)(0.15,0.0163)(0.2,0.0228)(0.25,0.0300)(0.3,0.0379)(0.35,0.0468)(0.4,0.0566)(0.45,0.0676)(0.5,0.0800)(0.55,0.0940)(0.6,0.1098)(0.65,0.1279)(0.7,0.1486)(0.75,0.1724)(0.8,0.1998)(0.85,0.2317)(0.9,0.2686)(0.95,0.3116)(1.0,0.3610)(1.05,0.4183)(1.1,0.4835)(1.15,0.5565)(1.2,0.6360)(1.25,0.7188)(1.3,0.8000)(1.35,0.8728)(1.4,0.9306)(1.45,0.9692)(1.5,0.9896)(1.55,0.9976)(1.6,0.9996)
		};
		
		\addplot[
		dashed,
		mark=+,
		color=black,
		]
		coordinates {
		(0.0,0.0)(0.05,0.005)(0.1,0.009)(0.15,0.016)(0.2,0.022)(0.25,0.029)(0.3,0.037)(0.35,0.046)(0.4,0.056)(0.45,0.068)(0.5,0.080)(0.55,0.093)(0.6,0.109)(0.65,0.126)(0.7,0.147)(0.75,0.168)(0.8,0.196)(0.85,0.231)(0.9,0.264)(0.95,0.308)(1.0,0.359)(1.05,0.413)(1.1,0.482)(1.15,0.549)(1.2,0.628)(1.25,0.712)(1.3,0.794)(1.35,0.87)(1.4,0.930)(1.45,0.970)(1.5,0.990)(1.55,0.997)(1.6,0.999)
		};
		
		\legend {\LARGE Real,\LARGE Estimate};
		\end{axis}
			\end{tikzpicture}
		\end{subfigure}}
		\centering{}\resizebox{.32\linewidth}{!}{
		\begin{subfigure}[h]{.75\linewidth}
			\begin{tikzpicture} 
			\begin{axis} 
		[
		ymin=0,
		ymax=1,
		xmin=0,
		xmax=2.6,
		title={\LARGE$\alpha=1.5$},
		xlabel={\LARGE $x$},
		ylabel={\LARGE $F(x)$},
		width=12cm,
		height=7cm,
		axis on top=true,
		axis x line=middle,
		axis y line=middle,
		legend style = {at={(1,.2)}, anchor=south east}
		]
		
		\addplot[
		dashed,
		mark=o,
		color=black,
		]
		coordinates {
		(0.0,0.0)(0.05,0.0189)(0.1,0.0385)(0.15,0.0587)(0.2,0.0795)(0.25,0.1009)(0.3,0.1229)(0.35,0.1454)(0.4,0.1684)(0.45,0.1920)(0.5,0.2161)(0.55,0.2406)(0.6,0.2655)(0.65,0.2908)(0.7,0.3164)(0.75,0.3423)(0.8,0.3684)(0.85,0.3946)(0.9,0.4210)(0.95,0.4474)(1.0,0.4737)(1.05,0.4999)(1.1,0.5260)(1.15,0.5518)(1.2,0.5772)(1.25,0.6023)(1.3,0.6269)(1.35,0.6510)(1.4,0.6745)(1.45,0.6973)(1.5,0.7193)(1.55,0.7407)(1.6,0.7611)(1.65,0.7807)(1.7,0.7994)(1.75,0.8172)(1.8,0.8340)(1.85,0.8499)(1.9,0.8647)(1.95,0.8786)(2.0,0.8915)(2.05,0.9034)(2.1,0.9144)(2.15,0.9245)(2.2,0.9337)(2.25,0.9420)(2.3,0.9496)(2.35,0.9563)(2.4,0.9624)(2.45,0.9678)(2.5,0.9725)(2.55,0.9767)(2.6,0.9803)
		};
		
		\addplot[
		dashed,
		mark=+,
		color=black,
		]
		coordinates {
		(0.0,0.0)(0.05,0.020)(0.1,0.038)(0.15,0.059)(0.2,0.081)(0.25,0.101)(0.3,0.124)(0.35,0.148)(0.4,0.172)(0.45,0.196)(0.5,0.220)(0.55,0.243)(0.6,0.265)(0.65,0.290)(0.7,0.316)(0.75,0.343)(0.8,0.370)(0.85,0.398)(0.9,0.424)(0.95,0.449)(1.0,0.474)(1.05,0.503)(1.1,0.529)(1.15,0.555)(1.2,0.581)(1.25,0.606)(1.3,0.633)(1.35,0.657)(1.4,0.679)(1.45,0.701)(1.5,0.724)(1.55,0.746)(1.6,0.768)(1.65,0.789)(1.7,0.808)(1.75,0.826)(1.8,0.841)(1.85,0.856)(1.9,0.871)(1.95,0.886)(2.0,0.899)(2.05,0.910)(2.1,0.919)(2.15,0.930)(2.2,0.938)(2.25,0.946)(2.3,0.954)(2.35,0.959)(2.4,0.964)(2.45,0.968)(2.5,0.974)(2.55,0.977)(2.6,0.981)
		};
		
		\legend {\LARGE Real,\LARGE Estimate};
		\end{axis}
			\end{tikzpicture}
		\end{subfigure}}
		\centering{}\resizebox{.32\linewidth}{!}{
		\begin{subfigure}[h]{.75\linewidth}
			\begin{tikzpicture} 
			\begin{axis} 
		[
		ymin=0,
		ymax=1,
		xmin=0,
		xmax=3.6,
		title={\LARGE$\alpha=1.9$},
		xlabel={\LARGE $x$},
		ylabel={\LARGE $F(x)$},
		width=12cm,
		height=7cm,
		axis on top=true,
		axis x line=middle,
		axis y line=middle,
		legend style = {at={(1,.2)}, anchor=south east}
		]
		
		\addplot[
		dashed,
		mark=o,
		color=black,
		]
		coordinates {
		(0.0,0.0)(0.05,0.0267)(0.1,0.0536)(0.15,0.0806)(0.2,0.1076)(0.25,0.1345)(0.3,0.1614)(0.35,0.1882)(0.4,0.2149)(0.45,0.2415)(0.5,0.2678)(0.55,0.2940)(0.6,0.3198)(0.65,0.3454)(0.7,0.3706)(0.75,0.3955)(0.8,0.4200)(0.85,0.4441)(0.9,0.4678)(0.95,0.4910)(1.0,0.5137)(1.05,0.5359)(1.1,0.5576)(1.15,0.5787)(1.2,0.5993)(1.25,0.6193)(1.3,0.6387)(1.35,0.6575)(1.4,0.6757)(1.45,0.6933)(1.5,0.7103)(1.55,0.7267)(1.6,0.7425)(1.65,0.7576)(1.7,0.7721)(1.75,0.7860)(1.8,0.7994)(1.85,0.8121)(1.9,0.8242)(1.95,0.8358)(2.0,0.8467)(2.05,0.8572)(2.1,0.8671)(2.15,0.8764)(2.2,0.8853)(2.25,0.8937)(2.3,0.9015)(2.35,0.9089)(2.4,0.9159)(2.45,0.9225)(2.5,0.9286)(2.55,0.9343)(2.6,0.9396)(2.65,0.9446)(2.7,0.9493)(2.75,0.9536)(2.8,0.9576)(2.85,0.9613)(2.9,0.9648)(2.95,0.9679)(3.0,0.9709)(3.05,0.9736)(3.1,0.9760)(3.15,0.9783)(3.2,0.9804)(3.25,0.9823)(3.3,0.9841)(3.35,0.9857)(3.4,0.9871)(3.45,0.9884)(3.5,0.9896)(3.55,0.9907)(3.6,0.9917)
		};
		
		\addplot[
		dashed,
		mark=+,
		color=black,
		]
		coordinates {
		(0.0,0.0)(0.05,0.025)(0.1,0.049)(0.15,0.075)(0.2,0.100)(0.25,0.129)(0.3,0.157)(0.35,0.183)(0.4,0.213)(0.45,0.238)(0.5,0.264)(0.55,0.287)(0.6,0.317)(0.65,0.342)(0.7,0.368)(0.75,0.395)(0.8,0.418)(0.85,0.447)(0.9,0.471)(0.95,0.493)(1.0,0.516)(1.05,0.536)(1.1,0.557)(1.15,0.578)(1.2,0.595)(1.25,0.613)(1.3,0.634)(1.35,0.652)(1.4,0.671)(1.45,0.688)(1.5,0.705)(1.55,0.721)(1.6,0.739)(1.65,0.753)(1.7,0.767)(1.75,0.781)(1.8,0.795)(1.85,0.807)(1.9,0.819)(1.95,0.831)(2.0,0.840)(2.05,0.851)(2.1,0.862)(2.15,0.872)(2.2,0.880)(2.25,0.891)(2.3,0.900)(2.35,0.907)(2.4,0.915)(2.45,0.921)(2.5,0.928)(2.55,0.932)(2.6,0.938)(2.65,0.943)(2.7,0.948)(2.75,0.953)(2.8,0.956)(2.85,0.961)(2.9,0.964)(2.95,0.967)(3.0,0.971)(3.05,0.973)(3.1,0.975)(3.15,0.977)(3.2,0.979)(3.25,0.981)(3.3,0.983)(3.35,0.985)(3.4,0.986)(3.45,0.987)(3.5,0.988)(3.55,0.99)(3.6,0.991)
		};
		
		\legend {\LARGE Real,\LARGE Estimate};
		\end{axis}
			\end{tikzpicture}
		\end{subfigure}}
		\centering{}\resizebox{.32\linewidth}{!}{
		\begin{subfigure}[b]{.75\linewidth}
			\begin{tikzpicture} 
			\begin{axis} 
		[
		ymin=-1.7,
		ymax=1.7,
		xmin=0,
		xmax=1,
		xlabel={\LARGE $F(t)$},
		ylabel={\LARGE $\sqrt{n}(F_n(t)-F(t))$},
		width=12cm,
		height=7cm,
		axis on top=true,
		axis x line=middle,
		axis y line=middle,	
		legend style = {at={(1,.8)}, anchor=south east}
		]
		
		\addplot[
		dashed,		
		]
		coordinates{
		(0,1.2221)(1,1.2221)
		};
		\addplot[
		dashed,		
		]
		coordinates{
		(0,-1.2221)(1,-1.2221)
		};
		
		\addplot[
		solid,
		color=black,
		]
		coordinates {
		(0,0)(0.000,0.0165)(0.002,0.0290)(0.004,0.0784)(0.006,0.0414)(0.009,-0.016)(0.012,-0.105)(0.014,-0.113)(0.015,-0.035)(0.017,-0.052)(0.018,0.0185)(0.020,0.0519)(0.023,-0.016)(0.025,-0.057)(0.027,-0.066)(0.029,-0.068)(0.031,-0.038)(0.033,-0.018)(0.036,-0.117)(0.037,-0.092)(0.040,-0.150)(0.042,-0.102)(0.043,-0.062)(0.044,0.0145)(0.046,0.0053)(0.048,0.0611)(0.050,0.0885)(0.052,0.0482)(0.054,0.0310)(0.056,0.0094)(0.058,0.0651)(0.060,0.0188)(0.063,-0.008)(0.064,0.0307)(0.066,0.0623)(0.068,0.0500)(0.070,0.0202)(0.073,-0.071)(0.075,-0.009)(0.077,-0.016)(0.079,-0.004)(0.080,0.0196)(0.082,0.0465)(0.084,0.0575)(0.086,0.0076)(0.089,-0.043)(0.091,-0.066)(0.093,-0.055)(0.095,-0.029)(0.097,-0.053)(0.100,-0.114)(0.101,-0.057)(0.103,-0.078)(0.105,-0.059)(0.107,-0.099)(0.109,-0.054)(0.112,-0.110)(0.114,-0.117)(0.116,-0.122)(0.118,-0.157)(0.120,-0.154)(0.122,-0.143)(0.124,-0.146)(0.126,-0.176)(0.128,-0.184)(0.130,-0.199)(0.132,-0.164)(0.134,-0.147)(0.136,-0.133)(0.138,-0.176)(0.140,-0.150)(0.142,-0.171)(0.145,-0.208)(0.146,-0.129)(0.147,-0.085)(0.149,-0.081)(0.152,-0.153)(0.154,-0.158)(0.156,-0.185)(0.158,-0.191)(0.161,-0.243)(0.163,-0.237)(0.165,-0.211)(0.168,-0.367)(0.170,-0.360)(0.172,-0.363)(0.174,-0.325)(0.176,-0.366)(0.178,-0.315)(0.180,-0.339)(0.182,-0.385)(0.184,-0.398)(0.187,-0.424)(0.189,-0.401)(0.190,-0.393)(0.193,-0.427)(0.194,-0.367)(0.196,-0.381)(0.198,-0.391)(0.200,-0.366)(0.202,-0.330)(0.204,-0.387)(0.206,-0.385)(0.208,-0.337)(0.209,-0.290)(0.211,-0.279)(0.214,-0.309)(0.215,-0.299)(0.217,-0.285)(0.219,-0.245)(0.221,-0.203)(0.222,-0.192)(0.224,-0.149)(0.225,-0.098)(0.228,-0.110)(0.229,-0.078)(0.231,-0.041)(0.233,-0.057)(0.236,-0.146)(0.238,-0.173)(0.241,-0.271)(0.243,-0.272)(0.245,-0.260)(0.247,-0.238)(0.249,-0.252)(0.252,-0.321)(0.254,-0.323)(0.256,-0.302)(0.258,-0.319)(0.260,-0.358)(0.263,-0.428)(0.264,-0.375)(0.267,-0.441)(0.269,-0.404)(0.271,-0.405)(0.272,-0.396)(0.274,-0.380)(0.278,-0.507)(0.280,-0.579)(0.283,-0.671)(0.284,-0.593)(0.286,-0.527)(0.288,-0.502)(0.289,-0.492)(0.291,-0.433)(0.293,-0.439)(0.295,-0.420)(0.296,-0.363)(0.298,-0.351)(0.300,-0.306)(0.301,-0.286)(0.303,-0.244)(0.305,-0.288)(0.307,-0.296)(0.309,-0.284)(0.312,-0.328)(0.314,-0.349)(0.315,-0.271)(0.317,-0.227)(0.319,-0.279)(0.321,-0.240)(0.323,-0.284)(0.325,-0.224)(0.326,-0.154)(0.328,-0.139)(0.330,-0.135)(0.332,-0.123)(0.334,-0.131)(0.336,-0.129)(0.338,-0.197)(0.340,-0.129)(0.342,-0.158)(0.344,-0.162)(0.345,-0.085)(0.348,-0.101)(0.349,-0.083)(0.352,-0.101)(0.354,-0.100)(0.355,-0.079)(0.358,-0.122)(0.360,-0.175)(0.362,-0.160)(0.364,-0.145)(0.365,-0.099)(0.367,-0.069)(0.370,-0.142)(0.372,-0.111)(0.374,-0.157)(0.376,-0.174)(0.378,-0.165)(0.380,-0.197)(0.383,-0.202)(0.385,-0.282)(0.387,-0.296)(0.390,-0.339)(0.393,-0.462)(0.395,-0.488)(0.397,-0.451)(0.399,-0.467)(0.401,-0.497)(0.404,-0.518)(0.405,-0.483)(0.407,-0.454)(0.410,-0.550)(0.412,-0.507)(0.413,-0.481)(0.415,-0.448)(0.417,-0.464)(0.419,-0.481)(0.422,-0.507)(0.424,-0.509)(0.426,-0.528)(0.427,-0.484)(0.429,-0.468)(0.431,-0.466)(0.434,-0.532)(0.435,-0.482)(0.437,-0.429)(0.439,-0.431)(0.441,-0.459)(0.444,-0.517)(0.445,-0.484)(0.447,-0.497)(0.449,-0.488)(0.451,-0.458)(0.453,-0.497)(0.455,-0.406)(0.457,-0.478)(0.459,-0.411)(0.460,-0.397)(0.462,-0.367)(0.464,-0.319)(0.465,-0.286)(0.467,-0.213)(0.469,-0.231)(0.471,-0.240)(0.473,-0.211)(0.475,-0.237)(0.477,-0.201)(0.478,-0.185)(0.480,-0.145)(0.482,-0.121)(0.485,-0.236)(0.486,-0.187)(0.489,-0.247)(0.491,-0.285)(0.493,-0.270)(0.495,-0.270)(0.498,-0.312)(0.500,-0.384)(0.503,-0.443)(0.505,-0.438)(0.508,-0.579)(0.510,-0.565)(0.512,-0.502)(0.514,-0.529)(0.516,-0.537)(0.518,-0.525)(0.519,-0.499)(0.521,-0.455)(0.524,-0.561)(0.526,-0.580)(0.529,-0.632)(0.531,-0.633)(0.533,-0.663)(0.535,-0.611)(0.536,-0.590)(0.539,-0.625)(0.541,-0.651)(0.543,-0.655)(0.545,-0.668)(0.547,-0.678)(0.549,-0.682)(0.551,-0.686)(0.554,-0.732)(0.555,-0.697)(0.557,-0.694)(0.559,-0.626)(0.560,-0.550)(0.562,-0.549)(0.564,-0.528)(0.566,-0.539)(0.568,-0.521)(0.569,-0.488)(0.572,-0.519)(0.574,-0.505)(0.576,-0.544)(0.578,-0.560)(0.580,-0.564)(0.583,-0.651)(0.585,-0.630)(0.587,-0.627)(0.588,-0.570)(0.590,-0.559)(0.592,-0.561)(0.594,-0.560)(0.596,-0.506)(0.598,-0.537)(0.600,-0.559)(0.602,-0.548)(0.605,-0.600)(0.606,-0.588)(0.608,-0.544)(0.610,-0.569)(0.612,-0.572)(0.614,-0.574)(0.616,-0.562)(0.618,-0.515)(0.620,-0.567)(0.622,-0.595)(0.624,-0.545)(0.627,-0.624)(0.629,-0.658)(0.631,-0.675)(0.634,-0.763)(0.636,-0.739)(0.639,-0.845)(0.641,-0.840)(0.642,-0.797)(0.644,-0.741)(0.646,-0.706)(0.648,-0.742)(0.649,-0.678)(0.651,-0.616)(0.652,-0.595)(0.655,-0.602)(0.657,-0.642)(0.659,-0.646)(0.661,-0.624)(0.662,-0.581)(0.665,-0.620)(0.666,-0.525)(0.668,-0.552)(0.670,-0.551)(0.672,-0.548)(0.673,-0.473)(0.676,-0.514)(0.678,-0.538)(0.680,-0.534)(0.682,-0.521)(0.684,-0.534)(0.686,-0.532)(0.688,-0.503)(0.690,-0.515)(0.692,-0.560)(0.694,-0.554)(0.695,-0.468)(0.697,-0.481)(0.699,-0.483)(0.701,-0.465)(0.704,-0.577)(0.707,-0.695)(0.710,-0.705)(0.712,-0.759)(0.714,-0.724)(0.716,-0.707)(0.717,-0.680)(0.720,-0.710)(0.722,-0.700)(0.723,-0.676)(0.725,-0.634)(0.726,-0.564)(0.728,-0.586)(0.731,-0.611)(0.732,-0.582)(0.734,-0.588)(0.737,-0.653)(0.739,-0.606)(0.741,-0.663)(0.743,-0.642)(0.744,-0.591)(0.746,-0.572)(0.748,-0.562)(0.750,-0.578)(0.752,-0.574)(0.755,-0.653)(0.756,-0.595)(0.759,-0.638)(0.761,-0.641)(0.763,-0.616)(0.764,-0.595)(0.766,-0.518)(0.768,-0.554)(0.771,-0.617)(0.772,-0.599)(0.775,-0.622)(0.777,-0.651)(0.778,-0.586)(0.780,-0.590)(0.782,-0.559)(0.785,-0.631)(0.787,-0.609)(0.789,-0.614)(0.790,-0.572)(0.793,-0.618)(0.794,-0.576)(0.795,-0.488)(0.797,-0.429)(0.800,-0.506)(0.801,-0.480)(0.803,-0.446)(0.805,-0.461)(0.807,-0.442)(0.808,-0.387)(0.811,-0.425)(0.813,-0.422)(0.814,-0.354)(0.817,-0.400)(0.819,-0.450)(0.821,-0.468)(0.823,-0.445)(0.824,-0.385)(0.826,-0.337)(0.829,-0.433)(0.830,-0.379)(0.832,-0.316)(0.833,-0.278)(0.836,-0.350)(0.838,-0.319)(0.840,-0.328)(0.842,-0.364)(0.843,-0.281)(0.845,-0.255)(0.847,-0.216)(0.848,-0.194)(0.850,-0.186)(0.853,-0.243)(0.855,-0.249)(0.857,-0.205)(0.859,-0.221)(0.861,-0.296)(0.863,-0.224)(0.865,-0.220)(0.867,-0.238)(0.869,-0.241)(0.871,-0.252)(0.873,-0.267)(0.876,-0.317)(0.877,-0.279)(0.880,-0.369)(0.883,-0.409)(0.884,-0.341)(0.886,-0.381)(0.887,-0.272)(0.889,-0.296)(0.891,-0.258)(0.893,-0.225)(0.895,-0.234)(0.896,-0.185)(0.899,-0.205)(0.900,-0.198)(0.902,-0.151)(0.904,-0.144)(0.906,-0.116)(0.907,-0.073)(0.909,-0.060)(0.911,-0.026)(0.913,-0.004)(0.914,0.0230)(0.916,0.0346)(0.918,0.0079)(0.920,0.0330)(0.923,-0.055)(0.925,-0.034)(0.927,-0.036)(0.929,-0.056)(0.930,0.0271)(0.932,0.0513)(0.934,0.0154)(0.936,0.0100)(0.938,0.0824)(0.940,0.0597)(0.942,0.0828)(0.944,0.0753)(0.947,-0.054)(0.949,-0.009)(0.950,0.0568)(0.952,0.0822)(0.954,0.0848)(0.956,0.0970)(0.958,0.0538)(0.960,0.0836)(0.962,0.0733)(0.964,0.0413)(0.966,0.0851)(0.967,0.1208)(0.969,0.1073)(0.972,0.0476)(0.975,-0.0043)(0.977,-0.000)(0.978,0.0228)(0.981,-0.002)(0.982,0.0025)(0.984,0.0547)(0.986,0.0945)(0.987,0.1099)(0.990,0.0896)(0.992,0.0688)(0.995,-0.011)(0.997,-0.021)(0.999,-0.024)(1,0)
		};
		
		
		\end{axis}
			\end{tikzpicture}
		\end{subfigure}}
		\centering{}\resizebox{.32\linewidth}{!}{
		\begin{subfigure}[b]{.75\linewidth}
			\begin{tikzpicture} 
			\begin{axis} 
		[
		ymin=-1.7,
		ymax=1.7,
		xmin=0,
		xmax=1,
		xlabel={\LARGE $F(t)$},
		ylabel={\LARGE $\sqrt{n}(F_n(t)-F(t))$},
		width=12.5cm,
		height=7cm,
		axis on top=true,
		axis x line=middle,
		axis y line=middle,	
		legend style = {at={(1,.8)}, anchor=south east}
		]
		
		\addplot[
		dashed,		
		]
		coordinates{
		(0,1.2221)(1,1.2221)
		};
		\addplot[
		dashed,		
		]
		coordinates{
		(0,-1.2221)(1,-1.2221)
		};
		
		\addplot[
		solid,
		color=black,
		]
		coordinates {
		(0,0)(0.001,-0.011)(0.002,0.0253)(0.004,0.0053)(0.006,0.0638)(0.008,0.0367)(0.010,0.0908)(0.011,0.1080)(0.013,0.1062)(0.016,0.0702)(0.018,0.0916)(0.019,0.1524)(0.020,0.2140)(0.022,0.2228)(0.024,0.2337)(0.027,0.1662)(0.030,0.0631)(0.032,0.0974)(0.033,0.1013)(0.036,0.0748)(0.038,0.0404)(0.040,0.0032)(0.042,0.0388)(0.044,0.0819)(0.046,0.0049)(0.048,0.0324)(0.050,0.0588)(0.052,0.0698)(0.054,0.0834)(0.056,0.0823)(0.057,0.1463)(0.059,0.1616)(0.060,0.2362)(0.063,0.1849)(0.065,0.1942)(0.067,0.1565)(0.069,0.1673)(0.071,0.1709)(0.073,0.1880)(0.074,0.2117)(0.076,0.2077)(0.078,0.2385)(0.080,0.2533)(0.082,0.2200)(0.085,0.1905)(0.087,0.1116)(0.090,0.0803)(0.092,0.0587)(0.094,0.0366)(0.096,0.0585)(0.098,0.0941)(0.100,0.0934)(0.101,0.1048)(0.104,0.0863)(0.106,0.0966)(0.108,0.0606)(0.109,0.1036)(0.112,0.0840)(0.114,0.0850)(0.116,0.0891)(0.117,0.1146)(0.120,0.0694)(0.121,0.1141)(0.123,0.1676)(0.125,0.1485)(0.127,0.1948)(0.128,0.2940)(0.129,0.3514)(0.131,0.3553)(0.133,0.3618)(0.135,0.3001)(0.138,0.2713)(0.140,0.2481)(0.142,0.2640)(0.144,0.2623)(0.146,0.2356)(0.148,0.2732)(0.150,0.2784)(0.151,0.3146)(0.153,0.3564)(0.155,0.3320)(0.157,0.3231)(0.160,0.2694)(0.162,0.2676)(0.164,0.2834)(0.165,0.3624)(0.167,0.3567)(0.168,0.4251)(0.171,0.3613)(0.173,0.3383)(0.176,0.2597)(0.178,0.2268)(0.180,0.2886)(0.181,0.3139)(0.183,0.3070)(0.185,0.3149)(0.187,0.3341)(0.189,0.3824)(0.191,0.3972)(0.192,0.4388)(0.194,0.4803)(0.196,0.4590)(0.198,0.4971)(0.199,0.5298)(0.201,0.5025)(0.203,0.5090)(0.205,0.5139)(0.208,0.4614)(0.209,0.5155)(0.211,0.5442)(0.213,0.5294)(0.216,0.4126)(0.219,0.3878)(0.221,0.3735)(0.223,0.3251)(0.225,0.3101)(0.227,0.3243)(0.230,0.2912)(0.231,0.3068)(0.234,0.2770)(0.236,0.2927)(0.238,0.2817)(0.240,0.2352)(0.243,0.1169)(0.246,0.0439)(0.248,0.0337)(0.250,0.0013)(0.252,0.0284)(0.254,0.0125)(0.256,0.0241)(0.259,-0.035)(0.261,-0.011)(0.262,0.0341)(0.265,-0.046)(0.268,-0.173)(0.271,-0.211)(0.272,-0.140)(0.274,-0.146)(0.275,-0.083)(0.278,-0.109)(0.279,-0.094)(0.281,-0.020)(0.283,-0.068)(0.285,-0.057)(0.287,-0.073)(0.289,-0.085)(0.292,-0.118)(0.293,-0.069)(0.295,-0.062)(0.296,0.0316)(0.299,-0.002)(0.300,0.0397)(0.302,0.0702)(0.304,0.0449)(0.306,0.0413)(0.308,0.0034)(0.311,-0.044)(0.313,-0.035)(0.315,-0.048)(0.317,-0.027)(0.319,-0.040)(0.321,-0.015)(0.322,0.0012)(0.324,0.0414)(0.326,0.0704)(0.328,0.0251)(0.330,0.0244)(0.333,-0.005)(0.334,0.0648)(0.336,0.0026)(0.338,0.0471)(0.340,0.0564)(0.341,0.1204)(0.344,0.0894)(0.346,0.0731)(0.347,0.1138)(0.349,0.1222)(0.351,0.1524)(0.353,0.1785)(0.355,0.1963)(0.358,0.0884)(0.359,0.1601)(0.361,0.1838)(0.363,0.1870)(0.365,0.1954)(0.366,0.2660)(0.368,0.2367)(0.370,0.2586)(0.371,0.3386)(0.373,0.3499)(0.376,0.2455)(0.378,0.2125)(0.380,0.2680)(0.382,0.2769)(0.384,0.2730)(0.386,0.2814)(0.388,0.2536)(0.390,0.2949)(0.391,0.3822)(0.393,0.3564)(0.395,0.3779)(0.397,0.3940)(0.399,0.3626)(0.400,0.4660)(0.403,0.3357)(0.405,0.3197)(0.407,0.3596)(0.409,0.3265)(0.411,0.3764)(0.413,0.3458)(0.415,0.3850)(0.416,0.4303)(0.419,0.3527)(0.421,0.3765)(0.423,0.3959)(0.425,0.3875)(0.426,0.4318)(0.428,0.4618)(0.431,0.3612)(0.433,0.3866)(0.435,0.3573)(0.438,0.2751)(0.440,0.2502)(0.442,0.2675)(0.444,0.2341)(0.447,0.1912)(0.449,0.1580)(0.451,0.1662)(0.453,0.1005)(0.455,0.1051)(0.457,0.1276)(0.460,0.0746)(0.462,0.0515)(0.464,0.0587)(0.466,0.0591)(0.467,0.1133)(0.470,0.0979)(0.471,0.1317)(0.473,0.1250)(0.475,0.1291)(0.477,0.1614)(0.479,0.1081)(0.481,0.1617)(0.483,0.1646)(0.484,0.2213)(0.486,0.2840)(0.487,0.3679)(0.488,0.4409)(0.490,0.4754)(0.493,0.3721)(0.494,0.4062)(0.496,0.4727)(0.499,0.3888)(0.501,0.3783)(0.504,0.2878)(0.505,0.3140)(0.507,0.3716)(0.509,0.3185)(0.511,0.3254)(0.514,0.2893)(0.515,0.3648)(0.517,0.3700)(0.519,0.3979)(0.521,0.3782)(0.523,0.3201)(0.525,0.3551)(0.528,0.2839)(0.529,0.3111)(0.531,0.3048)(0.533,0.3632)(0.535,0.3954)(0.536,0.4270)(0.538,0.4536)(0.540,0.4442)(0.542,0.4099)(0.544,0.4223)(0.546,0.4127)(0.548,0.4221)(0.551,0.3845)(0.553,0.3444)(0.555,0.3343)(0.557,0.3746)(0.559,0.3695)(0.561,0.3843)(0.562,0.4024)(0.564,0.4741)(0.567,0.3732)(0.569,0.3690)(0.570,0.4276)(0.572,0.4734)(0.574,0.4550)(0.575,0.5000)(0.578,0.4723)(0.580,0.4282)(0.582,0.4559)(0.585,0.3808)(0.588,0.2477)(0.589,0.3162)(0.591,0.3967)(0.592,0.4321)(0.594,0.4241)(0.596,0.4042)(0.598,0.4119)(0.600,0.4562)(0.602,0.4033)(0.604,0.4400)(0.605,0.5445)(0.607,0.5814)(0.608,0.6034)(0.610,0.6619)(0.612,0.6472)(0.615,0.5957)(0.617,0.5941)(0.618,0.6246)(0.620,0.6730)(0.622,0.6069)(0.624,0.6918)(0.626,0.6817)(0.628,0.6799)(0.630,0.6992)(0.632,0.6860)(0.633,0.7851)(0.635,0.7576)(0.637,0.7651)(0.638,0.8113)(0.640,0.8113)(0.643,0.7109)(0.645,0.7102)(0.648,0.6836)(0.650,0.6958)(0.651,0.7008)(0.654,0.6011)(0.657,0.5785)(0.659,0.5195)(0.662,0.4787)(0.664,0.4494)(0.666,0.4878)(0.668,0.4850)(0.670,0.4708)(0.671,0.5143)(0.674,0.4785)(0.676,0.4570)(0.677,0.5413)(0.679,0.5703)(0.681,0.5549)(0.683,0.5592)(0.686,0.4972)(0.688,0.4958)(0.691,0.3954)(0.693,0.3444)(0.695,0.3758)(0.696,0.4247)(0.698,0.4871)(0.700,0.4405)(0.702,0.4467)(0.704,0.4294)(0.707,0.3885)(0.708,0.4739)(0.709,0.5432)(0.710,0.6001)(0.713,0.5421)(0.715,0.5133)(0.717,0.5436)(0.719,0.5568)(0.722,0.4830)(0.724,0.4250)(0.725,0.5035)(0.727,0.5595)(0.729,0.5634)(0.732,0.4397)(0.734,0.4948)(0.736,0.4939)(0.736,0.6001)(0.739,0.5918)(0.741,0.5972)(0.743,0.5989)(0.744,0.6583)(0.746,0.6558)(0.748,0.6873)(0.749,0.7442)(0.751,0.7432)(0.753,0.7745)(0.755,0.7653)(0.756,0.8044)(0.759,0.7869)(0.761,0.7788)(0.762,0.8015)(0.764,0.8529)(0.766,0.8792)(0.768,0.8993)(0.769,0.9297)(0.772,0.8658)(0.774,0.8538)(0.776,0.8809)(0.778,0.8767)(0.780,0.8724)(0.781,0.9232)(0.783,0.9844)(0.785,0.9731)(0.787,0.9904)(0.788,1.0204)(0.791,0.9497)(0.793,0.9624)(0.795,0.9183)(0.797,0.9385)(0.799,0.9078)(0.802,0.8943)(0.804,0.8516)(0.806,0.8424)(0.808,0.8316)(0.810,0.8758)(0.812,0.8680)(0.814,0.8690)(0.815,0.9005)(0.817,0.9301)(0.819,0.9398)(0.821,0.9626)(0.824,0.8710)(0.826,0.8683)(0.828,0.8108)(0.831,0.7617)(0.833,0.7828)(0.835,0.7293)(0.837,0.7292)(0.839,0.7227)(0.841,0.7391)(0.844,0.6653)(0.846,0.6761)(0.847,0.7042)(0.850,0.6848)(0.851,0.7116)(0.853,0.7150)(0.856,0.6512)(0.858,0.6693)(0.860,0.6960)(0.862,0.6948)(0.864,0.6668)(0.865,0.7180)(0.867,0.7326)(0.869,0.7955)(0.871,0.7972)(0.872,0.8508)(0.875,0.7431)(0.877,0.7655)(0.878,0.8060)(0.881,0.7656)(0.882,0.8096)(0.885,0.7536)(0.887,0.7652)(0.889,0.7404)(0.890,0.8040)(0.893,0.7695)(0.895,0.7759)(0.897,0.7739)(0.899,0.7428)(0.902,0.6318)(0.904,0.6610)(0.906,0.6332)(0.909,0.5423)(0.911,0.5239)(0.913,0.5235)(0.915,0.5598)(0.917,0.5451)(0.919,0.5516)(0.921,0.5522)(0.922,0.6075)(0.925,0.5871)(0.926,0.6655)(0.928,0.6408)(0.931,0.5913)(0.934,0.4703)(0.936,0.4705)(0.938,0.4558)(0.939,0.5138)(0.942,0.4245)(0.944,0.4261)(0.946,0.4632)(0.948,0.4093)(0.951,0.3976)(0.953,0.3839)(0.956,0.2818)(0.958,0.2224)(0.960,0.2123)(0.963,0.1575)(0.966,0.0312)(0.968,0.0961)(0.969,0.1523)(0.971,0.1937)(0.973,0.1871)(0.975,0.1160)(0.977,0.1083)(0.979,0.1086)(0.982,0.0750)(0.984,0.0688)(0.986,0.0820)(0.988,0.0491)(0.990,0.0768)(0.992,0.0530)(0.995,-0.018)(0.997,-0.024)(0.999,-0.012)(1,0)
		};
		
		
		\end{axis}
			\end{tikzpicture}
		\end{subfigure}}
		\centering{}\resizebox{.32\linewidth}{!}{
		\begin{subfigure}[b]{.75\linewidth}
			\begin{tikzpicture} 
			\begin{axis} 
		[
		ymin=-1.7,
		ymax=1.7,
		xmin=0,
		xmax=1,
		xlabel={\LARGE $F(t)$},
		ylabel={\LARGE $\sqrt{n}(F_n(t)-F(t))$},
		width=12cm,
		height=7cm,
		axis on top=true,
		axis x line=middle,
		axis y line=middle,	
		legend style = {at={(1,.1)}, anchor=south east}
		]
		
		\addplot[
		dashed,		
		]
		coordinates{
		(0,1.2221)(1,1.2221)
		};
		\addplot[
		dashed,		
		]
		coordinates{
		(0,-1.2221)(1,-1.2221)
		};
		
		\addplot[
		solid,
		color=black,
		]
		coordinates {
		(0,0)(0.001,-0.002)(0.002,0.0129)(0.005,-0.053)(0.007,-0.013)(0.009,-0.048)(0.011,-0.023)(0.012,0.0736)(0.014,0.0082)(0.017,-0.001)(0.018,0.0434)(0.021,-0.018)(0.024,-0.103)(0.026,-0.112)(0.028,-0.134)(0.029,-0.083)(0.032,-0.126)(0.034,-0.179)(0.037,-0.230)(0.038,-0.147)(0.040,-0.196)(0.044,-0.327)(0.046,-0.345)(0.048,-0.320)(0.050,-0.322)(0.052,-0.395)(0.055,-0.418)(0.056,-0.374)(0.059,-0.485)(0.061,-0.465)(0.063,-0.445)(0.065,-0.470)(0.068,-0.524)(0.070,-0.502)(0.071,-0.465)(0.073,-0.473)(0.075,-0.480)(0.078,-0.544)(0.080,-0.549)(0.082,-0.544)(0.084,-0.502)(0.086,-0.539)(0.088,-0.537)(0.090,-0.507)(0.092,-0.527)(0.094,-0.563)(0.096,-0.579)(0.099,-0.645)(0.101,-0.698)(0.103,-0.659)(0.105,-0.678)(0.107,-0.672)(0.109,-0.623)(0.112,-0.700)(0.113,-0.633)(0.115,-0.600)(0.116,-0.557)(0.117,-0.495)(0.119,-0.478)(0.121,-0.454)(0.123,-0.419)(0.125,-0.475)(0.127,-0.465)(0.129,-0.466)(0.132,-0.509)(0.133,-0.499)(0.135,-0.430)(0.136,-0.363)(0.138,-0.364)(0.141,-0.437)(0.143,-0.417)(0.145,-0.404)(0.146,-0.396)(0.149,-0.429)(0.150,-0.375)(0.153,-0.443)(0.154,-0.377)(0.156,-0.331)(0.158,-0.321)(0.160,-0.314)(0.162,-0.367)(0.164,-0.313)(0.167,-0.458)(0.168,-0.394)(0.171,-0.426)(0.173,-0.407)(0.174,-0.380)(0.178,-0.508)(0.179,-0.479)(0.181,-0.413)(0.183,-0.403)(0.185,-0.434)(0.188,-0.511)(0.190,-0.506)(0.191,-0.435)(0.193,-0.445)(0.195,-0.411)(0.197,-0.429)(0.199,-0.425)(0.200,-0.395)(0.203,-0.467)(0.205,-0.427)(0.206,-0.387)(0.208,-0.324)(0.210,-0.302)(0.211,-0.255)(0.212,-0.194)(0.214,-0.185)(0.217,-0.242)(0.219,-0.218)(0.221,-0.222)(0.223,-0.214)(0.224,-0.165)(0.226,-0.155)(0.229,-0.260)(0.232,-0.311)(0.234,-0.307)(0.235,-0.284)(0.237,-0.273)(0.239,-0.269)(0.241,-0.296)(0.244,-0.325)(0.245,-0.298)(0.248,-0.392)(0.251,-0.416)(0.253,-0.451)(0.254,-0.398)(0.256,-0.380)(0.258,-0.377)(0.260,-0.378)(0.262,-0.373)(0.264,-0.335)(0.266,-0.359)(0.268,-0.308)(0.270,-0.316)(0.272,-0.375)(0.275,-0.434)(0.277,-0.427)(0.279,-0.431)(0.280,-0.372)(0.282,-0.387)(0.285,-0.475)(0.288,-0.548)(0.291,-0.613)(0.293,-0.626)(0.295,-0.603)(0.296,-0.518)(0.298,-0.534)(0.299,-0.489)(0.301,-0.484)(0.303,-0.455)(0.305,-0.405)(0.306,-0.361)(0.308,-0.363)(0.310,-0.317)(0.312,-0.328)(0.313,-0.281)(0.316,-0.304)(0.317,-0.248)(0.319,-0.206)(0.321,-0.286)(0.323,-0.203)(0.324,-0.170)(0.326,-0.185)(0.328,-0.189)(0.330,-0.175)(0.332,-0.174)(0.334,-0.198)(0.337,-0.229)(0.339,-0.239)(0.341,-0.273)(0.343,-0.268)(0.345,-0.274)(0.347,-0.213)(0.350,-0.356)(0.352,-0.339)(0.354,-0.315)(0.356,-0.311)(0.358,-0.322)(0.360,-0.359)(0.361,-0.293)(0.363,-0.299)(0.365,-0.257)(0.367,-0.243)(0.368,-0.176)(0.370,-0.170)(0.372,-0.106)(0.373,-0.058)(0.376,-0.132)(0.377,-0.055)(0.379,-0.011)(0.381,-0.027)(0.383,-0.073)(0.385,-0.019)(0.386,0.0470)(0.388,0.0397)(0.391,-0.003)(0.393,-0.060)(0.395,-0.040)(0.397,-0.044)(0.399,-0.092)(0.401,-0.053)(0.404,-0.159)(0.406,-0.141)(0.407,-0.071)(0.410,-0.123)(0.411,-0.094)(0.414,-0.140)(0.416,-0.156)(0.418,-0.135)(0.420,-0.163)(0.422,-0.108)(0.423,-0.063)(0.425,-0.056)(0.427,-0.019)(0.428,0.0134)(0.430,0.0716)(0.432,0.0998)(0.433,0.1042)(0.436,0.0728)(0.437,0.1486)(0.439,0.1694)(0.440,0.2426)(0.442,0.2849)(0.444,0.2904)(0.446,0.2746)(0.447,0.3117)(0.449,0.3594)(0.451,0.3554)(0.453,0.3930)(0.454,0.4529)(0.456,0.4823)(0.458,0.4237)(0.460,0.4491)(0.463,0.3518)(0.465,0.3465)(0.466,0.4020)(0.470,0.2743)(0.472,0.2876)(0.473,0.3276)(0.475,0.3828)(0.477,0.3662)(0.479,0.3208)(0.482,0.2909)(0.484,0.2646)(0.487,0.1999)(0.488,0.2634)(0.490,0.2760)(0.492,0.2836)(0.494,0.2759)(0.496,0.2921)(0.498,0.2920)(0.500,0.2572)(0.503,0.1902)(0.504,0.2527)(0.506,0.2736)(0.508,0.2730)(0.509,0.3335)(0.511,0.3517)(0.513,0.3143)(0.516,0.2617)(0.518,0.2849)(0.520,0.2345)(0.522,0.2017)(0.525,0.1775)(0.527,0.1601)(0.529,0.1983)(0.531,0.1674)(0.533,0.1231)(0.536,0.0870)(0.538,0.0653)(0.540,0.0585)(0.542,0.0545)(0.544,0.0306)(0.546,0.0713)(0.549,-0.031)(0.550,0.0055)(0.552,0.0361)(0.554,0.0043)(0.556,0.0116)(0.559,-0.080)(0.562,-0.135)(0.564,-0.141)(0.565,-0.067)(0.567,-0.064)(0.570,-0.110)(0.571,-0.047)(0.573,-0.066)(0.575,-0.082)(0.577,-0.075)(0.579,-0.036)(0.581,-0.089)(0.583,-0.081)(0.586,-0.147)(0.589,-0.205)(0.591,-0.201)(0.593,-0.296)(0.596,-0.323)(0.598,-0.399)(0.600,-0.370)(0.603,-0.420)(0.605,-0.435)(0.607,-0.432)(0.609,-0.472)(0.611,-0.476)(0.613,-0.435)(0.615,-0.468)(0.618,-0.554)(0.620,-0.526)(0.622,-0.568)(0.624,-0.529)(0.626,-0.564)(0.628,-0.525)(0.629,-0.490)(0.631,-0.499)(0.633,-0.460)(0.635,-0.406)(0.636,-0.398)(0.638,-0.381)(0.640,-0.356)(0.642,-0.340)(0.644,-0.338)(0.646,-0.374)(0.649,-0.448)(0.652,-0.574)(0.654,-0.553)(0.656,-0.563)(0.658,-0.555)(0.660,-0.534)(0.661,-0.470)(0.663,-0.490)(0.666,-0.583)(0.668,-0.569)(0.670,-0.586)(0.672,-0.564)(0.673,-0.493)(0.674,-0.392)(0.677,-0.405)(0.679,-0.483)(0.682,-0.514)(0.683,-0.472)(0.685,-0.492)(0.687,-0.417)(0.689,-0.420)(0.691,-0.430)(0.693,-0.477)(0.696,-0.532)(0.698,-0.576)(0.700,-0.584)(0.702,-0.581)(0.704,-0.552)(0.705,-0.493)(0.707,-0.441)(0.710,-0.504)(0.711,-0.473)(0.714,-0.516)(0.716,-0.510)(0.718,-0.572)(0.720,-0.576)(0.722,-0.525)(0.723,-0.474)(0.725,-0.468)(0.727,-0.478)(0.729,-0.472)(0.731,-0.404)(0.732,-0.384)(0.734,-0.355)(0.736,-0.322)(0.738,-0.367)(0.740,-0.372)(0.742,-0.343)(0.745,-0.430)(0.747,-0.422)(0.748,-0.390)(0.750,-0.376)(0.754,-0.500)(0.755,-0.497)(0.757,-0.417)(0.759,-0.481)(0.761,-0.495)(0.763,-0.426)(0.765,-0.403)(0.767,-0.426)(0.769,-0.478)(0.771,-0.443)(0.773,-0.440)(0.775,-0.417)(0.777,-0.402)(0.778,-0.389)(0.781,-0.471)(0.783,-0.469)(0.785,-0.436)(0.787,-0.446)(0.789,-0.467)(0.790,-0.384)(0.792,-0.340)(0.793,-0.293)(0.796,-0.335)(0.798,-0.385)(0.800,-0.385)(0.802,-0.351)(0.804,-0.362)(0.808,-0.518)(0.810,-0.553)(0.812,-0.505)(0.814,-0.514)(0.816,-0.511)(0.817,-0.494)(0.820,-0.541)(0.822,-0.576)(0.824,-0.522)(0.826,-0.513)(0.827,-0.481)(0.829,-0.433)(0.831,-0.402)(0.833,-0.417)(0.835,-0.407)(0.838,-0.534)(0.840,-0.540)(0.842,-0.581)(0.845,-0.603)(0.846,-0.587)(0.849,-0.601)(0.850,-0.588)(0.852,-0.583)(0.855,-0.600)(0.856,-0.545)(0.858,-0.557)(0.859,-0.474)(0.861,-0.477)(0.863,-0.437)(0.865,-0.474)(0.867,-0.434)(0.870,-0.517)(0.872,-0.508)(0.873,-0.493)(0.875,-0.455)(0.877,-0.471)(0.880,-0.513)(0.881,-0.493)(0.883,-0.476)(0.885,-0.477)(0.887,-0.404)(0.888,-0.388)(0.891,-0.422)(0.892,-0.345)(0.893,-0.257)(0.895,-0.254)(0.897,-0.223)(0.898,-0.147)(0.900,-0.143)(0.902,-0.129)(0.904,-0.135)(0.906,-0.157)(0.908,-0.134)(0.909,-0.083)(0.911,-0.033)(0.913,-0.087)(0.915,-0.058)(0.917,-0.050)(0.919,-0.017)(0.921,-0.072)(0.924,-0.135)(0.926,-0.103)(0.927,-0.061)(0.930,-0.124)(0.931,-0.090)(0.934,-0.185)(0.936,-0.114)(0.938,-0.150)(0.940,-0.167)(0.942,-0.172)(0.944,-0.116)(0.946,-0.145)(0.948,-0.127)(0.950,-0.113)(0.951,-0.072)(0.953,-0.044)(0.956,-0.159)(0.957,-0.089)(0.959,-0.028)(0.961,-0.015)(0.963,-0.031)(0.964,0.0069)(0.967,-0.042)(0.969,-0.030)(0.970,0.0318)(0.973,-0.041)(0.975,-0.060)(0.977,-0.076)(0.979,-0.073)(0.981,-0.093)(0.983,-0.096)(0.985,-0.056)(0.987,-0.080)(0.990,-0.119)(0.991,-0.042)(0.992,0.0152)(0.994,0.0193)(0.996,0.0241)(0.999,-0.025)(1,0)
		};
		
		
		\end{axis}
			\end{tikzpicture}
		\end{subfigure}}
\caption{\footnotesize
Empirical distribution functions for spectrally negative infinite variation 
stable process with parameters $\rho=1/\alpha$ and, from left to right, 
$\alpha=1.1$, $\alpha=1.5$ and $\alpha=1.9$. The top graphs show the 
empirical distribution functions $F_N$ for $N=10^4$ samples and compare it 
to the distribution function 
$F=\overline{S}(\alpha,\rho)=S^+(\alpha,\rho)$~\citep[Thm~1]{MR3033593}. 
The bottom graphs show $s\mapsto\sqrt{N}(F_N\circ F^{-1}(s)-s)$ on $[0,1]$
(or equivalently, the curve $t\mapsto(F(t),\sqrt{N}(F_N(t)-F(t)))$ in 
$\mathbb{R}^2$ for $t>0$), which converges weakly to a Brownian bridge. 
The dashed lines are the $0.05$ and $0.95$ quantiles of the Kolmogorov-Smirnov 
statistic, derived from the distribution of the signed maximum modulus of the Brownian bridge. 
}
\label{fig:ecdf}
\end{figure}
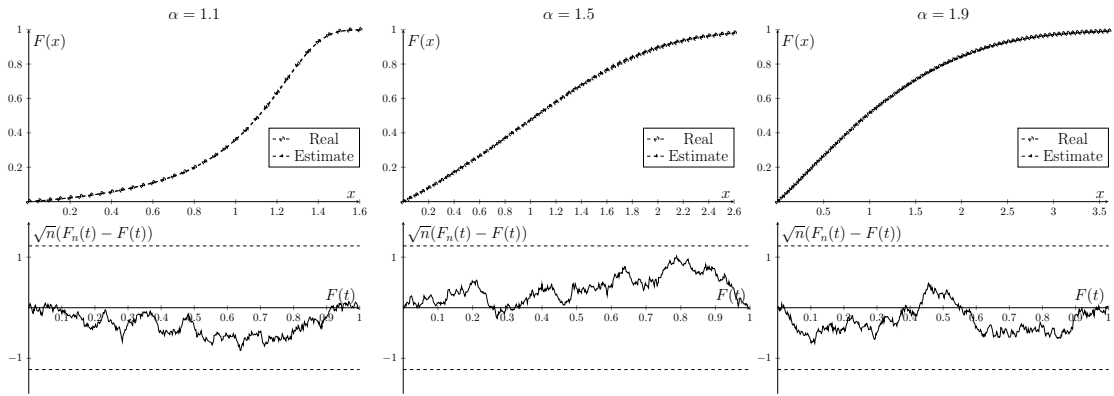

\begin{algorithm}
	\caption{Perfect simulation of $X_0\overset{d}{=}\overline{Y}_1$}
	\label{alg:FullAlgorithm}
	\begin{algorithmic}[1]
		\Require{Parameters $0<\delta<d<\frac{1}{\alpha\rho}$,
			$\kappa>\max\{\frac{\log(2)}{3\eta},\frac{1}{\alpha\rho}\}$, 
			$\gamma>0$ and $\Delta(0)\in\mathcal{Z}^0$}
		\State{Put $x:=\infty$, $t:=1$, $s:=\Delta(0)$ and $m:=n:=\Delta(0)+1$}
		\Comment{$x$ is an upper bound on $\{C_k\}_{k\in\mathcal{Z}^{\Delta(0)}}$}
		\State{Sample $\{\Theta_k\}_{k\in\mathcal{Z}^0_{\Delta(0)}}$}
		\Comment{Recall its definition in Section~\ref{sec:Backward_Sim}, paragraph 1}
		\Loop
		\State{Sample $\big(\chi_{m-1},
			\{S_k\}_{k\in\mathcal{Z}_{\chi_{m-1}}^{s}}\big)$}
		\Comment{Algorithm~\ref{alg:Algorithm 3}}
		\While{$n = m$ or $\Delta(t) > \chi_{m-1}$}
		\State{Sample $C_{\Delta(t)}, \ldots, C_{T^{\Delta(t)}_{-2\kappa}}$ conditional on 
			$\{R_{\Delta(t)}<x-C_{\Delta(t)}\}$}
		\Comment{Algorithm~\ref{alg:SubAlgorithm 5-3}}
		\State{Put $\Delta(t+1):=T^{\Delta(t)}_{-2\kappa}$ and $t:=t+1$}
		\Comment{Recall its definition in~\eqref{eq:T_x^k}}
		\State{Sample $1_{\{R_{\Delta(t)}>\kappa\}}$ given $\{R_{\Delta(t)} < x-C_{\Delta(t)}\}$}
		\Comment{Algorithm~\ref{alg:SubAlgorithm 5-1}}
		\If{$1_{\{R_{\Delta(t)}>\kappa\}}=0$}
		\State{Compute $\{ R_{k}\}_{k\in\mathcal{Z}^n_{\Delta(t-1)}}$ and 
			put $n:=\Delta(t-1)$ and $x:=C_{\Delta(t)}+\kappa$}
		\Else
		\State{Sample $C_{\Delta(t)-1}, \ldots, C_{T_\kappa^{\Delta(t)}}$ from $\mathbb{P}^\eta$}
		\Comment{Algorithm~\ref{alg:SubAlgorithm 5-2}}
		\State{Put $\Delta(t+1):=T_\kappa^{\Delta(t)}$ and $t:= t+1$}
		\EndIf
		\EndWhile
		\State{Sample $\{(U_k,\Lambda_k)\}_{k\in\mathcal{Z}_{\chi_{m-1}}^{s}}$ from $\{F_{k}\}_{k\in\mathcal{Z}_{\chi_{m-1}}^{s}}$ 
			and put $s:=\chi_{m-1}$}
		\Comment{Algorithm~\ref{alg:Algorithm 2-1}}
		\State{Compute $D_{m-1}$ and put $m:=m-1$}
		\Comment{Recall its definition in~\eqref{eq:DominatingProcess}}
		\If{$D_{m}\leq a(\Theta_{m})$}
		\State{\Return $X_0:=\psi(\cdots\psi(D_{m},\Theta_{m}),\cdots,\Theta_{-1})$}
		\Comment{In this case $\sigma=m$}
		\ElsIf{$m=\Delta(0)$}
		\State{Put $X_0:=\psi(\cdots\psi(D_m,\Theta_m),\cdots,\Theta_{-1})$}
		\If{coalescence was detected}
		\State{\Return $X_0$}
		\EndIf
		\EndIf
		\EndLoop
	\end{algorithmic}
\end{algorithm}

\subsection{\label{subsec:parameter_fit}Parameter choice and numerical performance}
As explicitly stated in Algorithm~\ref{alg:FullAlgorithm}, and if 
one allows $m^\ast$ (recall its definition in paragraph~1, p.~14) 
to vary over $\lfloor\frac{1}{\delta\gamma}\log\mathbb{E} S_1^\gamma\rfloor^++\mathbb{N}$,
our simulation procedure has 6 different parameters. A full 
theoretical optimisation is infeasible as it heavily depends on, 
among other things, the way the algorithm is coded, the computational 
cost of simulating each variable, the cost of each calculation, 
memory accessing cost, the quality and state of the RAM and $(\alpha,\rho)$. 
However, for the sake of presenting its practical feasibility, 
we have implemented the algorithm in the Julia programming language 
(see~\cite{Jorge_GitHub}) and ran it on a macOS Mojave 10.14.3 (18D109)
with a 4.2 GHz Intel\textregistered Core\texttrademark i7 processor and a 
8 GB 2400 MHz DDR4 memory. This implementation is far from optimal, but still 
outputs $10^4$ samples in approximately $1.15$ seconds (without multithreading) 
for the suggested parameters $(d,\delta,\gamma,\kappa,\Delta(0),m^\ast)=\varpi$ where 
\[
\varpi=\varpi(\alpha,\rho)
=\left(\frac{2}{3\alpha\rho},\frac{1}{3\alpha\rho},\frac{19}{20}\alpha,
4+\max\left\{\frac{\log(2)}{3\eta(\frac{2}{3\alpha\rho})},\frac{1}{\alpha\rho}\right\},40,
12+\left\lfloor\frac{60}{19}\rho\log\mathbb{E} S_1^{\frac{19}{20}\alpha}\right\rfloor^+\right).
\] 
This performance varies slightly for different choices of $(\alpha,\rho)$.
To put things in perspective, Algorithm~\ref{alg:Algorithm 2-1} outputs,
for the parameter choice $\varpi$, $10^6$ samples in approximately $0.4322$ 
seconds and drawing $10^6$ samples from $S^+(\alpha,\rho)$ takes $0.1833$ seconds.
On the other hand, the first iteration of Algorithm~\ref{alg:Algorithm 3} 
(which simulates the indicators $\{I_k^0\}_{k\in\mathcal{Z}^0_{\chi_0}}$ 
and the conditionally positive stable random variables 
$\{S_k\}_{k\in\mathcal{Z}^0_{\chi_0}}$) simulates $10^4$ samples in about 
$0.8125$ seconds and is, although fast, the most computationally costly 
component of Algorithm~\ref{alg:FullAlgorithm}. 
The main sources of this cost are the calculation of the probabilities 
$\{p(m)\}$ (see their definition in~\eqref{eq:probTails}) and the simulation of 
$\{S_k\}_{k\in\mathcal{Z}^0_{\chi_0}}$ conditioned on the values 
of $\{I_k^0\}_{k\in\mathcal{Z}^0_{\chi_0}}$.

Next we show the local marginal behaviour of the number of samples outputted 
with confidence intervals, for a few different choices of parameters 
$(\alpha,\rho)$. We will see that, although $\varpi$ may not be optimal, 
it is a simple and yet efficient choice. Moreover, the variation in 
performance for parameters close to this one is small, thus showing 
that this choice is relatively robust.

It should be noted that the data presented in Figure~\ref{fig:marginals} 
is dependent on the characteristics of the hardware and software used.
Hence, these exact numbers are not easily replicated. For instance, these 
times scale sub-linearly as a function of the batch size, and are not 
replicated despite using the garbage collector and the same random seed. 
It is readily seen that the exact value of the parameters $d$, $\delta$, 
$\Delta(0)$ and $m^\ast$ is not too important in so far as they remain at a 
reasonable distance from their boundaries (where $\infty$ is a right-boundary 
for $\Delta(0)$ and $m^\ast$). 
The value of $\kappa$ is slightly more sensitive, as is $\gamma$. 
Other choices of $(\alpha,\rho)$ have slightly different behaviours. 
The shapes of these curves are similar, but the apparent minima change. 
Thus, we argue that $\varpi$ is a simple yet sensible choice.

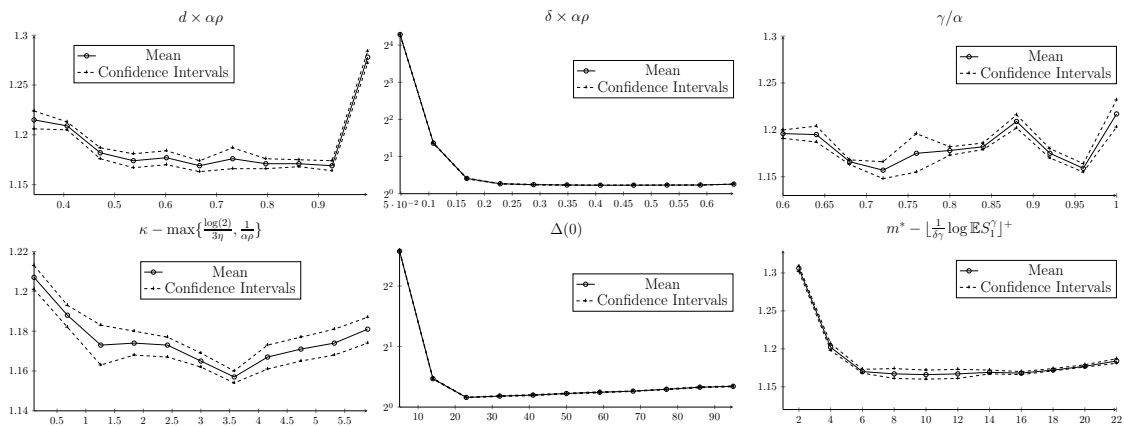
\begin{figure}[H]
	\centering{}\resizebox{.32\linewidth}{!}{
		\begin{subfigure}[h]{.78\linewidth}
			\begin{tikzpicture} 
			\begin{axis} 
			[
			ymax = 1.3,
			ymin = 1.14,
			title = {\LARGE$d\times \alpha\rho$},
			width=12.5cm,
			height=6.8cm,
			axis on top=true,
			axis x line=bottom,
			axis y line=left,
			legend style = {at={(.6,.7)}, anchor=south east}
			]
			
			\addplot[
			solid,
			mark=o,
			color=black,
			]
			coordinates {
				(0.3417,1.215)(0.4067,1.209)(0.4717,1.182)(0.5367,1.174)(0.6017,1.177)(0.6667,1.169)(0.7317,1.176)(0.7967,1.171)(0.8617,1.171)(0.9267,1.169)(0.9967,1.278)
			};
			
			\addplot[
			dashed,
			mark=+,
			color=black,
			]
			coordinates {
				(0.3417,1.224)(0.4067,1.213)(0.4717,1.187)(0.5367,1.181)(0.6017,1.184)(0.6667,1.174)(0.7317,1.187)(0.7967,1.176)(0.8617,1.175)(0.9267,1.174)(0.9967,1.284)
			};
			
			\addplot[
			dashed,
			mark=+,
			color=black,
			]
			coordinates {
				(0.3417,1.206)(0.4067,1.205)(0.4717,1.176)(0.5367,1.167)(0.6017,1.170)(0.6667,1.163)(0.7317,1.166)(0.7967,1.166)(0.8617,1.168)(0.9267,1.164)(0.9967,1.272)
			};
			\legend {\LARGE Mean,\LARGE Confidence Intervals};
			\end{axis}
			\end{tikzpicture}
	\end{subfigure}}
	\centering{}\resizebox{.32\linewidth}{!}{
		\begin{subfigure}[h]{.78\linewidth}
			\begin{tikzpicture} 
			\begin{semilogyaxis} 
			[
			log basis y=2,
			ymin = 1,
			title = {\LARGE$\delta\times \alpha\rho$},
			width=12.5cm,
			height=6.8cm,
			axis on top=true,
			axis x line=bottom,
			axis y line=left,
			legend style = {at={(1,.6)}, anchor=south east}
			]
			
			\addplot[
			solid,
			mark=o,
			color=black,
			]
			coordinates {
				(0.04833,19.49)(0.1083,2.565)(0.1683,1.328)(0.2283,1.203)(0.2883,1.184)(0.3483,1.175)(0.4083,1.171)(0.4683,1.172)(0.5283,1.174)(0.5883,1.176)(0.6483,1.195)		
			};
			
			\addplot[
			dashed,
			mark=+,
			color=black,
			]
			coordinates {
				(0.04833,19.55)(0.1083,2.583)(0.1683,1.338)(0.2283,1.207)(0.2883,1.191)(0.3483,1.180)(0.4083,1.172)(0.4683,1.173)(0.5283,1.175)(0.5883,1.178)(0.6483,1.197)
			};
			
			\addplot[
			dashed,
			mark=+,
			color=black,
			]
			coordinates {
				(0.04833,19.44)(0.1083,2.548)(0.1683,1.318)(0.2283,1.199)(0.2883,1.176)(0.3483,1.170)(0.4083,1.170)(0.4683,1.171)(0.5283,1.172)(0.5883,1.174)(0.6483,1.193)
			};
			\legend {\LARGE Mean,\LARGE Confidence Intervals};
			\end{semilogyaxis}
			\end{tikzpicture}
	\end{subfigure}}
	\centering{}\resizebox{.32\linewidth}{!}{
		\begin{subfigure}[h]{.78\linewidth}
			\begin{tikzpicture} 
			\begin{axis} 
			[
			ymin=1.13,
			ymax=1.3,
			xmin=0.6,
			xmax=1,
			title={\LARGE$\gamma/\alpha$},
			width=12.5cm,
			height=6.8cm,
			axis on top=true,
			axis x line=bottom,
			axis y line=left,
			legend style = {at={(1,.7)}, anchor=south east}
			]
			
			\addplot[
			solid,
			mark=o,
			color=black,
			]
			coordinates {
				(0.6,1.196)(0.64,1.195)(0.68,1.166)(0.72,1.157)(0.76,1.175)(0.8,1.178)(0.84,1.182)(0.88,1.209)(0.92,1.175)(0.96,1.159)(.9999,1.217)
			};
			
			\addplot[
			dashed,
			mark=+,
			color=black,
			]
			coordinates {
				(0.6,1.2)(0.64,1.204)(0.68,1.168)(0.72,1.166)(0.76,1.196)(0.8,1.182)(0.84,1.186)(0.88,1.216)(0.92,1.18)(0.96,1.164)(.9999,1.232)
			};
			
			\addplot[
			dashed,
			mark=+,
			color=black,
			]
			coordinates {
				(0.6,1.191)(0.64,1.187)(0.68,1.163)(0.72,1.148)(0.76,1.155)(0.8,1.173)(0.84,1.179)(0.88,1.202)(0.92,1.17)(0.96,1.155)(.9999,1.203)
			};
			\legend {\LARGE Mean,\LARGE Confidence Intervals};
			\end{axis}
			\end{tikzpicture}
	\end{subfigure}}
	\centering{}\resizebox{.32\linewidth}{!}{
		\begin{subfigure}[h]{.78\linewidth}
			\begin{tikzpicture} 
			\begin{axis} 
			[
			ymin=1.14,
			ymax=1.22,
			xmin=.1,
			xmax=5.9,
			title={\LARGE$\kappa-\max\{\frac{\log(2)}{3\eta},\frac{1}{\alpha\rho}\}$},
			width=12.5cm,
			height=6.8cm,
			axis on top=true,
			axis x line=bottom,
			axis y line=left,
			legend style = {at={(.8,.7)}, anchor=south east}
			]
			
			\addplot[
			solid,
			mark=o,
			color=black,
			]
			coordinates {
				(0.1,1.207)(0.68,1.188)(1.26,1.173)(1.84,1.174)(2.42,1.173)(3.0,1.165)(3.58,1.157)(4.16,1.167)(4.74,1.171)(5.32,1.174)(5.9,1.181)
			};
			
			\addplot[
			dashed,
			mark=+,
			color=black,
			]
			coordinates {
				(0.1,1.213)(0.68,1.193)(1.26,1.183)(1.84,1.18)(2.42,1.177)(3.0,1.169)(3.58,1.160)(4.16,1.173)(4.74,1.177)(5.32,1.181)(5.9,1.187)
			};
			
			\addplot[
			dashed,
			mark=+,
			color=black,
			]
			coordinates {
				(0.1,1.201)(0.68,1.182)(1.26,1.163)(1.84,1.168)(2.42,1.167)(3.0,1.162)(3.58,1.154)(4.16,1.161)(4.74,1.165)(5.32,1.168)(5.9,1.174)
			};
			\legend {\LARGE Mean,\LARGE Confidence Intervals};
			\end{axis}
			\end{tikzpicture}
	\end{subfigure}}
	\centering{}\resizebox{.32\linewidth}{!}{
		\begin{subfigure}[h]{.78\linewidth}
			\begin{tikzpicture} 
			\begin{semilogyaxis} 
			[
			log basis y=2,
			ymin=1,
			ymax=6.2,
			xmin=5,
			xmax=95,
			title={\LARGE$\Delta(0)$},
			width=12.5cm,
			height=6.8cm,
			axis on top=true,
			axis x line=bottom,
			axis y line=left,
			legend style = {at={(1,.6)}, anchor=south east}
			]
			
			\addplot[
			solid,
			mark=o,
			color=black,
			]
			coordinates {
				(5,5.958)(14,1.384)(23,1.117)(32,1.134)(41,1.147)(50,1.168)(59,1.185)(68,1.2)(77,1.226)(86,1.255)(95,1.268)
			};
			
			\addplot[
			dashed,
			mark=+,
			color=black,
			]
			coordinates {
				(5,5.99)(14,1.393)(23,1.12)(32,1.14)(41,1.153)(50,1.173)(59,1.192)(68,1.202)(77,1.231)(86,1.262)(95,1.274)
			};
			
			\addplot[
			dashed,
			mark=+,
			color=black,
			]
			coordinates {
				(5,5.925)(14,1.376)(23,1.113)(32,1.128)(41,1.14)(50,1.162)(59,1.177)(68,1.197)(77,1.22)(86,1.248)(95,1.262)
			};
			\legend {\LARGE Mean,\LARGE Confidence Intervals};
			\end{semilogyaxis}
			\end{tikzpicture}
	\end{subfigure}}
	\centering{}\resizebox{.32\linewidth}{!}{
		\begin{subfigure}[h]{.78\linewidth}
			\begin{tikzpicture} 
			\begin{axis} 
			[
			log basis y=2,
			ymin = 1.12,
			ymax = 1.33,
			xmin=1,
			xmax=22,
			title={\LARGE$m^\ast-\lfloor\frac{1}{\delta\gamma}\log\mathbb{E} S_1^\gamma\rfloor^+$},
			width=12.5cm,
			height=6.8cm,
			axis on top=true,
			axis x line=bottom,
			axis y line=left,
			legend style = {at={(1,.7)}, anchor=south east}
			]
			
			\addplot[
			solid,
			mark=o,
			color=black,
			]
			coordinates {
				(2,1.306)(4,1.202)(6,1.17)(8,1.167)(10,1.166)(12,1.167)(14,1.169)(16,1.168)(18,1.172)(20,1.177)(22,1.184)
			};
			
			\addplot[
			dashed,
			mark=+,
			color=black,
			]
			coordinates {
				(2,1.309)(4,1.207)(6,1.173)(8,1.174)(10,1.172)(12,1.173)(14,1.172)(16,1.17)(18,1.174)(20,1.179)(22,1.187)
			};
			
			\addplot[
			dashed,
			mark=+,
			color=black,
			]
			coordinates {
				(2,1.302)(4,1.198)(6,1.168)(8,1.161)(10,1.16)(12,1.161)(14,1.167)(16,1.166)(18,1.171)(20,1.176)(22,1.181)
			};
			\legend {\LARGE Mean,\LARGE Confidence Intervals};
			\end{axis}
		\end{tikzpicture}
	\end{subfigure}}
	\caption{\footnotesize
		Time taken (in seconds) to simulate $N=10^4$ samples of $\overline{S}(1.3,1/2)$ with parameters moving about $\varpi$. In each plot, one parameter moves and all others are kept constant at the respective value of $\varpi$. We took $100$ batches of samples, each with $N=10^4$ independent simulations, to construct asymptotic $95\%$ confidence intervals based on the central limit theorem.
	}
\label{fig:marginals}
\end{figure}

\section*{Acknowledgements}
JGC and AM are supported by The Alan Turing Institute under the EPSRC grant EP/N510129/1;
AM supported by EPSRC grant EP/P003818/1 and the Turing Fellowship funded by the Programme on Data-Centric Engineering of Lloyd's Register Foundation;
GUB supported by CoNaCyT grant FC-2016-1946 and UNAM-DGAPA-PAPIIT grant IN115217; 
JGC supported by CoNaCyT scholarship 2018-000009-01EXTF-00624. 
We thank Stephen Connor for the reference~\cite{MR3683256}.

\appendix

\section{\label{sec:StableSampling}Sampling the marginals of stable processes}

A L\'{e}vy process $Y=(Y_{t})_{t\in[0,\infty)}$ in $\mathbb{R}$ is \emph{strictly
stable} with index $\alpha\in(0,2]$ if for any constant $c\geq0$ the
processes $\left(Y_{ct}\right)_{t\in[0,\infty)}$ and $\left(c^{1/\alpha}Y_{t}\right)_{t\in[0,\infty)}$
have the same law. For brevity, we call $Y$ a \emph{stable process}. 
Sampling the increments of $Y$ hence reduces to sampling
$Y_{1}$. Using Zolotarev's (C) form~\citep{MR1745764}, up to a scaling constant the law of
$Y_{1}$ is parametrised by $\left(\alpha,\beta\right)\in\left(0,2\right]\times\left[-1,1\right]$
via 
\begin{eqnarray}
\mathbb{E} e^{itY_{1}} & = & \exp\left(-\left|t\right|^{\alpha}e^{-i\frac{\pi\alpha}{2}\theta\text{sgn}\left(t\right)}\right),\quad\text{where }t\in\mathbb{R},\ \theta=\beta\left(1_{\alpha\leq1}+\frac{\alpha-2}{\alpha}1_{\alpha>1}\right),\label{eq:C_form}
\end{eqnarray}
and $\text{sgn}(t)$ equals $1$ (resp. $-1$) if $t\geq0$ (resp.
$t<0$). The Mellin transform of $Y_{1}$ equals 
\begin{equation}
\mathbb{E} Y_{1}^{s}1_{Y_{1}>0}=\rho\frac{\Gamma\left(1+s\right)\Gamma\left(1-\frac{s}{\alpha}\right)}{\Gamma\left(1+s\rho\right)\Gamma\left(1-s\rho\right)},\label{eq:MellinStable}
\end{equation}
where $\rho=\frac{1+\theta}{2}$ and $\Gamma(\cdot)$ denotes the
gamma function (see~\citep{MR1745764} Section 5.6). Taking $s=0$
in~\eqref{eq:MellinStable} implies that the stable law is uniquely
determined by $\alpha$ and its positivity parameter $\rho=\mathbb{P}\left(Y_{1}>0\right)$.
If $\alpha>1$, 
the pair $\left(\alpha,\rho\right)\in(0,2]\times[0,1]$
must satisfy $\rho\in\left[1-\frac{1}{\alpha},\frac{1}{\alpha}\right]$,
since $\theta\in[1-\frac{2}{\alpha},\frac{2}{\alpha}-1]$.

Let $S\left(\alpha,\rho\right)$ and $S^{+}\left(\alpha,\rho\right)$
denote the laws of $Y_{1}$ and $Y_{1}$ conditioned on being positive,
respectively. As $\rho,\alpha\rho\in[0,1]$ and the Mellin transform
determines the law uniquely,~\eqref{eq:MellinStable} implies that
$\left(Z'/Z''\right)^{\rho}$ follows $S^{+}\left(\alpha,\rho\right)$,
where $Z'\sim S\left(\alpha\rho,1\right)$ and $Z''\sim S\left(\rho,1\right)$
are independent. Since $P'B+P''\left(1-B\right)$
follows $S\left(\alpha,\rho\right)$, 
where $P'\sim S^{+}\left(\alpha,\rho\right)$,
$P'\sim S^{+}\left(\alpha,1-\rho\right)$ and $B\sim Ber\left(\rho\right)$
are independent, 
we need only be able to simulate a positive stable random variable with law $S\left(\alpha,1\right)$
for any $\alpha\in(0,1]$. If $\alpha=1$, then by~\eqref{eq:C_form},
$Y_{1}$ is a constant equal to one. If $\alpha\in(0,1)$, Kanter's
factorisation states
\[
\left(\sin\left(\alpha\pi U\right)^{\alpha}\sin\left(\left(1-\alpha\right)\pi U\right)^{1-\alpha}/\sin\left(\pi U\right)\right)^{\frac{1}{\alpha}}E^{1-\frac{1}{\alpha}}\sim S\left(\alpha,1\right),
\]
where 
$E$ is exponential with mean one, independent of 
$U$, which  is uniform on $(0,1)$ 
(see~\citep[Sec~4.4]{MR1745764}). For
alternative ways of sampling from the laws $S\left(\alpha,\rho\right)$
and $S^{+}\left(\alpha,\rho\right)$ we refer to~\citep{MR3233961}.

\bibliographystyle{amsalpha}

\begin{thebibliography}{10}

\bibitem{MR2440923}
{\sc Bernyk, V., Dalang, R.~C. and Peskir, G.} (2008).
\newblock The law of the supremum of a stable {L}\'evy process with no negative
  jumps.
\newblock {\em Ann. Probab.\/} {\bf 36,} 1777--1789.

\bibitem{MR2932671}
{\sc Bernyk, V., Dalang, R.~C. and Peskir, G.} (2011).
\newblock Predicting the ultimate supremum of a stable {L}\'evy process with no
  negative jumps.
\newblock {\em Ann. Probab.\/} {\bf 39,} 2385--2423.

\bibitem{MR1406564}
{\sc Bertoin, J.} (1996).
\newblock {\em L\'evy processes} vol.~121 of {\em Cambridge Tracts in
  Mathematics}.
\newblock Cambridge University Press, Cambridge.

\bibitem{MR2865624}
{\sc Blanchet, J.~H. and Sigman, K.} (2011).
\newblock On exact sampling of stochastic perpetuities.
\newblock {\em J. Appl. Probab.\/} {\bf 48A,} 165--182.

\bibitem{MR3497380}
{\sc Buraczewski, D., Damek, E. and Mikosch, T.} (2016).
\newblock {\em Stochastic models with power-law tails}.
\newblock Springer Series in Operations Research and Financial Engineering.
  Springer, [Cham].

\bibitem{MR3024612}
{\sc Chi, Z.} (2012).
\newblock On exact sampling of nonnegative infinitely divisible random
  variables.
\newblock {\em Adv. in Appl. Probab.\/} {\bf 44,} 842--873.

\bibitem{Chi2}
{\sc Chi, Z.} (2012).
\newblock On exact sampling of the first passage event of {L}\'evy process with
  infinite {L}\'evy measure and bounded variation.
\newblock {\em Stochastic Processes and their Applications\/} {\bf 126,}
  1124--1144.

\bibitem{Chi1}
{\sc Chi, Z.} (2018).
\newblock Law and exact sampling of the first passage of a spectrally positive
  strictly stable process.
\newblock {\em arXiv:1801.06891v1\/}.

\bibitem{MR3683256}
{\sc Cloud, K. and Huber, M.} (2017).
\newblock Fast perfect simulation of {V}ervaat perpetuities.
\newblock {\em J. Complexity\/} {\bf 42,} 19--30.

\bibitem{ExactGralVervaatPerp}
{\sc Dassios, A., Lim, J. and Qu, Y.} (2019).
\newblock Exact simulation of generalised {V}ervaat perpetuities.
\newblock {\em Journal of Applied Probability\/}.

\bibitem{devroye:1986}
{\sc Devroye, L.} (1986).
\newblock {\em Non-Uniform Random Variate Generation(originally published
  with}.
\newblock Springer-Verlag.

\bibitem{MR1833738}
{\sc Devroye, L.} (2001).
\newblock Simulating perpetuities.
\newblock {\em Methodol. Comput. Appl. Probab.\/} {\bf 3,} 97--115.

\bibitem{MR2575452}
{\sc Devroye, L. and Fawzi, O.} (2010).
\newblock Simulating the {D}ickman distribution.
\newblock {\em Statist. Probab. Lett.\/} {\bf 80,} 242--247.

\bibitem{MR3233961}
{\sc Devroye, L. and James, L.} (2014).
\newblock On simulation and properties of the stable law.
\newblock {\em Stat. Methods Appl.\/} {\bf 23,} 307--343.

\bibitem{Devroye:2011:DCM:1899396.1899398}
{\sc Devroye, L. and James, L.~F.} (2011).
\newblock The double cftp method.
\newblock {\em ACM Trans. Model. Comput. Simul.\/} {\bf 21,} 10:1--10:20.

\bibitem{MR2402160}
{\sc Doney, R.~A.} (2008).
\newblock A note on the supremum of a stable process.
\newblock {\em Stochastics\/} {\bf 80,} 151--155.

\bibitem{MR1759244}
{\sc Ensor, K.~B. and Glynn, P.~W.} (2000).
\newblock Simulating the maximum of a random walk.
\newblock {\em J. Statist. Plann. Inference\/} {\bf 85,} 127--135.

\bibitem{MR2587562}
{\sc Fill, J.~A. and Huber, M.~L.} (2010).
\newblock Perfect simulation of {V}ervaat perpetuities.
\newblock {\em Electron. J. Probab.\/} {\bf 15,} no. 4, 96--109.

\bibitem{Jorge_GitHub}
{\sc Gonz\'alez~C\'azares, J., Mijatovi\'c, A. and Uribe~Bravo, G.}
\newblock Code for the simulation of the stable supremum.
\newblock \url{https://github.com/jorgeignaciogc/SupStable.jl} 2018.
\newblock GitHub repository.

\bibitem{MR3443710}
{\sc Huber, M.~L.} (2016).
\newblock {\em Perfect simulation} vol.~148 of {\em Monographs on Statistics
  and Applied Probability}.
\newblock CRC Press, Boca Raton, FL.

\bibitem{MR1788098}
{\sc Kendall, W.~S. and M{\o}ller, J.} (2000).
\newblock Perfect simulation using dominating processes on ordered spaces, with
  application to locally stable point processes.
\newblock {\em Adv. in Appl. Probab.\/} {\bf 32,} 844--865.

\bibitem{MR2789582}
{\sc Kuznetsov, A.} (2011).
\newblock On extrema of stable processes.
\newblock {\em Ann. Probab.\/} {\bf 39,} 1027--1060.

\bibitem{MR3005012}
{\sc Kuznetsov, A.} (2013).
\newblock On the density of the supremum of a stable process.
\newblock {\em Stochastic Process. Appl.\/} {\bf 123,} 986--1003.

\bibitem{MR2509253}
{\sc Meyn, S. and Tweedie, R.~L.} (2009).
\newblock {\em Markov chains and stochastic stability} second~ed.
\newblock Cambridge University Press, Cambridge.

\bibitem{MR3033593}
{\sc Michna, Z.} (2013).
\newblock Explicit formula for the supremum distribution of a spectrally
  negative stable process.
\newblock {\em Electron. Commun. Probab.\/} {\bf 18,} no. 10, 6.

\bibitem{MR1650023}
{\sc Murdoch, D.~J. and Green, P.~J.} (1998).
\newblock Exact sampling from a continuous state space.
\newblock {\em Scand. J. Statist.\/} {\bf 25,} 483--502.

\bibitem{MR2978134}
{\sc Pitman, J. and Uribe~Bravo, G.} (2012).
\newblock The convex minorant of a {L}\'evy process.
\newblock {\em Ann. Probab.\/} {\bf 40,} 1636--1674.

\bibitem{MR758799}
{\sc Revuz, D.} (1984).
\newblock {\em Markov chains} second~ed. vol.~11 of {\em North-Holland
  Mathematical Library}.
\newblock North-Holland Publishing Co., Amsterdam.

\bibitem{MR2453779}
{\sc Song, R. and Vondra\v{c}ek, Z.} (2008).
\newblock On suprema of {L}\'evy processes and application in risk theory.
\newblock {\em Ann. Inst. Henri Poincar\'e Probab. Stat.\/} {\bf 44,} 977--986.

\bibitem{MR1745764}
{\sc Uchaikin, V.~V. and Zolotarev, V.~M.} (1999).
\newblock {\em Chance and stability}.
\newblock Modern Probability and Statistics. VSP, Utrecht.

\end{thebibliography}

\end{document}